\newlist{myenumi}{enumerate}{1}
\setlist[myenumi,1]{label=\upshape(\roman*)}
\newlist{myenuma}{enumerate}{1}
\setlist[myenuma,1]{label=\upshape(\alph*)}
\declaretheorem[name=Theorem, numberwithin=section]{theorem}
\declaretheorem[name=Theorem, numbered=no]{theorem*}
\declaretheorem[name=Lemma,numberlike=theorem]{lemma}
\declaretheorem[name=Lemma,numbered=no]{lemma*}
\declaretheorem[name=Corollary,numberlike=theorem]{cor}
\declaretheorem[name=Proposition,numberlike=theorem]{prop}
\declaretheorem[name=Definition,numberlike=theorem, style=definition]{definition}
\declaretheorem[name=Conditions, numberlike=theorem, style=definition]{Conditions}
\declaretheorem[name=Example, numberlike=theorem, style=remark]{example}
\declaretheorem[name=Remark, numberlike=theorem, style=remark]{rem}
\declaretheorem[name=Theorem]{thmx}
\declaretheorem[name=Corollary, numberlike=thmx]{corx}
\numberwithin{equation}{section}
\crefname{figure}{Figure}{Figures}
\crefname{table}{Table}{Tables}
\crefname{theorem}{Theorem}{Theorems}
\crefname{thmx}{Theorem}{Theorems}
\crefname{lemma}{Lemma}{Lemmas}
\crefname{definition}{Definition}{Definitions}
\crefname{setup}{Setup}{Setups}
\crefname{conjecture}{Conjecture}{Conjectures}
\crefname{question}{Question}{Questions}
\crefname{cor}{Corollary}{Corollaries}
\crefname{corx}{Corollary}{Corollaries}
\crefname{prop}{Proposition}{Propositions}
\crefname{example}{Example}{Examples}
\crefname{rem}{Remark}{Remarks}
\crefname{section}{Section}{Sections}
\crefname{subsection}{Subsection}{Subsections}
\crefname{chapter}{Chapter}{Chapters}
\crefname{appendix}{Appendix}{Appendices}
    \let\Cref\crtCref
    \let\cref\crtcref
\newcommand{\vphi}{\varphi}
\newcommand{\Hom}{\operatorname{Hom}}
\newcommand{\Hol}{{\operatorname{Hol}}}
\newcommand{\Riem}{\mathrm{Riem}}
\newcommand{\trace}{\mathrm{tr}}
\newcommand{\pb}{\mathsf{pb}}  %% pull-back action
\newcommand{\std}{\mathrm{std}} %% abbreviation for Standaard
\newcommand{\FinMetric}[2]{\langle\!\langle  #1,\, #2\rangle\!\rangle}
\newcommand{\EH}{{EH}}
\newcommand{\gEH}{g_{\EH}} %% Eguchi Hanson metric
\newcommand{\gEHpar}[1]{g_{\EH,{#1}}} %% Eguchi Hanson metric with parameter
\newcommand{\vertical}{\mathrm{vert}}
\newcommand{\pr}{\mathrm{pr}}
\newcommand{\GTwostd}{\varphi^{\mathrm{std}}}
\title[Non-aspherical Moduli Spaces]{Path Components of $\GTwo$-Moduli Spaces may be non-aspherical}
\subjclass[2020]{55Q52, 57R20 (Primary); 53C29, 57R20, 58D27 (Secondary)}
\author{Diarmuid Crowley}
\address[D.~Crowley]{School of Mathematics and Statistics, The University of Melbourne, Australia}
\email{\href{mailto:dcrowley.unimelb.edu.au}{dcrowley@unimelb.edu.au}}
\urladdr{\href{https://dcrowley.net}{dcrowley.net}}
\author{Sebastian Goette}
\address[S.~Goette]{Mathematisches Institut, Albert-Ludwigs Universit\"at Freiburg, Germany}
\email{\href{mailto:sebastian.goette@math.uni-freiburg.de}{sebastian.goette@math.uni-freiburg.de}}
\urladdr{\href{https://home.mathematik.uni-freiburg.de/goette/}{https://home.mathematik.uni-freiburg.de/goette/}}
\author{Thorsten Hertl}
\address[T.~Hertl]{School of Mathematics and Statistics, The University of Melbourne, Australia }
\email{\href{mailto:thorsten.hertl@unimelb.edu.au}{thorsten.hertl@unimelb.edu.au}}
\urladdr{\href{https://thorsten-hertl.github.io/}{https://thorsten-hertl.github.io/}}
\date{\today}
\begin{document}

\begin{abstract}
  Starting from Joyce's generalised Kummer construction, we exhibit non-trivial families of $\mathrm{G}_2$-manifolds over the two dimensional sphere by resolving singularities with a twisted family of Eguchi-Hanson spaces. 
  We establish that the comparison map $\mathcal{G}_2^{\mathrm{tf}}(M) /\!\!/ \mathrm{Diff}(M)_0 \rightarrow \mathcal{G}_2^{\mathrm{tf}}(M) / \mathrm{Diff}(M)_0$ is a fibration over each path component with Eilenberg Mac Lane spaces as fibres, which allows us to show that these families remain non-trivial in $\mathcal{G}_2^{\mathrm{tf}}(M) / \mathrm{Diff}(M)_0$. 
  In addition, we construct a new invariant based on characteristic classes that allows us to show that different resolutions give rise to different elements in the moduli space.
\end{abstract}

\maketitle

%\tableofcontents

\section{Introduction}\label{Section - Introduction}

In the article \cite{Joyce1996CompactG2I}, Dominic Joyce not only presented the first example of a closed manifold with holonomy group $\GTwo$,
but also provided a structural result regarding the deformation behaviour of $\GTwo$-holonomy metrics on any closed, seven-dimensional manifold.
Joyce showed that the \emph{moduli space}, which he defines as the quotient $\Moduli{M}$ of all parallel or \emph{torsion-free} $\GTwo$-structures by the group of diffeomorphisms isotopic to the identity, is a smooth manifold of dimension $b_3(M)$ by proving that the tautological map $\Moduli{M} \rightarrow H^3(M;\R)$ that sends an equivalence class of a parallel $\GTwo$-structure to its cohomology class is a local diffeomorphism.

This theorem is local in nature, and results about the global topological properties of $\Moduli{M}$ remain scarce.
Indeed, so far, it is only known by work of \cite{crowley2025analytic} and \cite{wallis2018disconnecting} that certain $\GTwo$-manifolds have disconnected moduli spaces; the question whether all path components are contractible has remained open.

The first main result of this article shows that the moduli space of Joyce's first example, the one presented in \cite{Joyce1996CompactG2I}, has a non-contractible path component.

\begin{thmx}\label{Main Theorem - Example}
  If $(M, \varphi_{\mathrm{Joyce}})$ denotes Joyce's first example of a closed $\GTwo$-manifold, then the second homotopy group contains at least $3^{12}$ elements: 
  %In particular,
  \begin{equation*}
      \left|\pi_2\left(\Moduli{M},[\varphi_{\mathrm{Joyce}}]\right) \right| \geq 3^{12} = 531\, 441.
  \end{equation*}
\end{thmx}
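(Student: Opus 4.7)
The plan is to exhibit at least $3^{12}$ distinct elements in $\pi_2\bigl(\Moduli{M},[\varphi_{\mathrm{Joyce}}]\bigr)$ by combining the three ingredients announced in the abstract: a twelvefold family construction from twisted Eguchi--Hanson resolutions, the fibration comparing the homotopy and strict quotients, and the new characteristic class invariant.

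\emph{Step 1: Family construction.} Joyce's first example is a resolution of $T^7/\Gamma$ with $\Gamma\cong(\mathbb{Z}/2)^3$, whose singular locus is a disjoint union of twelve three-tori, each having a tubular neighbourhood isometric to $T^3\times(\mathbb{R}^4/\{\pm 1\})$. The resolution glues in twelve copies of $T^3\times\EH$. Since $\EH$ is hyperkähler, its twistor line $\mathbb{CP}^1\cong S^2$ parameterises a continuous family of compatible Kähler structures used in the patching. Performing the gluing along this $S^2$-family at the $i$-th stratum while keeping the other eleven resolutions fixed produces, for $i=1,\dots,12$, a continuous map
\[
\sigma_i\colon S^2 \longrightarrow \mathcal{G}_2^{\mathrm{tf}}(M)/\!\!/\mathrm{Diff}(M)_0,
\]
each based at $\varphi_{\mathrm{Joyce}}$.

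\emph{Step 2: Descent to the moduli space.} Applying the fibration result from the abstract, the projection $\mathcal{G}_2^{\mathrm{tf}}(M)/\!\!/\mathrm{Diff}(M)_0 \to \Moduli{M}$ has Eilenberg--Mac Lane fibre over the component of $[\varphi_{\mathrm{Joyce}}]$. Since $\pi_1(S^2)=0$, the relevant portion of the long exact sequence of homotopy groups implies that each $[\sigma_i]$ projects to a well-defined class $[\overline{\sigma}_i]\in\pi_2\bigl(\Moduli{M},[\varphi_{\mathrm{Joyce}}]\bigr)$, and that non-triviality detected modulo the (abelian) fibre contribution survives the descent.

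\emph{Step 3: Detection and independence.} I would then apply the characteristic class invariant of the abstract to the classes $[\overline{\sigma}_i]$. Since the $i$-th family varies the gluing only in a fixed neighbourhood of the $i$-th singular stratum, the invariant should decompose as a sum of contributions localised at the twelve strata, each factor taking values in $\mathbb{Z}/3$; the matrix of pairings between the $[\overline{\sigma}_i]$ and the twelve component invariants would then be the identity. This yields a surjection from the subgroup $\langle[\overline{\sigma}_1],\dots,[\overline{\sigma}_{12}]\rangle\le\pi_2(\Moduli{M},[\varphi_{\mathrm{Joyce}}])$ onto $(\mathbb{Z}/3)^{12}$, forcing the stated lower bound.

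\emph{Main obstacle.} The delicate step is the construction and analysis of the characteristic class invariant: one must pin down the Pontryagin-type expression (or family-index refinement) on the vertical tangent bundle of the total space of a family, verify that its integration over the twistor $S^2$ reduces to a non-trivial class of order exactly $3$ in the appropriate quotient of $H^*(M;\mathbb{Z})$, and check invariance under the action of $\pi_0(\mathrm{Diff}(M))$ so that the invariant is well defined on $\pi_2(\Moduli{M})$. A subsidiary difficulty is ensuring that the twelve local families remain genuinely independent, i.e.\ that no global symmetry of Joyce's construction permits a non-trivial $\mathbb{Z}/3$-linear combination of the $[\overline{\sigma}_i]$ to be killed either by the descent of Step 2 or by an internal identification within the moduli space.
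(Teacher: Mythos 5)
Your outline tracks the paper's strategy closely: twist the Eguchi--Hanson resolutions over $S^2$ to get $M$-bundles with fibrewise torsion-free $\GTwo$-structures, push into the homotopy moduli space, descend to $\Moduli{M}$ via the orbit comparison fibration (using that $B\mathrm{Stab}$ has trivial $\pi_2$ since the stabiliser is a finite discrete group), and detect the classes with a Pontryagin-class invariant. The construction in the paper is the bundle $\mathcal{EH}\rightarrow\CP^1$ of \cref{Section - EH-Bundle}, with $p_1(T^\vertical\mathcal{EH})=-8xy$, and bundles $E_{M,\mathsf{B}}$ indexed by subsets $\mathsf{B}$ of the twelve trivial-isotropy strata; the evaluation cycles are the zero sections $\iota_b\colon\CP^1\times\CP^1\hookrightarrow E_{M,\mathsf{B}}$ (core sphere times base), not an integration over the twistor $S^2$ alone.

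The genuine gap in your sketch is that you never say where the number $3$ comes from, and without this the invariant you posit is not well-defined. The Pontryagin number $\bigl\langle p_1(T^\vertical E_{M,\mathsf{B}}),\iota_{b\ast}[\CP^1\times\CP^1]\bigr\rangle=-8$ depends on the choice of $4$-cycle lifting a given class in $H_2(M)$: changing the lift by a fibre class $c\in H_4(M)$ changes the value by $\langle p_1(M),c\rangle$. The paper's key input (\cref{Theorem - DivPontrClass}) is that $p_1(M)\in H^4(M;3\Z\subseteq\R)$, proved via Chern--Weil localisation at the Eguchi--Hanson necks and Hirzebruch's signature theorem applied to the $K3$ surface $=$ resolved $T^4/\Z_2$; only modulo $3$ does the ambiguity vanish, and $-8\equiv 1\pmod 3$ is then nonzero. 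Your Step 3 also needs the independence of the twelve $H_2$-classes represented by the core spheres to survive the bundle construction; that is \cref{Lemma - Fibre Inclusion Third Homology}, proved by a Mayer--Vietoris and K\"unneth comparison between $E_M$ and $N\times S^2$. Without the divisibility result and the homological independence lemma, the argument would not close.
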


Besides the challenges of constructing examples of torsion-free $\GTwo$-forms on closed manifolds, a principal obstacle in finding non-trivial elements in homotopy groups of $\GTwo$-moduli spaces stems from its very definition: 
From a categorical perspective, a quotient $\Moduli{M}$ is determined by $\Diff(M)_0$-invariant maps \emph{out} of $\tfGTwoStr(M)$, while elements in the homotopy groups are represented by maps from spheres \emph{into} this quotient, and these maps do not lift to $\tfGTwoStr(M)$ automatically.

To overcome this difficulty, we make use of the \emph{homotopy quotient} $\hModuli{M}$ instead.
The universal property of this topological space relates maps \emph{into} $\hModuli{M}$ to $M$-fibre bundles over the domain with a fibrewise torsion-free $\GTwo$-structure on it, see Section \ref{Section - Comparison} for details.
The homotopy moduli space may be explicitly defined as a quotient $(\tfGTwoStr(M) \times E\Diff(M)_0)/\Diff(M)_0$, where $E\Diff(M)_0$ is a contractible space on which $\Diff(M)_0$ acts freely, and we consider the diagonal action.
The product projections induce two maps
\begin{equation*}
    \xymatrix@R-.5em{ \hModuli{M} \ar[r] \ar[d] & \Moduli{M} \\
     B\Diff(M)_0. &  }
\end{equation*}

A priori, the homotopy moduli space and the actual moduli space might be very different spaces.
However, our second main result, the following comparison theorem, implies that these two spaces carry almost the same information.

\begin{thmx}[Orbit Comparison Theorem]\label{Main Theorem - Comparison Result}
    If $\tfGTwoStr(M)_\varphi$ denotes the path component of $\varphi$ in $\tfGTwoStr(M)$ and if $\mathrm{Stab}_\varphi(\Diff(M)_0)$ denotes the subgroup of elements of~$\Diff(M)_0$ that preserve~$\varphi$, then the canonical comparison map
    \begin{equation*}
        p\colon \tfGTwoStr(M)_\varphi/\!\!/\Diff(M)_0 \rightarrow \tfGTwoStr(M)_\varphi/\Diff(M)_0 
    \end{equation*}
   is a fibration with fibre $B\mathrm{Stab}_{\varphi}(\Diff(M)_0).$
\end{thmx}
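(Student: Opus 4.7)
The plan is to identify $p$ as a Borel construction associated to the $\Diff(M)_0$-action on $\tfGTwoStr(M)_\varphi$ and to apply a slice theorem to obtain local trivializations, from which both the fibration property and the identification of the fibre follow.

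Write $G := \Diff(M)_0$, $X := \tfGTwoStr(M)_\varphi$ and, for each $\varphi' \in X$, let $H_{\varphi'} := \mathrm{Stab}_{\varphi'}(G)$. By construction $X /\!\!/ G = (X \times EG)/G$ with the diagonal $G$-action, and $p$ is induced by passing to quotients in the $G$-equivariant projection $X \times EG \to X$. A direct set-theoretic computation shows that the preimage under $p$ of $[\varphi']$ is
\[
(G \cdot \varphi' \times EG)/G \;\cong\; (G/H_{\varphi'} \times EG)/G \;\cong\; EG/H_{\varphi'} \;=\; BH_{\varphi'},
\]
so the fibres of $p$ already have the expected form at the level of sets; what remains is to upgrade this to a fibration and to verify the fibre identification up to homotopy.

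For this I would invoke a slice theorem: at each $\varphi' \in X$ one needs an $H_{\varphi'}$-invariant local transversal $S_{\varphi'} \ni \varphi'$ for which the natural map $G \times_{H_{\varphi'}} S_{\varphi'} \to X$ is a $G$-equivariant homeomorphism onto an open tube around $G \cdot \varphi'$. Such a slice can be produced either by adapting Ebin's slice theorem for the action of $\Diff(M)$ on Riemannian metrics, exploiting that a torsion-free $\GTwo$-structure is equivalent to a Ricci-flat metric together with the discrete datum of a parallel spinor, or by combining Joyce's deformation theorem (which identifies a neighbourhood of $[\varphi']$ in $\Moduli{M}$ with an open subset of $H^3(M;\R)$) with an $H_{\varphi'}$-equivariant refinement. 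Over the neighbourhood $S_{\varphi'}/H_{\varphi'} \subset X/G$ the map $p$ is then identified with the Borel-type map $S_{\varphi'}/\!\!/H_{\varphi'} \to S_{\varphi'}/H_{\varphi'}$; since $H_{\varphi'}$ is a compact Lie group (the identity component of the isometry group of $g_{\varphi'}$ that lies in $\Diff(M)_0$) acting on a finite-dimensional slice, this is a fibration with fibre $BH_{\varphi'}$. Gluing the local models yields that $p$ is a Serre fibration. To conclude that the fibre is constant within the path component of $[\varphi]$, one uses that stabilizers of points in a slice are subconjugate to $H_{\varphi'}$ together with a continuity argument along paths in $X/G$.

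The main obstacle will be establishing the slice theorem in this infinite-dimensional setting. Ebin's framework handles unconstrained metrics, but the torsion-free condition is a PDE constraint and the reduction to $\GTwo$-structures must be tracked carefully; one must also verify that $\mathrm{Stab}_\varphi(\Diff(M)_0)$ is a well-behaved compact Lie subgroup—here the subtle distinction between stabilizers in $\Diff(M)$ and in $\Diff(M)_0$ matters—and that it acts smoothly on the slice. The constancy of stabilizer type throughout a path component of $X/G$, needed for the fibre to be globally $B\mathrm{Stab}_\varphi(\Diff(M)_0)$, is a further point where the manifold structure of Joyce's moduli space should be exploited.
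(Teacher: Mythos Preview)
Your strategy is the same as the paper's: reduce to a general statement about group actions admitting slices, invoke a slice theorem for the $\Diff(M)_0$-action on $\tfGTwoStr(M)$, and then read off the local model. The overall architecture is correct, but there is a real gap in the local step.

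You assert that $S_{\varphi'}/\!\!/H_{\varphi'} \to S_{\varphi'}/H_{\varphi'}$ is a fibration with fibre $BH_{\varphi'}$ ``since $H_{\varphi'}$ is a compact Lie group \dots\ acting on a finite-dimensional slice''. This does not follow. For a general slice, points $s \in S_{\varphi'}$ have stabilisers that are only \emph{subconjugate} to $H_{\varphi'}$, so the fibres of that map can genuinely vary (and the quotient $S_{\varphi'}/H_{\varphi'}$ need not even be a manifold). Your later ``continuity argument along paths'' does not rescue this: upper semicontinuity of stabilisers is compatible with jumps. What the paper actually uses is the stronger fact that $H_{\varphi'}$ acts \emph{trivially} on the slice---in the paper's terminology, the slice is \emph{constant}. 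This is special to the $\GTwo$ situation and is precisely where Joyce's argument enters: an element of $\mathrm{Stab}_{\varphi'}(\Diff(M)_0)$ is an isometry of $g_{\varphi'}$ isotopic to the identity, hence acts trivially on $H^3(M;\R)$, and since Joyce's map from the slice to $H^3(M;\R)$ is an equivariant local diffeomorphism, the action on the slice is trivial. Once you have this, the local picture is literally $B H_{\varphi'} \times S_{\varphi'} \to S_{\varphi'}$, which gives the fibration property and the constancy of the fibre in one stroke---no separate argument is needed.

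A minor point: your description of $H_{\varphi'}$ as ``the identity component of the isometry group of $g_{\varphi'}$ that lies in $\Diff(M)_0$'' is off. It is the full stabiliser of $\varphi'$ inside $\Diff(M)_0$, which in the full-holonomy case is a finite group (Lemma~\ref{Lemma - finite isometry}), not an identity component of anything.
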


Since $\mathrm{Stab}_\varphi(\Diff(M)_0)$ is a finite group if the holonomy group of the induced Riemannian metric $g_\varphi$ is the entire group $\GTwo$, we have the following consequence for the homotopy groups of these spaces.

\begin{corx}\label{Main Corollary - Coconnectivity}
   If $M$ is a manifold that carries a Riemannian metric whose holonomy group is $\GTwo$, then the canonical comparison map $p\colon \tfGTwoStr(M)/\!\!/\Diff(M)_0 \rightarrow \tfGTwoStr(M)/\Diff(M)_0$ is $2$-coconnected, which means its induced homomorphism between the $k$-th homotopy groups is injective if $k=2$ and bijective if $k \geq 3$.
\end{corx}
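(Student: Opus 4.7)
The plan is to invoke Theorem B (the Orbit Comparison Theorem) and then analyse the fibre it produces. Fix a basepoint $\varphi \in \tfGTwoStr(M)$ lying in a path component whose induced Riemannian metric $g_\varphi$ has full holonomy $\GTwo$; such a $\varphi$ exists by hypothesis. Theorem B then realises the restriction of $p$ to the path component of $[\varphi]$ as a fibration with fibre $B\mathrm{Stab}_\varphi(\Diff(M)_0)$, so the corollary reduces to understanding the homotopy type of this fibre.

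The key step will be to show that $G := \mathrm{Stab}_\varphi(\Diff(M)_0)$ is a \emph{finite} group. Any diffeomorphism that preserves the $\GTwo$-form $\varphi$ also preserves the induced metric $g_\varphi$, so $G$ embeds as a closed subgroup of the compact Lie group $\mathrm{Isom}(M, g_\varphi)$. The Lie algebra of the latter consists of Killing vector fields on $(M, g_\varphi)$. Since metrics with holonomy contained in $\GTwo$ are Ricci-flat, Bochner's theorem forces every Killing field on the closed manifold $M$ to be parallel. The full-holonomy assumption enters here: as $\GTwo \subset \mathrm{SO}(7)$ acts irreducibly on $\R^7$ without nonzero fixed vectors, there are no nonzero parallel vector fields on $M$. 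Consequently $\mathrm{Isom}(M, g_\varphi)$ has trivial Lie algebra, is $0$-dimensional and, being compact, finite; hence so is its subgroup $G$.

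Once finiteness of $G$ is in hand, the conclusion follows mechanically. Because $G$ is discrete, $BG$ is an Eilenberg--Mac Lane space $K(G, 1)$, so $\pi_k(BG) = 0$ for every $k \geq 2$. The long exact sequence of the fibration from Theorem B then reads
\begin{equation*}
\cdots \to \pi_k(BG) \to \pi_k\bigl(\tfGTwoStr(M)_\varphi /\!\!/ \Diff(M)_0\bigr) \xrightarrow{p_*} \pi_k\bigl(\tfGTwoStr(M)_\varphi / \Diff(M)_0\bigr) \to \pi_{k-1}(BG) \to \cdots
\end{equation*}
For $k \geq 3$, both $\pi_k(BG)$ and $\pi_{k-1}(BG)$ vanish, so $p_*$ is an isomorphism; for $k = 2$ only $\pi_2(BG)$ vanishes, giving injectivity.

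The main (though not deep) obstacle is the finiteness of the stabiliser, which is precisely where the full-holonomy hypothesis is used: having holonomy only \emph{contained in} $\GTwo$ is insufficient, as a flat torus demonstrates that weaker hypotheses allow continuous isometries, hence positive-dimensional stabilisers, and the fibre $BG$ would no longer be $2$-connected.
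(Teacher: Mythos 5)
Your argument is correct and matches the paper's proof in all essentials: the paper likewise reduces to Theorem~\ref{Main Theorem - Comparison Result}, proves the stabiliser is finite via a Bochner argument for Killing fields on a Ricci-flat closed manifold (their Lemma~\ref{Lemma - finite isometry}), and reads off the claim from the long exact sequence of a fibration whose fibre $BG$ is a $K(G,1)$ for discrete $G$. One small gap worth closing: the corollary asserts $2$-coconnectivity of $p$ on the whole domain, so you need finiteness of $\mathrm{Stab}_\varphi(\Diff(M)_0)$ for \emph{every} torsion-free $\varphi$, not just the one you fix; this holds because the existence of a single full-holonomy metric forces $\pi_1(M)$ to be finite, and for closed $\GTwo$-manifolds $\pi_1(M)$ finite is equivalent to full holonomy for \emph{all} induced metrics, so your argument applies verbatim on every path component.
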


The important consequence of the Orbit Comparison Theorem and Corollary \ref{Main Corollary - Coconnectivity} is that it allows us to construct non-trivial elements in higher homotopy groups that can be detected using metric-independent methods from differential topology.  
In fact, the Orbit Comparison Theorem is a consequence of general fact about special group actions, see Theorem \ref{Theorem - HQuoVsQuo}, so there are corresponding statements for the moduli space of torsion-free $\Spin(7)$-structures and the moduli space for torsion-free Calabi-Yau structures. %%Joyce p. 143

We expect that the Orbit Comparison Theorem will be useful in constructing elements in the homotopy groups of the moduli spaces of torsion-free $\GTwo$-structures on closed manifolds. 
To our knowledge, all examples of closed $\GTwo$-manifolds were explicitly constructed, using various recipes, and these constructions often involve choices. 
As an illustration, let us sketch how we use Orbit Comparison Theorem to prove Theorem~\ref{Main Theorem - Example}:

We start with the construction of bundles of $\GTwo$-manifolds whose fibre is Joyce's first example $M$ by resolving each singularity of the flat orbifold $T^7/\Gamma$ from which $M$ arises with either a twisted family of Eguchi-Hanson spaces $\mathcal{EH}$ over $S^2$ or with an untwisted family $EH \times S^2$.
Using the first Pontryagin class of the vertical tangent bundle on $E$ and the fact that the singularity set of this flat orbifold has $12$ connected components, we are able to prove the following result.

\begin{thmx}\label{Main Theorem - pi_2-HMS}
%There is a surjective homomorphism  $\pi_2\left(\hModuli{M},[\varphi_{\mathrm{Joyce}}]\right) \geq H \to \Z_3^{12}$ defined on a subgroup $H$.
%In particular, 
The group $\pi_2\left(\hModuli{M},[\varphi_{\mathrm{Joyce}}]\right)$ has at least $531\, 441 = 3^{12}$ elements.    
\end{thmx}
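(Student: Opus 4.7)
The plan is to exhibit $3^{12}$ explicit $M$-bundles over $S^2$, each carrying a fibrewise torsion-free $\GTwo$-structure, and to detect them as distinct classes in $\pi_2(\hModuli{M})$ via the first Pontryagin class of the vertical tangent bundle.

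To construct the families, I would upgrade Joyce's resolution of $T^7/\Gamma$ fibrewise over $S^2$. Recall that $M$ is built by gluing an Eguchi-Hanson model $EH \times T^3$ onto each of the $12$ singular $T^3$-strata and perturbing the resulting closed $\GTwo$-form to a torsion-free one. To work in families, I replace each local model independently by either the trivial family $EH \times S^2 \times T^3$ or by a twisted family $\mathcal{EH} \times T^3 \to S^2$ arising from the natural isometric symmetry of $EH$, with a twist parameter taking at least three distinct values. Independent choices at each of the $12$ strata then yield a parameter set of size at least $3^{12}$. The key analytic input is that Joyce's perturbation argument can be made uniform over the compact base $S^2$, so that for each parameter tuple $\vec k=(k_1,\dots,k_{12})$ the family $E_{\vec k}\to S^2$ inherits a fibrewise torsion-free $\GTwo$-structure, and hence, by the universal property of $\hModuli{M}$ recalled in \cref{Section - Comparison}, a homotopy class $[E_{\vec k}] \in \pi_2(\hModuli{M}, [\varphi_{\mathrm{Joyce}}])$.

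To separate these classes I would use the first Pontryagin class $p_1(T_v E_{\vec k}) \in H^4(E_{\vec k};\mathbb{Z})$. Since $E_{\vec k}$ differs from the trivial $M$-bundle only in a neighbourhood of each exceptional stratum, $p_1$ localises on the $12$ resolution regions and may be paired against $12$ disjointly supported $4$-cycles to produce integer invariants $\mathcal I_1,\dots,\mathcal I_{12}$. A local computation within a single twisted Eguchi-Hanson family shows that $\mathcal I_i$ depends only on $k_i$ and assumes at least three distinct values as $k_i$ varies, so the $12$-tuple $(\mathcal I_1,\dots,\mathcal I_{12})$ attains at least $3^{12}$ distinct values. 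This forces the classes $[E_{\vec k}]$ to be pairwise distinct in $\pi_2(\hModuli{M},[\varphi_{\mathrm{Joyce}}])$, giving the stated lower bound.

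The main obstacle, I expect, is the analytic content of the family construction: ensuring Joyce's gluing-and-perturbation procedure goes through continuously as a function of $\vec k$ over the compact base $S^2$, with uniform estimates. Once this is secured, the characteristic-class computation reduces to a local calculation near each exceptional set, and the $\pi_2$-invariance of the localised Pontryagin integrals follows from the naturality of $p_1(T_v)$ for $M$-bundles.
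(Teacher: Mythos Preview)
Your overall shape is close to the paper's, but the source of the number $3$ is misidentified, and this is a genuine gap. In the paper there are only \emph{two} choices at each of the $12$ strata---resolve with the twisted family $\mathcal{EH}$ or with the untwisted one $EH\times S^2$---yielding $2^{12}$ explicitly constructed bundles $E_{M,\mathsf{B}}$, one for each subset $\mathsf{B}\subseteq\mathsf{A}$. There is no third twist value in the construction. The $3$ instead comes from the following two ingredients, neither of which appears in your sketch: first, a divisibility result (\cref{Theorem - DivPontrClass}) showing that $p_1(M)\in H^4(M;3\Z\subseteq\R)$, proved via Hirzebruch's signature theorem on $K3$; second, a spectral-sequence argument (\cref{prop - Cohomology Invariant}) showing that for bundles whose fibre inclusion is injective on $H_3$, the assignment $E\mapsto \bigl(p_1(T^{\mathrm{vert}}E)\cap\,\placeholder\bigr)$ descends to a \emph{group homomorphism} $\mathcal{S}\to\mathrm{Hom}(H_2(M;\Z),\Z_3)$. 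Since the pairing of $p_1(T^{\mathrm{vert}}E_{M,\{b\}})$ with the $b$-th core $\CP^1\times\CP^1$ is $-8\equiv 1\pmod 3$, the $12$ single-stratum bundles hit the standard generators of $\Z_3^{12}$; the homomorphism property then forces the image to be all of $\Z_3^{12}$, and hence $\pi_2(\hModuli{M})$ surjects onto a group of order $3^{12}$.

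Your proposal to pair $p_1(T_vE)$ against ``$12$ disjointly supported $4$-cycles'' also hides a real issue: those cycles live in the total space $E$, which varies, so you must explain why the resulting integers are invariants of the class in $\pi_2$ rather than of the particular bundle and cycle. The paper resolves this by using the Leray--Serre filtration to identify the relevant $4$-cycles canonically with $H_2(S^2;H_2(M;\Z))\cong H_2(M;\Z)$, which is intrinsic; this is also what makes additivity under fibrewise connected sum checkable. Without the divisibility-by-$3$ step and the homomorphism structure, your argument at best distinguishes $2^{12}$ classes, not $3^{12}$.
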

%Since the singularity set of this flat orbifold has $12$ connected components, there are $3^{12}$ many choices of resolutions and each of them will give us a map $S^2 \rightarrow \hModuli{M}$, as explained above.

%Using the first Pontryagin class of the vertical tangent bundle on $E$, we can show that different choices of resolutions give rise to pair wise different bundles $E \rightarrow S^2$, so their classifying maps generate pair wise different elements in $\pi_2(B\Diff(M)_0)$ and, a fortiori, a pair wise different elements in $\pi_2(\hModuli{M})$. 

%Theorem \ref{Main Theorem - Comparison Result} now implies that its image in $\pi_2(\Moduli{M})$ contains at least $3^{12}$ elements.

\noindent
The Orbit Comparison Theorem now implies that the image of $\pi_2(\hModuli{M})$ in $\pi_2(\Moduli{M})$ also contains at least $3^{12}$ elements.

\vspace{6pt}

To get a better intuition of what happens geometrically, let us compare our main result with the description of hyperkähler structures on $K3$ surfaces.
Let~$q$ be a nondegenerate, even, unimodular quadratic form
of signature~$(3,19)$ on~$\Lambda\cong\Z^{22} \cong H^2(K3,\Z)$,
then~$(\Lambda,q)$ is unique up to isomorphism.
Let~$\Lambda_\R\cong\Lambda\otimes_\Z\R$, and let~$\langle \placeholder, \placeholder \rangle$
denote the indefinite scalar product induced by~$q$.
Consider
\begin{align*}
  \Kall&=\bigl\{\,(\alpha_1,\alpha_2,\alpha_3)\in\Lambda_\R^3\bigm|
  \langle\alpha_i,\alpha_j\rangle=a\,\delta_{ij}\text{ for some }a>0\,\bigr\}\;,\\
  \Ksing&=\bigcup_{\lambda\in\Lambda,q(\lambda)=-2}
  \bigl\{\,(\alpha_1,\alpha_2,\alpha_3)\in\Lambda_\R^3\bigm|
  \langle\alpha_1,\lambda\rangle=\langle\alpha_2,\lambda\rangle=\langle\alpha_3,\lambda\rangle=0\,\bigr\}\;,
\end{align*}
%%%% Joyce's orientation convention for Kähler manifolds: \omega^m = m! dVol_g. This is why a the inner product requires a positive constant.
then the hyperk\"ahler moduli space of a $K3$ surface is diffeomorphic
to~$\Khk\cong\Kall\setminus\Ksing$,
see~\cite[Theorem~7.3.16]{Joyce2000SpecialHolonomy}.
Note that~$\Kall\cong O(3,19)/O(19)\times\R^+$,
and~$\Ksing\cap\Kall$ is the union of subspaces of codimension~$3$
of the form~$O(3,18)/O(18)\times\R^+$.
If we are only interested in the space~$\Kmod$ of Riemannian metrics
induced by the hyperk\"ahler structures,
we divide by the action of~$O(3)$ on $3$-frames in~$\Lambda_\R$,
obtaining the symmetric space~$O(3,19)/(O(3)\times O(19))\times\R^+$
of noncompact type.
%%%% This can be deduced from the Cartan decomposition of symmetric spaces, see Helgason §V.1 together with Theorem V.3.1.
%%% The reason is that for any G invariant Riemannian metric on G/K, with K maximal compact, the section curvature of the plane S is given by
%\begin{equation*}
  % K(S) = \sum_{i=1}^n \beta_i^{-1} B([X_i,Y_i],[X_i,Y_i]),
%end{equation*}
%where X_i and Y_i are elements of the eigenspace of Q_\sharp \circ B^\flat, the Gram-Matrix of B with respect to Q, of the eigenvalue \beta_i. Since K is maximal compact, the Killing form is negative definite on the complement of Lie(K) and the Eigenvalues are all positive in this case (cf Helgason Proof of Theorem V.3.1.), which implies the claim.
Now, the singular set is a union of totally geodesic subspaces
of codimension~$3$.
Hence, each of these subspaces introduces nontrivial elements
of~$\pi_2(\Kmod)$ and~$\pi_2(\Khk)$ in their complement.

Geometrically, the Poincar\'e dual of each cohomology class $\lambda \in \Lambda$ with $q(\lambda)=-2$ can be realised by a unique minimal $\CP^1$ with self-intersection $-2$. 
The volume of this minimal submanifold is given by $\sum_{j=1}^3 \langle \alpha_i,\lambda\rangle^2$, so its volume collapses as one approaches the corresponding component of the singular set.
In the Kummer construction, one resolves the~$16$ singularities of $T^4/\Z_2$ by Eguchi-Hanson spaces.
Each of these contains one copy of $\CP^1$ with self-intersection~$-2$, and these degenerate if the neck-size of the Eguchi-Hanson space approaches zero.

In our examples, the geometric situation is similar.
The non-trivial elements we find in $\pi_2(\Moduli{M})$ locally wind around
``holes'' in~$\Moduli{M}$ of codimension~$3$ where a $T^3$-family
of~$\C P^1$s collapses.
%Again, the degenerating $C P^1$s are the necks of Eguchi-Hanson spaces
%used for the resolution.
It is also possible to describe nontrivial homology classes~$a\in H_3(M)$ such that~$\langle\varphi,a\rangle \rightarrow 0$ if~$(M,\varphi)$ approaches the singularity.
In~\cite{Dwivedi2023Associative}, Dwivedi, Platt and Walpuski have constructed
associative submanifolds that are contained within a small neighbourhood
of the neck of a single $T^3$-family of Eguchi-Hanson spaces inside~$M$.
Let~$a\in H_3(M)$ denote the homology class they represent.
Then~$\langle\varphi,a\rangle$ describes their volume, which approaches~$0$ as the $\GTwo$-structure becomes singular.
There are similar constructions of coassociative submanifolds and $\GTwo$-instantons that degenerate together with the $\GTwo$-structure
in~\cite{Gutwein2024Coassociative}.

However, the above is insufficient to detect non-trivial elements of $\pi_2(\Moduli{M})$. 
First of all, those associative submanifolds are only guaranteed to exist over a certain subset of the moduli space and, secondly, the tautological map $\Moduli{M} \rightarrow H^3(M)$ is only a local diffeomorphism. 
So, even if one can detect a nontrivial~$S^2$ family inside~$\Moduli{M}$ close to a singularity using associative or coassociative submanifolds, there is no obvious argument that prevents this family from being globally trivial.
We therefore follow a different approach.

\vspace{6pt}

%At this point, we should also mention that 
We now digress briefly to discuss
the full quotient $\tfGTwoStr(M)/\Diff(M)$, which is a priori only an orbifold,
and it's homotopy orbit space $\tfGTwoStr(M)/\!\!/\Diff(M)$.
%, is also considered sometimes. Indeed, 
The latter space is of independent interest, since it is the classifying space for smooth fibre bundles with fibre a torsion free $\GTwo$-manifold.
The results of \cite{crowley2025analytic} and \cite{wallis2018disconnecting} 
%actually 
detect different path components in the full quotient $\tfGTwoStr(M)/\Diff(M)$ and not just the moduli space,
%so it is worthwhile to describe their connection and to which extent our results remain valid for 
and in that case the natural map 
$\pi_0 \tfGTwoStr(M)/\!\!/\Diff(M) \to \pi_0 \tfGTwoStr(M)/\Diff(M)$ is a bijection.
%$\tfGTwoStr(M)/\Diff(M)$ and its homotopy theoretic approximation $\tfGTwoStr(M)/\!\!/\Diff(M)$:
There are canonical comparison maps between the involved objects that 
%assemble to 
form the commutative diagram
\begin{equation*}
    \xymatrix{ \hModuli{M} \ar[rr] \ar[d]_{\mathrm{h}\kappa} && \Moduli{M} \ar[d]^\kappa \\
    \tfGTwoStr(M)/\!\!/\Diff(M) \ar[rr] && \tfGTwoStr(M)/\Diff(M). }
\end{equation*}
The map $\mathrm{h}\kappa$ is a fibration with fibre $B\Gamma$, the classifying space of the \emph{mapping class group} $\Gamma := \Diff(M)/\Diff(M)_0$.
As $\Gamma$ is discrete, all higher homotopy groups of $B\Gamma$ vanish and hence $\pi_2(\mathrm{h}\kappa)$ is an injective homomorphism. 
In particular, Theorem \ref{Main Theorem - pi_2-HMS} remains true for the full homotopy quotient. %$\tfGTwoStr(M)/\!\!/\Diff(M)$.

\begin{corx}\label{Corollary - pi_2-FHMS}
%There is a surjective homomorphism  $\pi_2\left(\hModuli{M},[\varphi_{\mathrm{Joyce}}]\right) \geq H \to \Z_3^{12}$ defined on a subgroup $H$.
%In particular, 
The group $\pi_2\left(\tfGTwoStr(M)/\!\!/\Diff(M),[\varphi_{\mathrm{Joyce}}]\right)$ has at least $531\, 441 = 3^{12}$ elements.    
\end{corx}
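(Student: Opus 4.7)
The plan is to combine the injectivity of $\pi_2(\mathrm{h}\kappa)$ with the lower bound already obtained in Theorem~\ref{Main Theorem - pi_2-HMS}. The key input, supplied in the preceding paragraph of the excerpt, is that the comparison map
\[
  \mathrm{h}\kappa \colon \hModuli{M} \longrightarrow \tfGTwoStr(M)/\!\!/\Diff(M)
\]
is a fibration with fibre $B\Gamma$, where $\Gamma := \Diff(M)/\Diff(M)_0$ is the mapping class group. Given this, the corollary is essentially formal.

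First I would write down the homotopy long exact sequence of this fibration at the basepoint $[\varphi_{\mathrm{Joyce}}]$:
\[
   \pi_2(B\Gamma) \longrightarrow \pi_2(\hModuli{M}) \xrightarrow{\pi_2(\mathrm{h}\kappa)} \pi_2\bigl(\tfGTwoStr(M)/\!\!/\Diff(M)\bigr) \longrightarrow \pi_1(B\Gamma).
\]
Since $\Gamma$ is discrete, $B\Gamma$ is a $K(\Gamma,1)$, so $\pi_n(B\Gamma) = 0$ for every $n \geq 2$. Exactness at $\pi_2(\hModuli{M})$ therefore forces $\pi_2(\mathrm{h}\kappa)$ to be injective.

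Next I would invoke Theorem~\ref{Main Theorem - pi_2-HMS}, which guarantees $|\pi_2(\hModuli{M},[\varphi_{\mathrm{Joyce}}])| \geq 3^{12}$. By the injectivity just established, the image of $\pi_2(\mathrm{h}\kappa)$ inside $\pi_2(\tfGTwoStr(M)/\!\!/\Diff(M),[\varphi_{\mathrm{Joyce}}])$ also contains at least $3^{12}$ elements, yielding the desired bound.

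There is no substantial obstacle in this argument: all of the geometric work sits inside Theorem~\ref{Main Theorem - pi_2-HMS}, and the rest is a one-line long exact sequence argument using that $B\Gamma$ is aspherical. The only point one might wish to double-check is the asserted fibration property of $\mathrm{h}\kappa$, which, in the spirit of the Orbit Comparison Theorem, should follow from a general fact about homotopy quotients: enlarging the acting group from $\Diff(M)_0$ to $\Diff(M)$ replaces point fibres by classifying spaces of the enlargement $\Gamma$.
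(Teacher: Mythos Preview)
Your proposal is correct and follows exactly the same approach as the paper: the paragraph immediately preceding the corollary already spells out the argument you give, namely that $\mathrm{h}\kappa$ is a fibration with fibre $B\Gamma$, that $\Gamma$ being discrete forces $\pi_2(B\Gamma)=0$, and hence $\pi_2(\mathrm{h}\kappa)$ is injective, so Theorem~\ref{Main Theorem - pi_2-HMS} carries over verbatim.
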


In general, we expect that neither the map $\kappa$ is a fibration, nor that our Orbit Comparison Theorem remains true for the map $\tfGTwoStr(M)/\!\!/\Diff(M) \rightarrow \tfGTwoStr(M)/\Diff(M)$ because the action of $\Diff(M)$ on $\tfGTwoStr(M)$ may have fixpoints, see Remark \ref{Remark - comparison failure fix points} below for further details.
In particular, we cannot guarantee that the elements in Theorem \ref{Main Theorem - Example} survive under $\pi_2(\kappa)$.
A definitive answer would involve a thorough understanding of the fixpoint set of the pullback action of $\Gamma$ on $\Moduli{M}$, which we do not have at the moment.

\vspace{6pt}

All singularities we consider in this article can be resolved with Eguchi-Hanson spaces and quotients of it.
It is an interesting question whether other crepant resolutions of ADE-singularities may allow us to construct non-trivial elements in homotopy groups beyond degree $2$, and we plan to address this question a in future paper.

Finally, we would like to remark that the non-trivial bundle $\mathcal{EH}$ of Eguchi-Hanson spaces was used by Jiafeng Lin in \cite{lin2022family} in an entirely different context.

\vspace{10pt}
\noindent
\textbf{Outline of the Paper:}
In Section \ref{Section-Foundations}, we recall the necessary background of $\GTwo$-geometry. 
We will then prove the Orbit Comparison Theorem (Theorem \ref{Main Theorem - Comparison Result}) in Section \ref{Section - Comparison} in a fairly general setup.
Section \ref{Section - EH-Bundle} outlines two different (but equivalent) constructions of a bundle of Eguchi-Hanson spaces.
We will mostly use the second definition (\ref{Eq: Eguchi-Hanson family Version II}) in the remaining text, in particular, we will use it in Section~\ref{Section - Bundle Resolution} to construct bundles of $\GTwo$-manifolds with fibrewise $\GTwo$-structures on them.
As a preparation, we recall in Section~\ref{Section - GeneralisedKummer} Joyce's generalised Kummer construction in a specific setup and prove a divisibility result of their first Pontryagin classes.
Finally, in Section~\ref{Section - Resolution Detection} we prove that our constructed examples give rise to non-trivial in the (homotopy) moduli spaces, which in particular, implies Theorem \ref{Main Theorem - Example} and \ref{Main Theorem - pi_2-HMS}.
Furthermore, in Table \ref{fig - Table Examples}, we list the analogous results for the other examples of Joyce presented \cite{Joyce1996CompactG2II}.

In Appendix \ref{Appendix - Slice Theorem}, we recall properties of the actions of diffeomorphisms on the space of $\GTwo$-structures and outline a proof of Joyce's slice theorem in a formulation that is more suitable for our needs.

\vspace{10pt}
\noindent
\textbf{Acknowledgments:} 
D.~Crowley is partially supported by the Australian Research Council Discovery Project DP DP220102163.
S.~Goette's work was funded by the Simons Foundation, Grant Number 488617.
T.~Hertl's work was funded by the Simons Foundation, Grant Number 488617, and by the Australian Research Council Discovery Project DP DP220102163.
We would like to thank Johannes Nordström and Markus Upmeier for inspiring discussions. Furthermore, T.~Hertl would like to thank MATRIX for their hospitality during the workshop "The Geometry of Moduli Spaces in String Theory" in September 2024, where part of this work was completed.

\section{Preliminaries}\label{Section-Foundations}

We will recollect the basic facts about the group $\GTwo$ and manifold with $\GTwo$-holonomy metrics we need.
Nothing presented in this section is original and further details may be found in \cite{Bryant2006Remarks},  \cite{Joyce2000SpecialHolonomy}, \cite{Joyce2007CalibratedGeometry}, and \cite{G2Lectures}, although some of our conventions may differ from these sources.

Write $x^\alpha$ for the standard coordinates of $\R^7$ and abbreviate $\diff x^i\wedge \diff x^j \wedge \dots \wedge \diff x^l$ to $\diff \mathbf{x}^{ij\dots l}$.
Our standard model\footnote{This form corresponds to the one considered in \cite{Bryant1987ExceptionalHolonomy} and \cite{Joyce2000SpecialHolonomy} under the coordinate change $x_1 \mapsto -x_1$, $x_2\mapsto -x_2$.} for a $\GTwo$-structure is the following three form on $\R^7$:
\begin{equation*}
    \GTwostd = \diff \mathbf{x}^{123} - \diff \mathbf{x}^{145} - \diff \mathbf{x}^{167} - \diff \mathbf{x}^{246} + \diff \mathbf{x}^{257} - \diff \mathbf{x}^{347} - \diff \mathbf{x}^{356}. 
\end{equation*}
Under the decomposition $\R^7 = \R^3 \oplus \Quat$ and $\R^7 = \R \oplus \C^3$ this 3-forms splits  into 
\begin{equation}\label{eq - G2FormSplitting}
    \GTwostd = \vol_{\R^3} + \sum_{\alpha = 1}^3 \diff x^\alpha \wedge \omega_\alpha^{\mathrm{std}} = \mathrm{Re}\, \Omega^{\std} + \diff t \wedge \omega^{\std},
\end{equation}
where 
\begin{align}\label{eq - hyper Kaehler forms std}% the negative ones of Joyce's choice in his book p.272 
     \omega_1^{\mathrm{std}} &= \diff y^1 \wedge \diff y^0 + \diff y^3 \wedge \diff y^2, &\omega_2^{\mathrm{std}} = \diff y^2 \wedge \diff y^0 - \diff y^3 \wedge \diff y^1, \\ 
    \omega_3^{\mathrm{std}} &= \diff y^3 \wedge \diff y^0 + \diff y^2 \wedge \diff y^1, \nonumber
\end{align} %%% if the number should be more central, use the split neighbourhood instead
under the identification $y^\alpha = x^{\alpha + 4}$, and
\begin{equation}\label{eq - SU forms std}
    \omega^{\std} = \frac{\iu}{2}\sum_{\alpha=1}^3 \diff z^\alpha \wedge \diff \Bar{z}^\alpha, \qquad \qquad \Omega^{\std} = \diff z^1 \wedge \diff z^2 \wedge \diff z^3
\end{equation}
under the identification $z^\alpha = x^{\alpha} + \iu x^{\alpha + 4}$ and $t = x^4$.
For later, we recall that
\begin{align}
    \ast_{\GTwostd} \GTwostd &= \frac{1}{2} \omega_1^{\mathrm{std}} \wedge \omega_1^{\mathrm{std}} + \omega_1^{\mathrm{std}} \wedge \diff x_2 \wedge \diff x_3 + \omega_2^{\mathrm{std}} \wedge \diff x_3 \wedge \diff x_1 + \omega_3^\mathrm{std} \wedge \diff x_1 \wedge \diff x_2 \\
    &= \diff\mathbf{x}^{4567} - \diff \mathbf{x}^{2367} - \diff \mathbf{x}^{2345} - \diff \mathbf{x}^{1357} + \diff \mathbf{x}^{1346} - \diff \mathbf{x}^{1256} - \diff \mathbf{x}^{1247}. \nonumber
\end{align}

We define the group $\GTwo$ to be the stabiliser of $\GTwostd$ inside $\GL_7(\R)$:
\begin{equation*}
    \GTwo := \left\{ A \in \GL_7(\R) \,:\, A^\ast \GTwostd = \GTwostd \right\}.
\end{equation*}
From the splitting (\ref{eq - G2FormSplitting}), we see that $\Sp(1)$ and $\SU(3)$ are subgroups of $\GTwo$.

The elements of the orbit $\Lambda^{3,+}\R^{7,\vee} = \GL_7(\R)\cdot \GTwostd \cong \GL_7(\R)/\GTwo$ are called \emph{positive 3-forms}.
One can prove by direct computation that 
\begin{equation*}
    6\langle v,w\rangle_{\std} \vol_{\R^7} = (\insertion{v} \GTwostd) \wedge (\insertion{w}\GTwostd) \wedge \GTwostd =: g_{\GTwostd}(v,w)\vol_{\GTwostd}
\end{equation*}
and that this formula is equivariant with respect to the pullback action of $\GL_7(\R)$ on 3-forms and symmetric bilinear forms.
Hence, $\GTwo \leq \SOrth(7)$ and we get an equivariant (non-linear) map
\begin{equation}\label{eq - form to metric}
    \Lambda^{3,+}\R^{7,\vee} \rightarrow S^{2,+}(\R^{7,\vee}), \qquad \vphi \mapsto g_\varphi,
\end{equation} 
which can be identified with the canonical projection $\GL_7(\R)/\GTwo \rightarrow \GL_7(\R)/\SOrth(7)$.

The space of all $\GTwo$-structures on $M$, denoted by
\begin{equation*}
    \GTwoStr(M) = \Gamma(\Lambda^{3,+}T^\vee M) \subseteq \Omega^3(M),
\end{equation*}
is an open subspace of $\Omega^3(M)$ endowed with the smooth Fr\'{e}chet topology.
This space is non-empty if and only if $M$ has a spin structure, that is, if $M$ is orientable and its second Stiefel-Whitney class $w_2(M) \in H^2(M;\Z_2)$ vanishes.
In this case, a choice of $\varphi \in \GTwoStr(M)$ determines a unique spin structure on $M$.
The map (\ref{eq - form to metric}) induces a smooth map $\GTwoStr(M) \rightarrow \Riem(M)$ into the Fr\'{e}chet space of all Riemannian metrics on $M$.
It is surjective, whenever the domain is non-empty.

If $\nabla^{\varphi}$ denotes the Levi-Civita connection of $g_\varphi$, then we call $T(\varphi) := \nabla^\varphi \varphi$ the \emph{torsion tensor} of $\varphi$.
It is proved in \cite[Lemma 11.5]{Salamon1989RedBook} that
\begin{equation*}
    T(\varphi) = 0 \quad \Longleftrightarrow \quad \diff \varphi = 0 \ \text{ and } \  \diff^{\ast_\varphi}\varphi =0, 
\end{equation*}
where $\ast_\varphi$ is the Hodge-$\ast$ operator of the underlying metric $g_\varphi$ and $\diff^{\ast_\varphi}$ is the codifferential with respect to the metric $g_\varphi$, which acts on $k$-forms by $\diff^{\ast_\varphi} = (-1)^k \ast_\varphi^{-1}\diff\,\ast_{\varphi}$.
If $\nabla^\varphi \varphi = 0$, then $\mathrm{Hol}(g_\varphi) \leq \GTwo$, and conversely, by the holonomy principle, if $\mathrm{Hol}(g) \leq \GTwo$, then there is a torsion-free $3$-form $\varphi$ such that $g_\varphi = g$.
In this case, we call $(M,g)$ (often just $M$) a \emph{$\GTwo$-manifold}.
A Riemannian metric with $\Hol(g) \leq \GTwo$ is always Ricci-flat: $\mathrm{Ric}(g)=0$, see \cite[Proposition 11.8]{Salamon1989RedBook}.

For a closed, connected $\GTwo$-manifold, the condition to have full holonomy is a purely topological one, namely $\mathrm{Hol}(g) = \GTwo$ if and only if $\pi_1(M)$ is finite \cite[Proposition 10.2.2]{Joyce2000SpecialHolonomy}.
Thus, in this case, if
\begin{equation*}
    \tfGTwoStr(M) := \{ \varphi \in \GTwoStr(M) \, : \, \diff \varphi = \diff^{\ast_\varphi}\varphi = 0 \} 
\end{equation*}
denotes the space of all torsion-free $\GTwo$-structures and if $\Riem^{\GTwo}(M)$ is the space of Riemannian metric whose holonomy group is $\GTwo$, then we have the following commutative diagram 
\begin{equation}\label{eq - torsionfree G2hol}
    \xymatrix{ \tfGTwoStr(M) \ar@{^{(}->}[r] \ar[d]_{2 : 1}^{\varphi \mapsto g_\varphi} & \GTwoStr(M) \ar[d]^{\varphi \mapsto g_{\varphi}} \\
    \Riem^{\GTwo}(M) \ar[r] & \Riem(M), }
\end{equation}
in which the elements of the fibre of the left vertical map are distinguished by the orientation they induce on $M$.
%% If we do not restrict to those three forms that yield the underlying orientation, we would only have a 2:1 cover.

We close this section with a lemma that is folklore, but we failed to find a proof in the literature.

\begin{lemma}\label{Lemma - finite isometry}
    If $(M,g)$ is a closed Riemannian manifold with $\Hol(g) = \GTwo$, then its isometry group $\Gamma$ is finite. 
\end{lemma}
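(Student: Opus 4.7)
The plan is to reduce the problem to showing that the identity component of the isometry group is trivial, and then invoke a Bochner-type argument together with the holonomy principle.

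First, by the Myers--Steenrod theorem, the isometry group $\Gamma$ of any closed Riemannian manifold is a compact Lie group. Hence $\Gamma$ is finite if and only if its identity component $\Gamma_0$ is trivial, which in turn is equivalent to the vanishing of its Lie algebra $\mathfrak{g} = \operatorname{Lie}(\Gamma_0)$. This Lie algebra is canonically identified with the space of Killing vector fields on $(M,g)$.

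Next I would use that $\Hol(g) = \GTwo$ implies that $g$ is Ricci-flat, since any metric with holonomy in $\GTwo$ is Ricci-flat (as already noted in the text following the discussion of torsion-free $\GTwo$-structures). On a closed Ricci-flat Riemannian manifold, Bochner's theorem asserts that every Killing vector field $X$ satisfies $\nabla X = 0$, i.e.\ is parallel. Thus it is enough to show that $(M,g)$ admits no non-zero parallel vector field.

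Finally, I would apply the holonomy principle: parallel sections of the tangent bundle are in bijective correspondence, via evaluation at a chosen basepoint $p \in M$, with vectors in $T_pM \cong \R^7$ fixed by the holonomy representation of $\Hol(g) = \GTwo$. Since $\GTwo$ acts transitively on the unit sphere $S^6 \subset \R^7$ (its standard irreducible representation), the only $\GTwo$-invariant vector in $\R^7$ is the zero vector. Consequently there are no non-zero parallel vector fields, so $\mathfrak{g} = 0$, and $\Gamma$ is a compact zero-dimensional Lie group, hence finite. No serious obstacle is anticipated here; the only subtlety is to cite the correct form of Bochner's identity for Killing fields on compact Ricci-flat manifolds.
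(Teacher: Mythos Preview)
Your proposal is correct and follows essentially the same route as the paper: reduce to the non-existence of a non-trivial Killing field, use Ricci-flatness and a Bochner argument to conclude any Killing field is parallel, and then invoke the holonomy principle. The only cosmetic difference is in the last step, where the paper phrases the contradiction as $\Hol(g)\le\mathrm{Stab}_{\GTwo}(X(p))\cong\mathrm{SU}(3)$ rather than appealing to the transitivity of $\GTwo$ on $S^6$; these are equivalent observations.
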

\begin{proof}
   The isometry group of a closed Riemannian manifold is a compact Lie group. %\cite{Myers1939IsometryGroup}, and if the underlying manifold is closed, then the group is compact. %%% reference for compact Lie group available?
   Thus, if $\Gamma$ were not finite, we would find a non-trivial infinitesimal isometry, in other words, a Killing vector field $X \neq 0$.
   The following Bochner formula for vector fields derived in \cite{Yano1952HarmonicandKillingVF},
   \begin{equation*}
       \int_M \mathrm{Ric}(X,X) + \mathrm{tr}\bigl((\nabla_{(\placeholder)}X)^2\bigr) + {\mathrm{tr}(\nabla_{(\placeholder)}X)^2} \diff \vol = 0
   \end{equation*}
   together with $\mathrm{tr}\bigl((\nabla_{(\placeholder)}X)^2\bigr) = - \trace\left( \nabla_{(\placeholder)}X^\ast \nabla_{(\placeholder)}X\right) \leq 0$ and $\mathrm{Ric}(X,X) = 0$ implies that $X$ is a parallel vector field.
   By the holonomy principle, $\Hol_p(g) \leq \mathrm{Stab}_{\GTwo}(X(p)) \cong \mathrm{SU}(3)$ for some $p \in M$ (and hence for each point in the path component of $p$), which is a contradiction. 
\end{proof}
%\section{The Comparison Theorem}\label{Section - Comparison}

\section{Comparing the homotopy moduli space and the moduli space}\label{Section - Comparison}

The aim of this section is to compare the homotopy quotient $\hModuli{M}$ to the actual quotient $\Moduli{M}$ and to give a proof of Orbit Comparison Theorem (Theorem \ref{Main Theorem - Comparison Result}).
It will turn out that this Orbit Comparison Theorem is a consequence of a general principle of continuous action of topological groups on toplogical spaces, so it is worthwhile to establish the machinery in full generality and then apply it to $\GTwo$-geometry. 

Assume that a topological group $G$ acts on a topological space $X$, not necessarily freely.
We can turn this action into a free one without changing the homotopy type of $X$ as follows:
Let $G \rightarrow EG \rightarrow BG$ be the \emph{universal} $G$-principal bundle over the \emph{classifying space} as constructed, for example, by Milnor \cite{Milnor1956UniversalII}, see also \cite[Chapter 14.4]{tomDieck2008algebraic} for a very clear account.
It is a numerable\footnote{A bundle is called \emph{numerable} if the base space has an open covering of trivialising domains and a partition of unity subordinated to that cover.} $G$-principal bundle with contractible total space.
We define the \emph{homotopy quotient} of the action $G \curvearrowright X$ to be the Borel construction
\begin{equation*}
    X/\!\!/G := EG \times_G X = (EG \times X)/G,
\end{equation*}
where $G$ acts diagonally on the product.
The homotopy quotient is again the total space of a numerable fibre bundle $X \rightarrow X/\!\!/G \xrightarrow{\proj_1}BG$, so it induces a long exact sequence of homotopy groups.
By \cite[Proposition 7.9]{may1975classifying} this sequence of maps can be extended by the orbit map to a sequence of quasi-fibrations.
This observation immediately implies the next result.

\begin{lemma}\label{Lemma - Boundary Equals Orbit}
   Under the identification $\pi_n(BG) \cong \pi_{n-1}(\Omega B G) \cong \pi_{n-1}(G)$ via fibre transportation, the boundary operators in the induced long exact sequence of homotopy groups agree with the homomorphism induced by the orbit map. 
   In other words, we have the following commutative diagram
   \begin{equation*}
       \xymatrix{\pi_{n-1}(G,e) \ar@{=}[r] \ar@/_1pc/[rr]_-{[g] \mapsto [g_\bullet x_0]} & \pi_n(BG,\ast) \ar[r]^-{\partial_n}  & \pi_{n-1}(X,x_0). }
   \end{equation*}
\end{lemma}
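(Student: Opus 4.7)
The plan is to derive the identity from the functoriality of connecting homomorphisms under the principal-bundle / associated-bundle relation. Concretely, I will show that for any principal $G$-bundle $P \to B$ and any pointed $G$-space $(X, x_0)$, the boundary operators in the long exact sequences of $G \to P \to B$ and $X \to P \times_G X \to B$ satisfy
\begin{equation*}
\partial_n^{P \times_G X} = o_\ast \circ \partial_n^P
\end{equation*}
as maps $\pi_n(B, *) \to \pi_{n-1}(X, x_0)$, where $o \colon G \to X$, $o(g) = g \cdot x_0$ is the orbit map. The lemma then follows by applying this to the universal bundle $P = EG$: by contractibility of $EG$, the boundary $\partial_n^{EG}$ is an isomorphism, and it is exactly the fibre-transport isomorphism $\pi_n(BG) \cong \pi_{n-1}(G)$ appearing in the statement.

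To verify the relation, represent $[\alpha] \in \pi_n(B,*)$ by a map $\alpha \colon (D^n, \partial D^n, d_0) \to (B, *, *)$. Because $D^n$ is contractible, the pulled-back bundle $\alpha^{\ast} P \to D^n$ is trivial, and I can pick a lift $\tilde{\alpha} \colon (D^n, d_0) \to (P, p_0)$ of $\alpha$ landing at a chosen basepoint $p_0$ of the fibre over $*$. On $\partial D^n$, the map $\tilde{\alpha}$ lands in the $G$-orbit of $p_0$, so I may write $\tilde{\alpha}(d) = p_0 \cdot c(d)$ for a unique clutching map $c \colon (\partial D^n, d_0) \to (G, e)$. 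By construction $[c]$ represents $\partial_n^P([\alpha]) \in \pi_{n-1}(G, e)$.

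Applying the same procedure to the associated bundle, the assignment $d \mapsto [\tilde{\alpha}(d), x_0]$ lifts $\alpha$ through $P \times_G X \to B$ and lands at $[p_0, x_0]$, which corresponds to $x_0 \in X$ under the canonical identification of the fibre over $*$ with $X$ via $x \mapsto [p_0, x]$. On $\partial D^n$ the Borel relation gives
\begin{equation*}
[\tilde{\alpha}(d), x_0] = [p_0 \cdot c(d), x_0] = [p_0, c(d) \cdot x_0],
\end{equation*}
which equals $c(d) \cdot x_0 = o(c(d))$ under the fibre identification. By the definition of $\partial_n^{P \times_G X}$ this represents $\partial_n^{P \times_G X}([\alpha])$, and comparing with the previous paragraph yields $\partial_n^{P \times_G X}([\alpha]) = o_\ast([c]) = o_\ast(\partial_n^P([\alpha]))$, as desired.

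The main obstacle is essentially bookkeeping: one must consistently track basepoints in $EG$ and in the fibres, and verify that the clutching-type description of $\partial_n^{EG}$ really does agree with the fibre-transport isomorphism of the statement. Both points are standard. In particular, no use of the quasi-fibration structure of the orbit map from \cite{may1975classifying} is needed for this lemma, since $X/\!\!/G \to BG$ is already a genuine numerable fibre bundle with fibre $X$; the quasi-fibration result is only invoked to make sense of the extended sequence on the level of the orbit map itself.
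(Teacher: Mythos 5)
Your argument is correct, and it takes a genuinely different route from the paper. The paper's own justification is a one-line appeal to \cite[Proposition 7.9]{may1975classifying}: the sequence $G \to X \to X/\!\!/G \to BG$ extends to a sequence of quasi-fibrations, and the compatibility of the two boundary operators (plus the identification of the map $G \to X$ in May's sequence with the orbit map) is read off directly from that. You instead give a self-contained clutching-function argument: you lift a representative disc through the principal bundle, extract the transition function $c \colon S^{n-1} \to G$ on the boundary, and observe that the same lift, pushed to the associated bundle $P \times_G X$ via $d \mapsto [\tilde\alpha(d), x_0]$, has boundary value $[p_0, c(d)\cdot x_0]$, which is precisely $o \circ c$ under the canonical fibre identification. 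Specializing $P = EG$ and using that $\partial_n^{EG}$ \emph{is} the fibre-transport isomorphism (since $EG$ is contractible) finishes the proof. Each approach has its virtue: the paper's citation is shorter and also buys the existence of the extended quasi-fibration sequence, which is implicitly used later; your argument is more elementary, avoids quasi-fibrations entirely, and makes the role of the orbit map transparent by direct computation. The bookkeeping you flag — action side, choice of basepoint $p_0$, and the identification of $\partial^{EG}_n$ with fibre transport — is indeed the only place where care is needed, and you handle it consistently with the paper's conventions (right $G$-action on $EG$, left $G$-action on $X$, so that $[p_0 g, x] = [p_0, g x]$).
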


Recall that $BG$ classifies the homotopy-invariant contravariant functor $\mathsf{Top} \rightarrow \mathsf{Set}$ that assigns to a topological space the set of isomorphism classes of numerable $G$-principal bundles over that space.

In the same spirit, $X/\!\!/G$ also classifies a homotopy invariant functor: If $f \colon B \rightarrow X/\!\!/G$ is a continuous map, we denote the composition of $f$ with $\proj_1\colon X/\!\!/G \rightarrow BG$ by $\mathrm{bs}(f)$.
Essentially by definition, $f$ induces a section into the pullback bundle $\mathrm{bs}(f)^\ast X/\!\!/G \rightarrow B$, which we denote with $\sigma_f$. 
\begin{prop}\label{Prop - Classifying Space}
    Let $B$ be a topological space. 
    The map that assigns to a homotopy class $[f] \in [B,X/\!\!/G]$ the homotopy class $[\sigma_f]$ yields a bijection
    \begin{equation*}
        [B,X/\!\!/G] \rightarrow \bigsqcup_{[P]} \pi_0\left( \Gamma(P \times_G X) \right)\bigr/\sim_{\mathrm{Iso}},
    \end{equation*}
    where $[P]$ runs over set of all isomorphism classes of numerable $G$-principal bundles over $B$ and two homotopy classes of sections satisfy $[\sigma_0] \sim_{\mathrm{Iso}} [\sigma_1]$ if and only if there is a bundle automorphism of $P$ that maps a representative of $[\sigma_0]$ to a representative of $[\sigma_1]$.
\end{prop}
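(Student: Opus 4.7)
The plan is to interpret the projection $X/\!\!/G=EG\times_G X\to BG$ as the fibre bundle with fibre $X$ associated to the universal $G$-principal bundle, and to read off the statement from the classifying property of $EG$. Concretely, a map $B\to X/\!\!/G$ is equivalent to the data of a classifying map $f_P\colon B\to BG$ together with a section of the pullback bundle $f_P^*(X/\!\!/G)=(f_P^*EG)\times_G X=P_f\times_G X$, and the quotient by $\sim_{\mathrm{Iso}}$ on the right compensates for the non-uniqueness of the identification $P\cong\mathrm{bs}(f)^*EG$.

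I would first verify that $[f]\mapsto([P_f],[\sigma_f])$ is well-defined. A homotopy $H\colon B\times I\to X/\!\!/G$ pulls back to a numerable principal bundle $\mathrm{bs}(H)^*EG$ on $B\times I$ which, by the covering homotopy theorem for numerable bundles \cite[Chapter 14.3]{tomDieck2008algebraic}, is isomorphic in the $I$-direction to $\pr_B^*P_{f_0}$. The endpoint of any such trivialisation gives an isomorphism $\psi\colon P_{f_0}\to P_{f_1}$, and the canonical section of $\mathrm{bs}(H)^*(X/\!\!/G)$ induced by $H$ becomes a path of sections of $P_{f_0}\times_G X$ from $\sigma_{f_0}$ to $\psi^*\sigma_{f_1}$. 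Two choices of trivialisation differ by a gauge transformation, so the resulting class is well-defined under $\sim_{\mathrm{Iso}}$.

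Surjectivity is then essentially tautological: given $(P,\sigma)$, pick a classifying map $f_P\colon B\to BG$ and any isomorphism $P\cong f_P^*EG$, transport $\sigma$ to a section of $f_P^*(X/\!\!/G)$, and by the universal property of the pullback this is a lift $f\colon B\to X/\!\!/G$ of $f_P$ with $[\sigma_f]=[\sigma]$ by construction. For injectivity, suppose $[(P_{f_0},\sigma_{f_0})]\sim_{\mathrm{Iso}}[(P_{f_1},\sigma_{f_1})]$. After adjusting the representatives I may assume $\mathrm{bs}(f_0)=\mathrm{bs}(f_1)=f_P$, $P_{f_0}=P_{f_1}=P$, and $\sigma_{f_1}=\alpha_*\sigma_{f_0}$ up to section-homotopy for some $\alpha\in\Aut(P)$. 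A section-homotopy of sections of $f_P^*(X/\!\!/G)$ corresponds to a fibrewise homotopy in $X/\!\!/G$ keeping $\mathrm{bs}$ fixed, so it only remains to realise the gauge transformation $\alpha$ itself by a homotopy $f_0\simeq f_1$. For this I would use that the canonical classifying map $\phi\colon P\to EG$ and its composite $\phi\circ\alpha$ are $G$-equivariantly homotopic because $EG$ is contractible, and such a homotopy descends to a loop at $f_P$ in $BG$ which, combined with the section $\sigma_{f_0}$ (transported along the $G$-equivariant homotopy), yields the desired homotopy in $X/\!\!/G$.

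The main obstacle is the bookkeeping of gauge transformations in both directions: a homotopy determines the induced isomorphism of pullback bundles only up to the gauge group, and conversely every gauge transformation must be realised by a suitable homotopy on the base. Both reductions rely essentially on the contractibility of $EG$ together with the covering and homotopy lifting properties of numerable $G$-bundles, and the quotient by $\sim_{\mathrm{Iso}}$ is precisely the ambiguity that makes the correspondence bijective.
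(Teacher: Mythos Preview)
Your argument is correct and essentially complete; the bookkeeping of gauge transformations that you flag as the main obstacle is handled adequately, and the step where you ``adjust representatives'' so that $\mathrm{bs}(f_0)=\mathrm{bs}(f_1)$ is justified by the homotopy lifting property of the fibration $X/\!\!/G\to BG$ once you have a homotopy of base maps coming from the bundle isomorphism.

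The paper takes a slicker but less explicit route: rather than verifying well-definedness, surjectivity, and injectivity separately, it invokes a single general fact about fibrations (citing \cite[Section~5.6]{tomDieck2008algebraic}): two maps $f_0,f_1\colon B\to E$ into the total space of a fibration $p\colon E\to Y$ are homotopic if and only if there is a homotopy $h$ from $pf_0$ to $pf_1$ such that the fibre transport $h^\sharp(f_0)$ is fibre-homotopic to $f_1$. Applied to $X/\!\!/G\to BG$, this statement packages your entire argument into one line: the different choices of $h$ between two homotopic base maps account precisely for the gauge transformations, so the $\sim_{\mathrm{Iso}}$ quotient drops out automatically from the fibre-transport formulation. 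Your approach has the advantage of being self-contained and making the role of the contractibility of $EG$ explicit; the paper's approach is shorter because the hard work is hidden in the cited lemma.
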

\begin{proof}
    It is fairly easy to see, cf. \cite[{Section 5.6}]{tomDieck2008algebraic}, that two maps $f_1,f_2 \colon B \rightarrow E$ into the total space of a fibration $p \colon E \rightarrow Y$ are homotopic if and only if there is a homotopy $h \colon B \times [0,1] \rightarrow Y$ relating $pf_1$ and $pf_2$ such that the fibre transport $h^\sharp(f_1)$ is fibre-homotopic to $f_2$.
    %Note that even if $pf_1 = pf_2$, the homomotopy $h$ does not need to be the constant one.

    Continuous maps $f \colon B \rightarrow E$ are in one to one correspondence with sections into the pullback fibration $(pf)^\ast E \rightarrow X$, so $f_1,f_2$ are homotopic if and only if $h^\sharp(f_1)$ and $f_2$ are homotopic sections in $(pf_2)^\ast E$.

     We now deduce the stated claim by applying the previous observations to the fibre bundle $X/\!\!/G \rightarrow BG$ and using the fact that numerable fibre bundles are fibrations \cite[Theorem 13.4.1]{tomDieck2008algebraic} and that the pullback of the universal $G$-principal bundle with a homotopy class induces a bijection between $[B,BG]$ and the set of isomorphism classes of numerable $G$-principal bundles over $B$.
     %Note that even if $pf_1 = pf_2$, the homotopy $h$ does not need to be the constant one, and the fibre transport results into the action of a possibly non-trivial bundle automorphism.
\end{proof}

Finally, the projection to the second component induces a comparison map $X/\!\!/G \rightarrow X/G$.
In general, it is hard to determine the connectivity of this comparison map. 
However, in two special cases we can say something about this map.
The first case is classical: If the action $G \curvearrowright X$ we started with is already free, then the comparison map is a weak homotopy equivalence.

The second case, in which we are more interested in, requires the notion of slices.

\begin{definition}\label{Definition - Slice}
    Let $G \curvearrowright X$ be a continuous group action.
    A pair $(S,\Gamma)$ consisting of a subspace $S \subseteq X$ and a subgroup $\Gamma \leq G$ is called a \emph{slice} if the following conditions are satisfied:
    \begin{itemize}
        \item[(SL1)] $S$ is invariant under the restricted action to $\Gamma$, that is $g \cdot S \subseteq S$ for all $g \in \Gamma$.
        \item[(SL2)] If $g \cdot S \cap S \neq \emptyset$, then $g \in \Gamma$.
        \item[(SL3)] $G \rightarrow G/\Gamma$ is a principal $\Gamma$-bundle and there exists a local section $\chi \colon U \rightarrow G$ around the identity coset such that the map
        \begin{equation*}
            \mu \colon U \times S \rightarrow X, \qquad ([u],s) \mapsto \chi([u])\cdot s
        \end{equation*}
        is an open embedding.
    \end{itemize}
    We equivalently write that $S$ is a $\Gamma$-slice if we wish to emphasise the group $\Gamma$.
    A slice $(S,\Gamma)$ is called \emph{constant} if $\Gamma$ acts on $S$ trivially.
\end{definition}

\begin{rem}\label{Remark - slice consequence}
Note that the three axioms imply that the map $S/\Gamma \rightarrow X/G$ induced by the inclusion $S \hookrightarrow X$ is an open embedding.
Note further that if $(S,\Gamma)$ is a constant slice, then $\Gamma$ is the stabiliser of each $s \in S$ by (SL2), hence $S \rightarrow X/G$ is an open embedding.
\end{rem}

The general version of Theorem \ref{Main Theorem - Comparison Result} is the following result that compares the homotopy quotient to the quotient of a group action if each point has a constant slice.

\begin{theorem}\label{Theorem - HQuoVsQuo}
    Let $G \curvearrowright X$ be a continuous group action such that $X/G$ is connected.
    If every $x \in X$ is contained in a constant slice $(S,\mathrm{Stab}_x(G))$, then the conjugacy class of $\Gamma := \mathrm{Stab}_x(G)$ is independent of $x$ and the comparison map $X/\!\!/G \rightarrow X/G$ is a fibre bundle with fibre $B\Gamma$.
\end{theorem}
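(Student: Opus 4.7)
The plan is to upgrade the local constant-slice structure to a global fibre bundle description of $X/\!\!/G \to X/G$ in two steps: first establish that the stabilizer conjugacy class is locally, hence globally, constant on $X/G$; then exhibit explicit local trivializations of the comparison map using the tubes provided by the slices.

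For the conjugacy class independence, fix a constant slice $(S,\Gamma)$ centred at some $x \in X$, so that $\Gamma = \mathrm{Stab}_x(G)$. For any $s \in S$ and any $g \in \mathrm{Stab}_s(G)$ one has $s = g \cdot s \in S \cap g \cdot S$, hence $g \in \Gamma$ by (SL2); conversely every $\gamma \in \Gamma$ fixes $s$ by the constant slice condition. Thus each $s \in S$ satisfies $\mathrm{Stab}_s(G) = \Gamma$ on the nose, so by \cref{Remark - slice consequence} the image of $S$ in $X/G$ is an open neighbourhood of $[x]$ on which the stabilizer conjugacy class is constantly $[\Gamma]$. The map from $X/G$ to the set of conjugacy classes of subgroups of $G$ sending $[y] \mapsto [\mathrm{Stab}_y(G)]$ is therefore locally constant, and connectedness of $X/G$ forces it to be constant.

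To produce the fibre bundle structure, I would first upgrade (SL3) to show that the tube map $\tau \colon G \times_\Gamma S \to X$, $[g,s] \mapsto g \cdot s$, is a $G$-equivariant open embedding onto $G \cdot S$. Injectivity follows from (SL2) combined with the constant slice condition, while the local chart supplied by (SL3) near the identity coset can be translated by any $h \in G$ (using that $G$ acts on $X$ by homeomorphisms and that $G \to G/\Gamma$ is locally trivial) to give a chart near $[h]$. Since $S \hookrightarrow X/G$ is an open embedding by \cref{Remark - slice consequence}, its preimage in $X/\!\!/G$ is
\begin{equation*}
    \bigl( EG \times (G \times_\Gamma S) \bigr)/G,
\end{equation*}
where $G$ acts diagonally. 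The standard Borel identification
\begin{equation*}
    \bigl( EG \times (G \times_\Gamma S) \bigr)/G \;\xrightarrow{\cong}\; EG \times_\Gamma S, \qquad [e,[g,s]] \mapsto [g^{-1}e,\, s],
\end{equation*}
together with triviality of the $\Gamma$-action on $S$ for a constant slice, yields $EG \times_\Gamma S \cong B\Gamma \times S$, and under this identification the comparison map corresponds to the projection onto $S$. This is the desired local trivialization, and by Step 1 every slice realizes the same abstract fibre $B\Gamma$, so these charts assemble into a fibre bundle.

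The main obstacle is the passage from the single chart supplied by (SL3) to the global tube $G \times_\Gamma S \hookrightarrow X$: the slice axioms only control behaviour near the identity coset of $G/\Gamma$, so one must combine the chart with $G$-translation and the local triviality of $G \to G/\Gamma$ to build an honest homeomorphism onto an open subset of $X$. Once the tube description is in place, the remainder of the argument is a formal unwinding of the Borel construction.
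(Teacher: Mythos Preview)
Your proposal is correct and follows essentially the same strategy as the paper: establish that the stabilizer conjugacy class is locally constant via the slices, then use the tube around each slice to build local trivializations of $X/\!\!/G \to X/G$ with fibre $B\Gamma$. The only organizational difference is that you first package (SL3) into a global tube $G \times_\Gamma S \hookrightarrow X$ and then apply the Borel identification $(EG \times (G \times_\Gamma S))/G \cong EG \times_\Gamma S \cong B\Gamma \times S$, whereas the paper writes down the map $\Phi\colon B\Gamma \times S \to EG \times_G V$ directly and checks continuity of its inverse on the translates $EG \times g\,\mu(U \times S)$; these are two phrasings of the same verification.
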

\begin{proof}
    Since $\mathrm{Stab}_{gx}(G) = g\mathrm{Stab}_{x}(G)g^{-1}$, the map that assigns to a point $x \in X$ the conjugacy class of its stabiliser is a $G$-invariant map from $X$ into the set of all conjugacy classes of subgroups of $G$.
    Thus, this map will factor through the quotient $X/G$.

    By Remark \ref{Remark - slice consequence} the map $S \rightarrow X/G$ is an open embedding.
    Since the stabiliser is constant on $S$, it follows that conjugacy class of the stabiliser is a locally constant map, hence constant as $X/G$ is connected.    
    
    Recall that $X/\!\!/G = (EG \times X)/G$.
    Fix a model for $EG$. 
    Since $\Gamma$ acts freely on $EG$, we can use $EG/\Gamma$ as a model for $B\Gamma$.
    %We will prove, using these fixed models, that $S \times B\Gamma$ is homeomorphic to the preimage of $S \subseteq X/\Gamma$ under the the comparison map $X/\!\!/G \rightarrow X/G$.
    We regard the $G$-invariant open subset 
    \begin{equation*}
        V=\bigl\{\,gs\bigm|s\in S,g\in G\,\bigr\}\subset X
    \end{equation*}
    as the preimage of~$S\subset X/G$ under the quotient map.

    The map~$\Phi\colon B\Gamma \times S \to EG \times_G V\subset X/\!\!/G$ that sends~$([e],s)$ for~$s\in S$, $e\in EG$ to~$[(e,s)]$ is well-defined, continuous\footnote{Since $\Gamma$ acts trivially on $S$, the obvious map $(EG \times S)/\Gamma \rightarrow (EG/\Gamma) \times S = B\Gamma \times S$ is a homeomorphism.}, %% because the map $\id \times \kappa S \times EG \rightarrow S \times EG/\Gamma $ is an open map. Quotient maps of topological group actions are always continuous, open, and surjective. Thus, continuity of this map can be calculated by hand by proving that this map is continuous at every point.
    bijective, and compatible with the obvious maps to~$S\subset X/G$.

    By~(SL3), the set~$V$ is covered by open subsets of~$X$ of the form~$g\,\mu(U\times S)$ for~$g\in G$. For each~$g$, the inverse of~$\Phi$ pulls back to~$EG \times g\,\mu(U\times S)$ to a continuous map that sends~$\bigl(e, g\,\chi([u])\cdot s\bigr)$ to~$\bigl(s,[e\cdot g \, \chi([u])]\bigr)$.
    Hence~$\Phi$ is a homeomorphism, hence, a local trivialisation of~$X/\!\!/G\to X/G$.
\end{proof}

Returning to $\GTwo$-geometry: We apply the just established theory to $\GTwo$-manifolds by setting $X = \tfGTwoStr(M)$ or $X = \tfGTwoStr(M)_{\varphi_0}$, where the latter denotes the path component of $\varphi_0$ inside $\tfGTwoStr(M)$, and $G = \Diff(M)_0$.
We call $\hModuli{M}$ the \emph{homotopy moduli space}.

\begin{example}
    To get a better feeling for the homotopy moduli spaces, let us consider the example $B= \{\mathrm{pt}\}$ in Proposition \ref{Prop - Classifying Space}.
    In this special case, we have
    \begin{equation*}
        \pi_0\left(\mathcal{G}_2^{\mathrm{tf}}(M)/\!\!/\Diff(M)_0\right) = \pi_0\left(\mathcal{G}_2^{\mathrm{tf}}(M)\right)/\pi_0(\Diff(M)_0) = \pi_0\left(\mathcal{G}_2^{\mathrm{tf}}(M)\right),
    \end{equation*}
    which of course agrees with $\pi_0\left(\mathcal{G}_2^{\mathrm{tf}}(M)/\Diff(M)_0\right)$ as the pullback action of $\Diff(M)_0$ preserves path components.

    In case of the full homotopy quotient $\tfGTwoStr(M)/\!\!/\Diff(M)$ we get
    \begin{equation*}
        \pi_0\left(\mathcal{G}_2^{\mathrm{tf}}(M)/\!\!/\Diff(M)\right) = \pi_0\left(\mathcal{G}_2^{\mathrm{tf}}(M)\right)/\pi_0(\Diff(M)) = \pi_0\left(\mathcal{G}_2^{\mathrm{tf}}(M)/\Diff(M)\right),
    \end{equation*}
    where the last identity is follows from the path-lifting property of $\tfGTwoStr(M) \rightarrow \tfGTwoStr(M)/\Diff(M)$: 
    From Joyce's slice theorem for the full diffeomorphism group, see the footnote above Theorem \ref{Theorem - Slice Theorem}, we get that every element $[\varphi] \in \tfGTwoStr(M)/\Diff(M)$ can be covered by open sets of the form $S'/\mathrm{Stab}(\varphi)$. 
    For every path $\gamma \colon [0,1] \rightarrow \tfGTwoStr(M)/\Diff(M)$, we find a finite partition $0 = a_0 < \dots < a_n = 1$ and open set $S_i'/\mathrm{Stab}(\varphi)$ such that $\gamma([a_i,a_{i+1}]) \subseteq S_i'/\mathrm{Stab}(\varphi_i)$.
    By \cite[Theorem 6.2]{Bredon1972IntroCptTrafoGrps}, all these restrictions $\gamma|_{[a_i,a_{i+1}]}$ can be lifted to $S_i$, therefore $\gamma$ has a lift to $\tfGTwoStr(M)$.
\end{example}

With the established machinery at hand, the proof of Theorem \ref{Main Theorem - Comparison Result}, the Orbit Comparison Theorem, is now rather straightforward.

\begin{proof}[Proof of Theorem \ref{Main Theorem - Comparison Result}]
   By Corollary \ref{Corollary - Slice Torsionfree}, the pullback action of $\Diff(M)_0$ on $\tfGTwoStr(M)$ has a $\mathrm{Stab}_\varphi(\Diff(M)_0)$-slice $S_\varphi$ for each $\varphi$.
   As explained in the proof of \cite[Theorem 10.4.4]{Joyce2000SpecialHolonomy}, $\mathrm{Stab}_\varphi(\Diff(M)_0)$ acts trivially on $S_\varphi$, so the slices are constant.
   Theorem \ref{Theorem - HQuoVsQuo} now implies the result because fibre bundles over paracompact Hausdorff spaces are fibrations.
\end{proof}

\begin{rem}\label{Remark - comparison failure fix points}
    Our proof of Theorem \ref{Main Theorem - Comparison Result} fails for the analogous comparison map for the full quotients $\tfGTwoStr(M)/\!\!/\Diff(M) \rightarrow \tfGTwoStr(M)/\Diff(M)$, which can already be observed for Joyce's first example $M_{\mathrm{Joyce}}$.
    Essentially, this manifold is constructed from a flat orbifold $T^7/\Gamma$, where $\Gamma \cong \Z_2^3$, by desingularising $12$ connected components of the form $T^3$, by replacing the local neighbourhood of the form $T^3 \times \R^4/\Z_2$ by $T^3 \times T^\vee \CP^1$ with a special metric on it, see Section \ref{Section - GeneralisedKummer} below, \cite{Joyce1996CompactG2I} or \cite[§12.2]{Joyce2000SpecialHolonomy} for details.
    It is not hard to write down elements $A \in \GL_7(\R)$ that descend to $\varphi^{\mathrm{std}}$-preserving diffeomorphisms on $T^7/\Gamma$, which permute the components of the singularity set in a non-trivial manner.

    In the case that all "core spheres" $\CP^1 \subseteq T^\vee \CP^1$ have the same radius, then the element $A$ extends to a diffeomorphism on $M$ that still preserves the torsion-free $\GTwo$-form $\tilde{\varphi}$, but acts non-trivial $H_2(M;\R)$ because the "core spheres" are getting permuted in a non-trivial fashion.
    However, if we slightly perturb the radii such that they are all different, then the (new) resulting torsion-free $\tilde{\varphi}_{\mathrm{new}}$ is not preserved by $A$.
    Hence, $\tilde{\varphi}$ is not contained in a constant slice and the assumptions of Theorem \ref{Theorem - HQuoVsQuo} are not satisfied.
\end{rem}

If the closed $\GTwo$-manifold $M$ has finite fundamental group, then the holonomy group of underlying metric $g_\varphi$ is the entire group $\GTwo$, so by Lemma \ref{Lemma - finite isometry} $\mathrm{Stab}_\varphi(\Diff(M)_0)$ is finite. 
Theorem \ref{Main Theorem - Comparison Result} applied to these manifolds gives the following slight strengthening of Corollary \ref{Main Corollary - Coconnectivity}.

\begin{cor}\label{Theorem - Comparison Theorem Detailed}
    If $M$ is a closed $\GTwo$-manifold with finite fundamental group, then
    the comparison map $p \colon \hModuli{M} \rightarrow \Moduli{M}$ induces on homotopy groups based at $[\varphi]$ the following homomorphisms:
    \begin{itemize}
        \item $\pi_k(p)$ is an isomorphism for all $k \geq 3$,
        \item $\pi_2(p)$ is injective and every element in the cokernel has order at most $|\mathrm{Stab}_{\Diff(M)_0}(\varphi)|$,
        \item $\pi_1(p)$ is surjective and every element in the kernel has order at most $|\mathrm{Stab}_{\Diff(M)_0}(\varphi)|$.
    \end{itemize}
\end{cor}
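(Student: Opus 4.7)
The plan is to deduce this corollary directly from the Orbit Comparison Theorem (Theorem \ref{Main Theorem - Comparison Result}) together with Lemma \ref{Lemma - finite isometry} by inspecting the long exact sequence of homotopy groups of the resulting fibration.

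First I would note that since $M$ has finite fundamental group, \cite[Proposition 10.2.2]{Joyce2000SpecialHolonomy} (recalled in Section \ref{Section-Foundations}) ensures that $\Hol(g_\varphi) = \GTwo$ for every torsion-free $\GTwo$-structure $\varphi$ on $M$. Any $\psi \in \mathrm{Stab}_\varphi(\Diff(M)_0)$ preserves $\varphi$ and hence the induced metric $g_\varphi$, so $\mathrm{Stab}_\varphi(\Diff(M)_0)$ is a subgroup of $\mathrm{Isom}(M,g_\varphi)$; by Lemma \ref{Lemma - finite isometry} this isometry group is finite, and therefore so is $\Gamma := \mathrm{Stab}_\varphi(\Diff(M)_0)$.

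Next, restrict $p$ to the path component of $[\varphi]$ in $\hModuli{M}$. Theorem \ref{Main Theorem - Comparison Result} asserts that this restriction is a fibration with fibre $B\Gamma$. Since $\Gamma$ is discrete, $\pi_1(B\Gamma) \cong \Gamma$ and $\pi_k(B\Gamma) = 0$ for all $k \neq 1$. The long exact sequence of homotopy groups of the fibration $B\Gamma \to \hModuli{M} \to \Moduli{M}$ then reads, in the relevant range,
\begin{equation*}
\cdots \to \pi_{k}(B\Gamma) \to \pi_k(\hModuli{M}) \xrightarrow{\pi_k(p)} \pi_k(\Moduli{M}) \to \pi_{k-1}(B\Gamma) \to \cdots .
\end{equation*}
For $k \geq 3$ both flanking terms vanish, giving $\pi_k(p)$ an isomorphism. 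For $k = 2$ we have $\pi_2(B\Gamma) = 0$ and $\pi_1(B\Gamma) = \Gamma$, so $\pi_2(p)$ is injective with cokernel a subgroup of $\Gamma$; every element of such a subgroup has order dividing $|\Gamma|$. For $k = 1$ we have $\pi_0(B\Gamma) = 0$, so $\pi_1(p)$ is surjective with kernel the image of $\Gamma \to \pi_1(\hModuli{M})$, a quotient of $\Gamma$ whose elements again have order at most $|\Gamma|$.

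There is no real obstacle here beyond checking that the hypotheses of the Orbit Comparison Theorem apply, which is immediate once full holonomy is established; the rest is a mechanical reading of the long exact sequence. The only thing worth being careful about is that the statement concerns the path component of $[\varphi]$, so the exact sequence must be invoked for the restricted fibration and the homotopy groups must be understood as based at $[\varphi]$ throughout.
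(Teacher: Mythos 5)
Your proposal is correct and takes essentially the same route as the paper: finite fundamental group implies full holonomy, hence a finite stabiliser by Lemma \ref{Lemma - finite isometry}, so Theorem \ref{Main Theorem - Comparison Result} gives a fibration with discrete fibre $B\Gamma$, and the claims follow from the long exact sequence of homotopy groups. The paper leaves this final reading of the exact sequence implicit, but it is exactly the computation you carried out.
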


  Since the boundary operator in the long exact sequence of homotopy groups associated to the fibration $\tfGTwoStr(M)_\varphi \rightarrow \tfGTwoStr(M)_\varphi/\!\!/\Diff(M)_0 \rightarrow B\Diff(M)_0$ agrees with the orbit map under the canonical identification, see Lemma \ref{Lemma - Boundary Equals Orbit}, the previous corollary implies the following comparison result.
  % that (almost) answers a question of Diarmuid Crowley.
\begin{cor}
    If $M$ is a closed $\GTwo$-manifold with finite fundamental group, then the canonical homomorphism 
    \begin{equation*}
       \pi_k\left(\tfGTwoStr(M),\varphi\right)/\pi_k(\Diff(M)_0,\id) \rightarrow \pi_k\left(\Moduli{M}, [\varphi]\right)    
    \end{equation*}
    is injective if $k\geq2$.
\end{cor}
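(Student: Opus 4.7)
The plan is to factor the canonical homomorphism through $\pi_k\bigl(\hModuli{M},[\varphi]\bigr)$ and exhibit it as the composition of two injections, one supplied by the long exact sequence of homotopy groups of the defining fibration of the homotopy moduli space, the other by the Orbit Comparison Theorem.

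First, I would apply the long exact sequence to the fibration
\begin{equation*}
\tfGTwoStr(M)_\varphi \longrightarrow \hModuli{M} \longrightarrow B\Diff(M)_0.
\end{equation*}
Since the chosen basepoint lies in $\tfGTwoStr(M)_\varphi$, the groups $\pi_k(\tfGTwoStr(M),\varphi)$ and $\pi_k(\tfGTwoStr(M)_\varphi,\varphi)$ coincide, so the relevant piece of the sequence reads
\begin{equation*}
\pi_{k+1}(B\Diff(M)_0) \xrightarrow{\partial_{k+1}} \pi_k\bigl(\tfGTwoStr(M),\varphi\bigr) \longrightarrow \pi_k\bigl(\hModuli{M},[\varphi]\bigr).
\end{equation*}
By Lemma \ref{Lemma - Boundary Equals Orbit}, under the canonical identification $\pi_{k+1}(B\Diff(M)_0)\cong \pi_k(\Diff(M)_0,\id)$ the boundary operator $\partial_{k+1}$ is precisely the orbit map $g\mapsto g\cdot\varphi$. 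Exactness therefore yields an injection
\begin{equation*}
\pi_k(\tfGTwoStr(M),\varphi)\bigm/\pi_k(\Diff(M)_0,\id) \hookrightarrow \pi_k\bigl(\hModuli{M},[\varphi]\bigr).
\end{equation*}

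Second, I would invoke Corollary \ref{Theorem - Comparison Theorem Detailed}, which asserts that the comparison map $p\colon\hModuli{M}\to\Moduli{M}$ induces an isomorphism on $\pi_k$ for $k\geq 3$ and an injection on $\pi_2$. Composing the two maps gives an injection of $\pi_k(\tfGTwoStr(M),\varphi)/\pi_k(\Diff(M)_0,\id)$ into $\pi_k(\Moduli{M},[\varphi])$ for every $k\geq 2$, which is the desired conclusion.

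The only remaining point is to recognise this composite as the canonical homomorphism appearing in the statement. This is essentially bookkeeping: the canonical map is induced by the quotient $\tfGTwoStr(M)\to\Moduli{M}$, which factors as $\tfGTwoStr(M)\hookrightarrow\hModuli{M}\xrightarrow{p}\Moduli{M}$ by construction of the Borel model for $\hModuli{M}$, and it kills precisely the image of the orbit map, so it descends to a map from $\pi_k(\tfGTwoStr(M),\varphi)/\pi_k(\Diff(M)_0,\id)$ which agrees with the composition produced above. I expect no substantive obstacle; all the hard work is already contained in Lemma \ref{Lemma - Boundary Equals Orbit} and Corollary \ref{Theorem - Comparison Theorem Detailed}.
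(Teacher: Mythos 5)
Your proposal is correct and follows the paper's argument essentially verbatim: the paper likewise reads off the injection $\pi_k(\tfGTwoStr(M),\varphi)/\pi_k(\Diff(M)_0,\id)\hookrightarrow\pi_k(\hModuli{M},[\varphi])$ from the long exact sequence of the fibration $\tfGTwoStr(M)_\varphi\to\hModuli{M}\to B\Diff(M)_0$ via Lemma~\ref{Lemma - Boundary Equals Orbit}, and then composes with the injectivity of $\pi_k(p)$ for $k\geq 2$ from Corollary~\ref{Theorem - Comparison Theorem Detailed}. Your closing paragraph identifying the composite with the canonical map is a correct (and worthwhile) piece of bookkeeping that the paper leaves implicit.
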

\section{The Eguchi-Hanson Bundle}\label{Section - EH-Bundle}

We begin with a short reminder of the Eguchi-Hanson metric.
On $\Quat$, we use the $\Quat$-valued inner product $\langle h_1,h_2\rangle_{\Quat} := h_1\Bar{h}_2$, which is left-linear in the first component and right anti-linear in the second component.
In this way, the inner product is invariant under right multiplication and realises the diffeomorphism $S^3 \cong \Sp(1)$ via $\bigl(q \mapsto (h\mapsto h\bar{q})\bigr)$.
If we decompose this inner product into real and imaginary part, we get
\begin{equation*}
    \langle \placeholder, \placeholder \rangle_\Quat =  \langle \placeholder, \placeholder \rangle_{\R^4}1 +  \iu \omega_{\iu} + \ju  \omega_{\ju} +  \ku \omega_{\ku},
\end{equation*}
which are expressed as (\ref{eq - hyper Kaehler forms std}) in standard coordinates.
%%% Note that these forms are self-dual with respect to the orientation $y^0 \wedge y^1 \wedge y^2 \wedge y^3$, but anti-self dual with respect to \diff z_1 \wedge \diff \bar{z}_1 \wedge \diff z_2 \wedge \diff \bar{z}_2. The latter volume form makes \partial_1,...,\partial_k,I(\partial_1),....,I(\partial_k) into a positively oriented bases.

To express these forms in polar coordinates, we define, for each $q \in S^3 \cap \mathrm{Im}\, \Quat$, the right-invariant $1$-forms $\sigma_q$ on $\Quat \setminus \{0\}$ via $\sigma_{q,x}(\placeholder) = \mathrm{Re}( qx \Bar{\placeholder} ) / ||x||^2$.
If we set $r = ||x||$, then $(\diff r)_{x} = \mathrm{Re}(x\Bar{\placeholder})/||x||$.
An easy calculation now shows
\begin{align}\label{eq - eucl hyper polar}
    \langle \placeholder,\placeholder \rangle_{\R^4} &= \diff r^2 + r^2\left( \sigma_{\iu}^2 + \sigma_{\ju}^2 + \sigma_{\ku}^2 \right), & \omega_1^{\mathrm{std}} := \omega_{\iu}^{\mathrm{std}} = r \sigma_{\iu} \wedge \diff r + r^2 \sigma_{\ku} \wedge \sigma_{\ju}, \\
    \omega_2^{\mathrm{std}} := \omega_{\ju}^{\mathrm{std}} &= r \sigma_{\ju} \wedge \diff r - r^2 \sigma_{\ku} \wedge \sigma_{\iu}, & \omega_3^{\mathrm{std}} := \omega_{\ku}^{\mathrm{std}} = r\sigma_{\ku} \wedge \diff r + r^2 \sigma_{\ju} \wedge \sigma_{\iu}. \nonumber
\end{align}

Eguchi and Hanson in \cite[p.~91ff]{Eguchi1979SelfDual} showed that the Riemannian metrics on $\Quat/\Z_2 \setminus \{0\}$ given by
\begin{equation}\label{eq - EguchiHanson metric}
    \gEHpar{a} = \left( 1 + (a/r)^4 \right)^{-1/2} \left( \diff r^2 + r^2 \sigma_{\iu}^2\right) + \left( 1 + (a/r)^4 \right)^{1/2} \left( r^2\sigma_{\ju}^2 + r^2\sigma_{\ku}^2 \right)
\end{equation}
parametrised by $a > 0$, extends, under the identification $T^\vee \CP^1 \setminus \CP^1 \cong \Quat/\Z_2 \setminus \{0\}$, to a Riemannian metric on 
\begin{equation*}
    T^\vee \CP^1 = S^3 \times_{S^1,(\placeholder)^2}\C = (S^3 \times \C)/\sim,
\end{equation*}
where $(p,\lambda) \sim (\eu^{\iu\theta}p,\eu^{-2\iu\theta}\lambda)$.
More precisely, the \emph{desingularisation map} $\sigma \colon T^\vee \CP^1 \rightarrow \Quat/\Z_2$ given by $[p,\lambda] \mapsto [\sqrt{\lambda|\lambda|\,}p]$ restricts on the complement of $\CP^1 = \sigma^{-1}(0)$ to a diffemorphism, and $\sigma^\ast g_{EH,a}$ extends to a hyperkähler metric that we also denote with $g_{EH,a}$.
We wish to emphasise here that we used the maximal torus $\{\mathrm{e}^{\iu \theta} \, : \, \theta \in \R\}$ whose infinitesimal generator corresponds to the distinguished direction $\sigma_{\iu}$.
Since $f_a(r) := (1+(a/r)^4)^{1/4} = 1 + \bigO(r^{-4})$ for $r \to \infty$, this metric is asymptotically, locally Euclidean, in particular, complete.

Eguchi and Hanson further showed that the Levi-Civita connection $1$-form expressed in the orthonormal coframe $(e^\alpha)_{0 \leq \alpha \leq 3} = (f_a^{-1} \diff r, f_a^{-1} r\sigma_{\iu}, rf_a \sigma_{\ju}, rf_a\sigma_{\ku})$ is self-dual\footnote{In \cite{Eguchi1979SelfDual}, the authors require the forms to be anti-self dual due to their opposite orientation convention.} turning $T^\vee \CP^1$ into a complete hyperkähler manifold. 

The forms defined on $\Quat/\Z_2 \setminus \{0\}$ via
\begin{align}\label{eq - EguchiHanson forms}
    \omega^{\EH,a}_{1}:= \omega^{\EH,a}_{\iu} &= e^{1} \wedge e^0 + e^3 \wedge e^2, & \omega^{\EH}_{2}:= \omega^{\EH}_{\ju} = e^{2} \wedge e^0 - e^3 \wedge e^1 = \omega_{\ju}^{\std}, \\
    \omega^{\EH,a}_{3}:= \omega^{\EH,a}_{\ku} &= e^3 \wedge e^0 + e^2 \wedge e^1 = \omega_{\ku}^{\std}, \nonumber
\end{align}
%\begin{align}\label{eq - EguchiHanson forms}
%    \omega^{\EH}_{1}:= \omega^{\EH}_{\iu} &= e^{1} \wedge e^0 + e^3 \wedge e^2 \\
%    \omega^{\EH}_{2}:= \omega^{\EH}_{\ju} &= e^{2} \wedge e^0 - e^3 \wedge e^1 = \omega_{\ju}^{\std} & \omega^{\EH}_{3}:= \omega^{\EH}_{\ku} = e^3 \wedge e^0 + e^2 \wedge e^1 = \omega_{\ku}^{\std} \nonumber
%\end{align}
are parallel with respect to the Levi-Civita connection of the Eguchi-Hanson metric and satisfy $||\omega_q^{\EH,a} - \omega^{\std}_q||_{\mathrm{eucl}} =\bigO(r^{-4})$.
Furthermore, $\omega_1^{\EH} - \omega_1^{\std} = \diff \tau_1^a$ with $\tau_1^a = 1/2 \cdot r^2f_a^2(r) \sigma_{\iu}$, which obviously satisfies $||\nabla^k \tau_1^a|| = O(r^{-(3+k)})$ for $r \to \infty$ because $||\sigma_i||_{g_{EH,a}} = O(r^{-1})$ for $r \to \infty$.
It follows by uniqueness that these forms also extend to $T^\vee \CP^1$ and form the hyperkähler structure there.

Fibre-multiplication with $a$ induces an isometry 
$(T^\vee \CP^1,a^2g_{EH,a^{-1}}) \rightarrow (T^\vee \CP^1,g_{EH,1})$, so we will drop the index $a$ from the notation, unless we wish to particularly emphasise it, and we abbreviate the \emph{Eguchi-Hanson space} $\bigl( T^\vee \CP^1,\gEH, \omega^{\EH}_{1},$ $\omega^{\EH}_{2},$$\omega^{\EH}_{3}  \bigr)$ simply to $EH$.

In complete analogy to (\ref{eq - hyper Kaehler forms std}), the hyperkähler-structure on $EH$ induces a $\GTwo$-structure on $T^3 \times EH$ as follows:
We use the interpretation $T^3 = \{ \mathrm{diag}(\eu^{\iu\theta_1},\eu^{\ju\theta_2}, \eu^{\ku\theta_3}) \, : \, \theta_\alpha \in \R \}$, so that $T_{\id}T^3 = \mathrm{Im} \Quat$. 
The corresponding right-invariant $1$-forms are denotes by $\delta^q = \langle q, \placeholder\rangle_{\R^4} = \mathrm{Re}(q \cdot \bar{\placeholder})$.
The $\GTwo$-structure is now given by
\begin{equation}\label{eq - GTwo form EH}
    \varphi^{\EH,a} = \delta^{\iu} \wedge \delta^{\ju} \wedge \delta^{\ku} + \sum_{\alpha \in \{ \iu,\ju,\ku\}} \delta^{\alpha} \wedge \omega_\alpha^{\EH,a}.
\end{equation}

\subsection{Variation of the Complex Structure}\label{Subsection - Variation of the complex structure}

Equation (\ref{eq - EguchiHanson metric}) shows that the Eguchi-Hanson metric is less symmetric than the euclidean metric. 
Indeed, the group of all orientation preserving isometries of $g_{EH}$ is given by $\Orth(2)\cdot\Sp(1)/\Z_2 \cong \Orth(2) \times \SOrth(3)$ where $\Orth(2)$ is realised by quaternionic left multiplication with elements of the form $\mathrm{e}^{\iu \theta}$ or $\mathrm{e}^{\iu \theta}\ju$ and $\Sp(1) = \{ (\placeholder)\bar{q} \, : \, q \in S^3 \}$ as explained above, while the isometry group of $\Quat/\Z_2$ is $\SOrth(4)/\Z_2 =  S^3\cdot \Sp(1)/\Z_2 \cong \SOrth(3) \times \SOrth(3)$. 
We write an element of $\SOrth(4) = S^3 \cdot \Sp(1)$ as $q_1 \cdot q_2$.
The group holomorphic isometries of $EH$, i.e. $\mathrm{Stab}(g^{EH},\omega^{EH}_\iu)$ is the subgroup $S^1 \cdot \Sp(1)/\Z_2 \cong S^1 \times \SOrth(3)$, see, for example, \cite{platt2022g2instantons} for the two statements.

The choice of $\iu$ as the distinguished direction is completely arbitrary. 
In fact, Eguchi and Hanson in \cite{Eguchi1979SelfDual} chose $\ku$ to be the distinguished one.
To eliminate these choices, we will consider all possible choices at once. 
This leads to the following bundle:
\begin{equation}\label{eq - Eguchi Hanson Bundle Symmetry Defect Def}
    \mathfrak{EH} := \SOrth(4) \times_{S^1\cdot \Sp(1)} EH \,  \xrightarrow{\mathrm{pr}_1} \, \SOrth(4)/S^1  \cdot \Sp(1) = \CP^1, 
\end{equation}
where $S^1 \cdot \Sp(1)$ acts diagonally from the left on the two factors. 
%In formulas:  $(q_1 \cdot q_2,[p,\lambda]) \sim (\mathrm{e}^{\iu \tau} q_1 \cdot q_2 \bar{a},[\mathrm{e}^{\iu \tau} p \bar{a},\lambda])$.
The desingularisation map $\sigma \colon EH \rightarrow \Quat/\Z_2$ given by $[p,\lambda] \mapsto [\sqrt{\lambda|\lambda|\,}p]$ extends to a map of fibre bundles
\begin{equation}\label{eq - desingularisation map symmetry defect version}
    \boldsymbol{\sigma} \colon \mathfrak{EH} \rightarrow \Quat/\Z_2 \times \CP^1, \qquad \qquad [q_1 \cdot q_2,[p,\lambda]] \mapsto ([\bar{q_1}\sqrt{\lambda|\lambda|\,}p q_2], [q_1]) 
\end{equation}
and is a diffeomorphism on the complement of the fibrewise zero section $\iota \colon \CP^1 \times \CP^1 \rightarrow \mathfrak{EH}$ that is given by $\iota([p],[q]) = [q \cdot 1,[p,0]]$.
We wish to emphasise for later purpose that the target of $\bm{\sigma}$ is a trivial bundle while the domain is not.

The Eguchi-Hanson metric easily extends to a fibre metric $\boldsymbol{g}^{\mathfrak{EH}} = \{g^{\mathfrak{EH}}_{[q]}\}_{[q] \in \CP^1}$ on $\mathfrak{EH}$, i.e. to a section of $\mathrm{S}^2(T^{\vertical,\vee} \mathfrak{EH})$. 
Indeed, under the canonical identification $\mathrm{S}^2(T^{\vertical,\vee} \mathfrak{EH})$ with $\SOrth(4) \times_{S^1 \cdot \Sp(1)} \mathrm{S}^2 T^\vee EH$, the fibre metric can be written as $g^{\mathfrak{EH}}_{[q]} = (q \cdot q_2, g^{EH})$.

This expression is well defined because two different elements of $[q] \in \CP^1$ in $\SOrth(4)$ differ by an isometry of $g^{EH}$.
In fact, $g^{\mathfrak{EH}}$ is even a fibrewise Kähler metric with fibrewise Kähler form $\omega^{\mathfrak{EH}}_{1,[q]} = (q \cdot q_2, \omega_1^{EH})$.
Away from the fibrewise zero section, the metric takes the form 
\begin{equation*}
    \boldsymbol{\sigma}_{[q]\, \ast}(g^{\mathfrak{EH}_{[q]}}) = \left( 1 + (a/r)^4 \right)^{-1/2} \left( \diff r^2 + r^2 \sigma_{q^{-1}\iu q}^2\right) + \left( 1 + (a/r)^4 \right)^{1/2} \left( r^2\sigma_{q^{-1}\ju q}^2 + r^2\sigma_{q^{-1} \ku q}^2 \right)
\end{equation*}
and the Kähler form takes the form
\begin{equation*}
    \boldsymbol{\sigma}_{[q]\, \ast}(\omega^{\mathfrak{EH}}_{1,[q]}) = f_a^{-2}r \left(\sigma_{q^{-1}\iu q} \wedge \diff r\right) + f_a^2 r^2 \left(\sigma_{q^{-1}\ku q} \wedge \sigma_{q^{-1}\ju q}\right)
\end{equation*}
from which we see that $\omega^{\mathfrak{EH}}_{1,[q]} - \ell_q^\ast\omega_1^{\mathrm{std}} = 1/2 r^2 (1+(a/r)^{4})^{1/2}\sigma_{q^{-1}\iu q} = \diff \, \ell_q^\ast \tau_1$.

On the other hand, the hyperkähler structures on each fibre do not assemble to global object although $g^{\mathfrak{EH}}_{[q]}$ is a hyperkähler metric on each fibre.
Indeed, we will see in Lemma \ref{Lemma - Char Clases Eguchi Hanson family} below that $c_1(T^\vertical \mathfrak{EH}) \neq 0$. 

Nevertheless, we still can define on $T^3 \times \mathfrak{EH}$ a fibrewise torsion-free $\GTwo$-structure $\mathbf{\varphi}^{\mathfrak{EH}} = \{\varphi_{[q]}^{\mathfrak{EH}}\}_{[q] \in \CP^1}$ generalising construction (\ref{eq - GTwo form EH}) by
\begin{equation}
    \varphi^{\mathfrak{EH}}_{[q]} = [ q \cdot q_2, \delta^{\iu} \wedge \delta^{\ju} \wedge \delta^{\ku} + \sum_{\alpha \in \{\iu, \ju, \ku\}}  \delta^{q^{-1}\alpha q} \wedge \omega_\alpha^{EH}],
\end{equation}
where we used again the identification $\Lambda^3 T^{\vertical,\vee} \mathfrak{EH} = \SOrth(4) \times_{S^1\cdot \Sp(1)} \Lambda^3 T^{\vertical,\vee}EH$.

To see that this definition is well defined, recall that $\omega^{EH}_{\ju} = \omega^{\mathrm{std}}_{\ju}$ and $\omega^{EH}_{\ku} = \omega^{\mathrm{std}}_{\ku}$, which implies
\begin{align*}
    &\quad \ ( \mathrm{e}^{\iu \theta} \cdot q_2)^\ast \bigl( \delta^{\iu} \wedge \delta^{\ju} \wedge \delta^{\ku} + \sum_{\alpha \in \{\iu, \ju, \ku\}}  \delta^{\mathrm{e}^{-\iu \theta} \alpha \mathrm{e}^{\iu \theta}} \wedge \omega_\alpha^{EH} \bigr) \\
    &=\delta^{\iu} \wedge \delta^{\ju} \wedge \delta^{\ku} + \sum_{\alpha \in \{\iu, \ju, \ku\}}  \delta^{\mathrm{e}^{-\iu \theta} \alpha \mathrm{e}^{\iu \theta}} \wedge \ell^\ast_{\mathrm{e}^{\iu \theta}} r^\ast_{q_2}\omega_\alpha^{EH} \\
    &= \delta^{\iu} \wedge \delta^{\ju} \wedge \delta^{\ku} + \delta^{\mathrm{e}^{-\iu \theta} \iu \mathrm{e}^{\iu \theta}} \wedge \omega_{\iu}^{\mathrm{std}} + \sum_{\alpha \in \{\ju, \ku\}}  \delta^{\mathrm{e}^{-\iu \theta} \alpha \mathrm{e}^{\iu \theta}} \wedge \ell^\ast_{\mathrm{e}^{\iu \theta}} \omega_\alpha^{\mathrm{std}}\\
    &= \delta^{\iu} \wedge \delta^{\ju} \wedge \delta^{\ku} + \sum_{\alpha \in \{\iu, \ju, \ku\}}  \delta^{ \alpha } \wedge \omega_\alpha^{EH}.
\end{align*}
%%% On first sight, it does not seem that is calculation suffices, but it does because \delta is arbitray. In other words, we can rewrite \delta^{q^{-1}\alpha q} by $\bar{\delta}^\alpha$ and do the same calculation.

In complete similarity to the fibre metric and the fibre Kähler form, the fibrewise $\GTwo$-form away from the fibrewise zero section takes the form
\begin{equation*}
    \bm{\sigma}_{[q] \, \ast}(\varphi^{\mathfrak{EH}}_{[q]}) = \delta^{\iu} \wedge \delta^{\ju} \wedge \delta^{\ku} + \sum_{\alpha \in \{\iu, \ju, \ku\}}  \delta^{q^{-1}\alpha q} \wedge \ell_q^\ast \omega_\alpha^{EH},
\end{equation*}
and since the standard $\GTwo$-structure is $\SOrth(4)$-invariant we deduce that 
\begin{align*}
    \left|\left| \bm{\sigma}_{[q]\, \ast} \bm{\varphi}^{\mathfrak{EH}}_{[q]} - \varphi^{\mathrm{std}} \right|\right|_{\mathrm{eucl}} = \left|\left| \sum_{\alpha \in \{\iu, \ju,\ku\}} \delta^\alpha \wedge (\omega_\alpha^{EH} - \omega_\alpha^{\mathrm{std}}) \right|\right|_{\mathrm{eucl}} = \left|\left| \delta^{\iu} \wedge \diff \tau_1 \right|\right| \leq C/r^4,
\end{align*}
away from a compact neighbourhood of the fibrewise zero section.

\subsection{Topological Properties}\label{subsection - Topological Properties of the Eguchi-Hanson family}

To read off topological properties of the Eguchi-Hanson family $\mathfrak{EH}$, in particular the characteristic classes of its vertical tangent bundle, another description of $\mathfrak{EH}$ turns out to be more convenient.
To this end, we define 
\begin{equation}\label{Eq: Eguchi-Hanson family Version II}
    \mathcal{EH} = (S^3 \times \CP^1) \times _{S^1,(\placeholder)^2} \C = (S^3 \times \CP^1 \times \C)/\sim,
\end{equation}
where $(p,[q],\lambda) \sim (q^{-1}\eu^{\iu\theta}qp,[q],\eu^{-2\iu\theta}\lambda)$.
The projection given by
\begin{equation*}
    \mathcal{EH} \rightarrow \CP^1 \times \CP^1, \qquad \qquad [p,[q],\lambda] \mapsto ([qp],[q])
\end{equation*}
turns $\mathcal{EH}$ into a $\C$-vector bundle over $\CP^1 \times \CP^1$, while the projection to the second component turns $\mathcal{EH}$ into a fibre bundle over $\CP^1$ with fibre $T^\vee \CP^1$.
This fibre bundle also comes with a family of resolution maps that is given by
\begin{equation*}
    \bm{\sigma}\colon \mathcal{EH} \rightarrow \Quat/\Z_2 \times \CP^1, \qquad \qquad [p,[q],\lambda] \mapsto \bigl([\bar{q}\sqrt{\lambda|\lambda|\,}q p],[q]\bigr)
\end{equation*}
We wish to emphasise for later purpose that target is a trivial bundle while the domain is not.
\begin{lemma}\label{lemma - Variants of Eguchi Hanson family agree}
    The two definitions of the Eguchi-Hanson family agree. 
    More precisely, there is a fibre diffeomorphism that is compatible with the resolution maps.
\end{lemma}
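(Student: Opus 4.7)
The plan is to exhibit an explicit fibre diffeomorphism $\Phi\colon\mathfrak{EH}\to\mathcal{EH}$ and verify, by direct substitution, that it intertwines the two resolution maps under the natural identification of their target trivial bundles.

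First I would use the $\Sp(1)$-factor of the right $S^1\cdot\Sp(1)$-action on $\SOrth(4)$ to normalise representatives: in $\mathfrak{EH}$ one has $[q_1\cdot q_2,[p,\lambda]]=[q_1\cdot 1,[p\bar{q_2},\lambda]]$, reducing the problem to understanding $(S^3\times EH)/S^1$, where the residual $S^1$ acts by right multiplication on $S^3$ and by $[p,\lambda]\mapsto[\eu^{-\iu\theta}p,\lambda]$ on $EH$. Guided by this, define
\begin{equation*}
    \Phi\bigl([q_1\cdot q_2,[p,\lambda]]\bigr):=[q_1 p\bar{q_2},[\bar{q_1}],\lambda],
\end{equation*}
with candidate inverse $\Phi^{-1}\bigl([p',[q'],\lambda']\bigr):=[\bar{q'}\cdot 1,[q'p',\lambda']]$. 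Well-definedness follows from a one-line calculation: under the substitution $(q_1,q_2,p)\mapsto(q_1\eu^{\iu\theta},q_2 r,\eu^{-\iu\theta}pr)$ induced by $(\eu^{\iu\theta}\cdot r)\in S^1\cdot\Sp(1)$, the first coordinate reduces to $q_1 p\bar{q_2}$, while $[\overline{q_1\eu^{\iu\theta}}]=[\eu^{-\iu\theta}\bar{q_1}]=[\bar{q_1}]$ in the description of $\CP^1$ as the quotient of $S^3$ by the left $S^1$-action that underlies the definition of $\mathcal{EH}$. The compositions $\Phi\circ\Phi^{-1}$ and $\Phi^{-1}\circ\Phi$ are the identity by the cancellations $q\bar q=1=\bar q q$, and smoothness in both directions is immediate since every defining formula is polynomial in the coordinates.

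For the compatibility with $\bm{\sigma}$, direct substitution yields
\begin{equation*}
    \bm{\sigma}_{\mathcal{EH}}\bigl(\Phi([q_1\cdot q_2,[p,\lambda]])\bigr)=\bigl([q_1\sqrt{\lambda|\lambda|}\,p\bar{q_2}],[\bar{q_1}]\bigr),
\end{equation*}
to be compared with $\bm{\sigma}_{\mathfrak{EH}}([q_1\cdot q_2,[p,\lambda]])=\bigl([\bar{q_1}\sqrt{\lambda|\lambda|}\,pq_2],[q_1]\bigr)$. Both expressions are the images of the standard resolution $\sqrt{\lambda|\lambda|}\,p$ under the $\SOrth(4)$-isometry $h\mapsto q_1 h\bar{q_2}$ associated with $[q_1\cdot q_2]$ and its inverse, respectively, and they therefore agree in the trivial bundle $\Quat/\Z_2\times\CP^1$ once the two realisations of $\CP^1$ are identified by the natural involution $[q_1]\leftrightarrow[\bar{q_1}]$ and the corresponding orbifold isometry of the fibres is applied.

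The principal obstacle throughout is tracking left versus right conventions for the various $S^1$-actions, in particular the right action on $S^3$ inherited from the $\SOrth(4)$-quotient in $\mathfrak{EH}$ and the left action defining $\CP^1$ inside $\mathcal{EH}$; once these are pinned down, both the well-definedness of $\Phi$ and its compatibility with the resolution maps reduce to routine cancellations of unit quaternions.
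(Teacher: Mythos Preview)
Your approach is exactly the paper's: write down an explicit map between the two models and verify compatibility with the resolution maps by direct substitution.  The paper's map (in your direction) is
\[
\Psi\colon\mathfrak{EH}\to\mathcal{EH},\qquad [q_1\cdot q_2,[p,\lambda]]\mapsto[\bar q_1\,p\,q_2,[q_1],\lambda],
\]
with inverse $[p,[q],\lambda]\mapsto[q\cdot 1,[qp,\lambda]]$, and then $\bm\sigma^{\mathcal{EH}}=\bm\sigma^{\mathfrak{EH}}\circ\Psi^{-1}$ holds \emph{on the nose}.

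Your formula differs from this by the substitution $q_1\leftrightarrow\bar q_1$, $q_2\leftrightarrow\bar q_2$.  That is why your compatibility check does not close: you obtain $([q_1\sqrt{\lambda|\lambda|}\,p\,\bar q_2],[\bar q_1])$ versus $([\bar q_1\sqrt{\lambda|\lambda|}\,p\,q_2],[q_1])$, and you then try to absorb the discrepancy into an ``identification'' of the target.  This last step does not work.  The two $\Quat/\Z_2$-entries differ by the $\SOrth(4)$-element $(q_1\cdot q_2)^2$, which depends on $q_2$; but the base $\CP^1$ only records $[q_1]$, so there is no well-defined bundle automorphism of $\Quat/\Z_2\times\CP^1$ realising your identification.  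Even after normalising $q_2=1$, the candidate fibrewise isometry $h\mapsto q_1^2h$ is not well-defined on $[q_1]\in\CP^1$, since $(q_1\eu^{\iu\theta})^2\ne q_1^2\eu^{2\iu\theta}$ for generic $q_1$.

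The fix is trivial: replace your $\Phi$ by the conjugated version $\Psi$ above, redo the one-line well-definedness check (identical in spirit to yours), and then the resolution maps match exactly with no extra identification needed.
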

\begin{proof}
    We denote the family of resolution maps of with $\mathfrak{EH}$ with $\bm{\sigma}^{\mathfrak{EH}}$.
    It is readily verified that the map
    \begin{equation*}
        \Phi \colon \mathcal{EH} \rightarrow \mathfrak{EH}, \qquad \qquad [p,[q],\lambda] \mapsto [q \cdot 1, [qp,\lambda]]
    \end{equation*}
    is well-defined, smooth, compatible with the bundle projections, and invertible with smooth inverse
    \begin{equation*}
        \Psi \colon \mathfrak{EH} \rightarrow \mathcal{EH}, \qquad \qquad [q_1 \cdot q_2, [p,\lambda]] \mapsto [\bar{q}_1 p q_2, [q_1],\lambda].
    \end{equation*}
    An explicit calculation now shows that $\bm{\sigma}^{\mathcal{EH}} = \bm{\sigma}^{\mathfrak{EH}} \circ \Phi$ and the claim is proved.
\end{proof}
\begin{cor}
    The bundle $T^3 \times \mathcal{EH} \rightarrow \CP^1$ carries a fibre-wise torsion-free $\GTwo$-structure $\bm{\varphi}^{\mathcal{EH}} = \{\varphi_{[q]}^{\mathcal{EH}}\}_{[q]\in \CP^1}$ such that $||\sigma_{[q]\, \ast}\varphi_{[q]}^{\mathcal{EH}} - \GTwostd||_{\mathrm{eucl}} = \mathcal{O}(r^{-4})$ on the complement of $B_r(0)/\Z_2$.
\end{cor}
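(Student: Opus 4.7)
The plan is to transport the fibrewise $\GTwo$-structure $\bm{\varphi}^{\mathfrak{EH}}$ on $T^3 \times \mathfrak{EH}$ constructed in the previous subsection across the diffeomorphism $\Phi \colon \mathcal{EH} \to \mathfrak{EH}$ of Lemma \ref{lemma - Variants of Eguchi Hanson family agree}, and then deduce the asymptotic bound from the already-established one using the compatibility $\bm{\sigma}^{\mathcal{EH}} = \bm{\sigma}^{\mathfrak{EH}} \circ \Phi$.

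Concretely, for each $[q] \in \CP^1$ let $\Phi_{[q]} \colon \mathcal{EH}_{[q]} \to \mathfrak{EH}_{[q]}$ denote the induced fibre diffeomorphism and set
\begin{equation*}
    \varphi^{\mathcal{EH}}_{[q]} := (\mathrm{id}_{T^3} \times \Phi_{[q]})^{\ast}\, \varphi^{\mathfrak{EH}}_{[q]}.
\end{equation*}
Since $\Phi$ is a smooth bundle map and $[q] \mapsto \varphi^{\mathfrak{EH}}_{[q]}$ is smooth in the fibrewise sense, the collection $\bm{\varphi}^{\mathcal{EH}} = \{\varphi^{\mathcal{EH}}_{[q]}\}_{[q] \in \CP^1}$ is a smooth fibrewise $\GTwo$-structure on $T^3 \times \mathcal{EH}$. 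Fibrewise torsion-freeness is automatic: pullback by a diffeomorphism commutes with the exterior derivative, and the Riemannian metric and Hodge star associated to a $\GTwo$-structure are natural, so $\diff \varphi^{\mathfrak{EH}}_{[q]} = 0$ and $\diff \ast_{\varphi^{\mathfrak{EH}}_{[q]}} \varphi^{\mathfrak{EH}}_{[q]} = 0$ immediately pull back to the corresponding identities for $\varphi^{\mathcal{EH}}_{[q]}$.

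For the asymptotic estimate, away from the fibrewise zero section both resolution maps are diffeomorphisms onto $\Quat/\Z_2 \setminus \{0\}$. From $\bm{\sigma}^{\mathcal{EH}}_{[q]} = \bm{\sigma}^{\mathfrak{EH}}_{[q]} \circ \Phi_{[q]}$ and the definition of $\varphi^{\mathcal{EH}}_{[q]}$, pushforward and pullback by $\Phi_{[q]}$ cancel to give
\begin{equation*}
    (\mathrm{id}_{T^3} \times \bm{\sigma}^{\mathcal{EH}}_{[q]})_{\ast}\, \varphi^{\mathcal{EH}}_{[q]} \;=\; (\mathrm{id}_{T^3} \times \bm{\sigma}^{\mathfrak{EH}}_{[q]})_{\ast}\, \varphi^{\mathfrak{EH}}_{[q]}.
\end{equation*}
Inserting the bound $\bigl\| (\mathrm{id}_{T^3} \times \bm{\sigma}^{\mathfrak{EH}}_{[q]})_{\ast}\, \varphi^{\mathfrak{EH}}_{[q]} - \GTwostd \bigr\|_{\mathrm{eucl}} \leq C/r^4$ from the previous subsection then yields the claimed decay.

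There is no real obstacle beyond bookkeeping; all of the substance lives in the construction of $\bm{\varphi}^{\mathfrak{EH}}$ and in Lemma \ref{lemma - Variants of Eguchi Hanson family agree}. The only point that deserves a moment's care is that $\Phi$ respects the bundle structure over $\CP^1$ so that the pullback makes sense fibrewise, and this is immediate from the explicit formula $\Phi([p,[q],\lambda]) = [q\cdot 1, [qp,\lambda]]$.
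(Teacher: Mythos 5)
Your proof is correct and takes essentially the same route as the paper, which simply sets $\bm{\varphi}^{\mathcal{EH}} = \Phi^{\ast}\bm{\varphi}^{\mathfrak{EH}}$ and cites Lemma~\ref{lemma - Variants of Eguchi Hanson family agree} together with the previous subsection. You have merely written out the fibrewise bookkeeping (naturality of the Hodge star, cancellation of $\Phi_{[q]\,\ast}\Phi_{[q]}^{\ast}$) that the paper leaves implicit.
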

\begin{proof}
    Set $\bm{\varphi}^{\mathcal{EH}} = \Phi^\ast \bm{\varphi}^{\mathfrak{EH}}$, then the result follows from Lemma \ref{lemma - Variants of Eguchi Hanson family agree} and the results of the previous subsection.
\end{proof}

Let us now use the just established second description of the Eguchi-Hanson family to study its characteristic classes.
\begin{lemma}\label{Lemma - Char Clases Eguchi Hanson family}
    Under the identification $H^\ast(\CP^1 \times \CP^1) \cong \Z [x,y]/\langle x^2,y^2\rangle$, we have that
    \begin{itemize}
        \item[(i)] the first Chern class of the complex vector bundle $\mathcal{EH} \rightarrow \CP^1 \times \CP^1$ is given by $c_1(\mathcal{EH}) = 2(x-y)$, and that 
        \item[(ii)] the first Pontryagin class of the vertical tangent bundle satisfies $p_1(T^{\vertical}\mathcal{EH}) = -8xy$.  
    \end{itemize}
\end{lemma}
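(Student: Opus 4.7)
The plan is to observe that $\mathcal{EH} \to \CP^1 \times \CP^1$ is a complex line bundle (the defining relation acts on the $\C$-factor by scalar multiplication), so it is determined up to isomorphism by $c_1(\mathcal{EH}) \in H^2(\CP^1 \times \CP^1) \cong \Z x \oplus \Z y$. Part (ii) will then reduce to a short calculation with the vertical tangent bundle at the zero section, so the essential work is (i).

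For (i), I would identify $\mathcal{EH}$ explicitly via a shear transformation. Unfolding the quotient $\CP^1 = S^3/S^1$, we have $\mathcal{EH} = (S^3 \times S^3 \times \C)/(S^1)^2$ where $(\beta,\theta) \in (S^1)^2$ acts by
\[
(\beta,\theta)\cdot(p,q,\lambda) \;=\; \bigl(q^{-1}\eu^{\iu\theta}qp,\; \eu^{\iu\beta}q,\; \eu^{-2\iu\theta}\lambda\bigr).
\]
The shear $(p,q) \mapsto (p',q) := (qp,q)$ is a diffeomorphism of $S^3 \times S^3$ that transports this to the action $(\beta,\theta)\cdot(p',q,\lambda) = (\eu^{\iu(\beta+\theta)}p',\eu^{\iu\beta}q, \eu^{-2\iu\theta}\lambda)$. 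The unimodular change of basis $(\gamma,\delta) := (\beta+\theta,\beta)$ on $(S^1)^2$ then decouples the $(S^3)^2$-part into the standard independent diagonal left $(S^1)^2$-action, while the fibre weight becomes $\eu^{-2\iu(\gamma - \delta)}\lambda$. Reading off these weights identifies $\mathcal{EH}$ with the external tensor product of degree $(-2)$ on the $[p']$-factor and degree $(+2)$ on the $[q]$-factor. Since $[p'] = [qp]$ is the first factor of the base (corresponding to $y$) and $[q]$ is the second factor (corresponding to $x$), this yields $c_1(\mathcal{EH}) = 2x - 2y = 2(x-y)$.

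For (ii), observe that the vertical tangent bundle $T^{\vertical}\mathcal{EH}$ of the projection $\mathcal{EH} \to \CP^1_{[q]}$ is a rank-$2$ complex bundle, and its restriction to the zero section $Z \subset \mathcal{EH}$ (a deformation retract, so $H^\ast(\mathcal{EH}) = H^\ast(\CP^1 \times \CP^1)$) splits canonically as
\[
T^{\vertical}\mathcal{EH}\big|_Z \;\cong\; \pi_y^\ast T\CP^1 \;\oplus\; \mathcal{EH}\big|_Z,
\]
where $\pi_y\colon Z \to \CP^1_{[qp]}$ is the projection onto the first factor: the first summand is the tangent to the $\CP^1$ zero section inside each fibre $T^\vee\CP^1$, and the second is the normal (line-bundle) direction. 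Since $T\CP^1 = \mathcal{O}(2)$ has $c_1 = 2y$, and $c_1(\mathcal{EH}|_Z) = 2(x-y)$ by~(i), one computes
\[
c\bigl(T^{\vertical}\mathcal{EH}\bigr) = (1 + 2y)(1 + 2x - 2y) = 1 + 2x + 4xy \pmod{x^2, y^2},
\]
hence $c_1 = 2x$ and $c_2 = 4xy$. The standard formula $p_1(E_\R) = c_1(E)^2 - 2c_2(E)$ for a rank-$2$ complex bundle then gives $p_1(T^{\vertical}\mathcal{EH}) = 4x^2 - 8xy = -8xy$.

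The main obstacle is the bookkeeping in the shear and basis-change step: one must verify that the unimodular transformation is compatible with the projection to $\CP^1 \times \CP^1$ and that the fibre weights are correctly attributed to the two factors. Once this identification is made, both parts of the lemma follow from standard line-bundle arithmetic on a product of projective spaces.
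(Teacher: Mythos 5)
Your proof is correct, and part (i) takes a genuinely different route from the paper. The paper determines $c_1(\mathcal{EH}) = ax + by$ indirectly, by naturality of Chern classes along two test maps: the restriction $\iota_1$ to $\CP^1 \times \{[1]\}$ (which recovers $T^\vee\CP^1 = \mathcal{O}(-2)$) and the diagonal $\Delta$ (which trivialises $\mathcal{EH}$ via an explicit isomorphism $\underline{\C} \to \Delta^\ast\mathcal{EH}$), and then solves the resulting linear system for $a,b$. You instead produce an explicit bundle isomorphism $\mathcal{EH} \cong \mathcal{O}(-2) \boxtimes \mathcal{O}(2)$ by unfolding $\CP^1 = S^3/S^1$, applying the shear $(p,q)\mapsto(qp,q)$ so that $p'$ transforms by the scalar $\eu^{\iu(\beta+\theta)}$ (the inner automorphism cancels since $q\,q^{-1}=1$), and reparametrising the torus unimodularly; this is worth more than the paper's argument, since it exhibits the bundle rather than just its Chern class, and in particular makes it immediate why $\Delta^\ast\mathcal{EH}$ is trivial. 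For part (ii) you use the same Whitney decomposition along the zero section as the paper, namely tangent of the zero section plus its normal bundle $\mathcal{EH}$; the only cosmetic difference is that you exploit the fibrewise complex structure and compute $p_1 = c_1^2 - 2c_2$ of the rank-$2$ complex bundle, whereas the paper uses additivity of $p_1$ on Whitney sums in torsion-free $H^4$ together with $p_1(\text{line bundle}) = c_1^2$.

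One bookkeeping remark: you assign $y$ to the first $\CP^1$ factor ($[qp]$) and $x$ to the second ($[q]$), which is the reverse of the paper's convention fixed by $\iota_1$. With the standard complex orientation, $\iota_1^\ast\mathcal{EH} = T^\vee\CP^1 = \mathcal{O}(-2)$ has $c_1 = -2$, so the overall sign of $c_1(\mathcal{EH}) = \pm 2(x-y)$ depends on how the generators $x,y$ are normalised; the paper's own step $c_1(T^\vee\CP^1) = 2c$ likewise reflects a sign choice. None of this affects $p_1(T^{\vertical}\mathcal{EH}) = -8xy$, which is symmetric in $x,y$ and quadratic in $c_1(\mathcal{EH})$, so your final answer is unambiguous and agrees with the paper.
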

\begin{proof}
    Clearly, under the inclusion $\iota_1\colon \CP^1 = \CP^1 \times \{[1]\} \hookrightarrow \CP^1 \times \CP^1$, the bundle $\mathcal{EH}$ pulls back to $T^\vee \CP^1 = \mathcal{O}(-2)$.
    On the other hand, if $\Delta \colon \CP^1 \rightarrow \CP^1 \times \CP^1$ denotes the diagonal map, then the map
    \begin{equation*}
       \underline{\C} := \CP^1 \times \C \rightarrow \Delta^\ast \mathcal{EH}, \qquad \text{ given by} \qquad ([q],\lambda) \mapsto \bigl( [1, [q],\lambda], ([q],[q]) \bigr)
    \end{equation*}
    is a vector bundle isomorphism.
    
    Under the identification $H^2(\CP^1 \times \CP^1) \cong \mathrm{span}_{\Z}\{x,y\}$ and $H^2(\CP^1) = \mathrm{span}_{\Z}\{c\}$, the induced homomorphism $H^2(\iota_1)$ sends $x$ to $c$ and $y$ to zero, while $H^2(\Delta)$ sends the two generators $x$ and $y$ to $c$.
    Hence, we deduce from naturality of Chern classes and 
    \begin{equation*}
        c_1(\iota_1^\ast \mathcal{EH}) = c_1(T^\vee\CP^1) = 2c \qquad \text{and} \qquad c_1(\Delta^\ast \mathcal{EH}) = c_1(\underline{\C}) = 0 
    \end{equation*}
    that $c_1(\mathcal{EH}) = 2(x-y)$, which proves $(i)$.
    \vspace{4pt}

    Since the zero section 
    \begin{equation*}
      \iota \colon \CP^1 \times \CP^1 \hookrightarrow \mathcal{EH} \qquad \text{given by} \qquad \left([p],[q]\right) \mapsto ([ q^{-1}p, [q], 0])    
    \end{equation*}
    is a homotopy equivalence, the first Pontryagin class of $T^{\vertical}\mathcal{EH}$ is completely determined by the first Pontryagin class of the restricted bundle $\mathcal{EH}|_{\CP^1 \times \CP^1}$.
    The vertical tangent bundle of this restriction can be decomposed into the fibrewise tangent bundle of the zero-section and its normal bundle:
    \begin{equation*}
        T^\vertical \mathcal{EH}|_{\CP^1 \times \CP^1} \cong \mathrm{pr_1}^\ast T\CP^1 \oplus \mathcal{EH}.
    \end{equation*}
    Since $H^4(\CP^1 \times \CP^1) \cong \Z$ is torsion-free, the first Pontryagin class is additive under Whitney sums of vector bundles, so we deduce the claimed formula by
    \begin{equation*}
        p_1(T^\vertical\mathcal{EH}|_{\CP^1\times \CP^1}) = p_1(\mathrm{pr}_1^\ast T^\vee \CP^1 ) + p_1(\mathcal{EH}) = 0 +c_1(\mathcal{EH})^2 = -8xy.\qedhere
    \end{equation*}
\end{proof}

\section{Generalised Kummer Construction}\label{Section - GeneralisedKummer}

In this section, we will recall the generalised Kummer construction following Joyce \cite{Joyce1996CompactG2II} and Platt \cite{platt2022improvedEstimates}, which gives us the opportunity to fix some notation and conventions. 
Most of the presented results here are not new, except the result on the divisibility of the Pontryagin class in Theorem \ref{Theorem - DivPontrClass}.

The initial data of a generalised Kummer construction is given by a finite group $\Gamma$ that acts on $(T^7,\GTwostd)$ in $\GTwostd$-preserving manner.
The $\GTwo$-structure then descends to the quotient $T^7/\Gamma$ turning it into a flat orbifold.
The singularity set of this orbifold is generated by the fix points of the action of $\Gamma$.

In the following, we will restrict ourselves to those group actions $\Gamma \curvearrowright T^7$ that additionally satisfy the following conditions.

\begin{Conditions}\label{Conditions - GroupsAction}
$ $
 \begin{enumerate} 
     \item $\pi_1(T^7/\Gamma)$ is finite.
     %\item For each component of the singularity set $S$ in $T^7/\Gamma$, there is an open neighbourhood around it that is isometrically isomorphic to $(T^3 \times B^4_\zeta(0)/\Z_2)/F$, where $\zeta > 0$ is a sufficiently small, $F \leq \U(2)/\{\pm\id\} = \mathrm{Iso}(EH,g_{EH},\omega^{EH}_{1})$ is a finite subgroup of order prime to $3$ that acts freely, and isometrically on $T^3 = T^3 \times \{[0]\}$. 
     \item Each component of the singular set of $T^7/\Gamma$ can be resolved by Eguchi-Hanson spaces in the following sense:
     For each component $a$ of the singularity set $S$ in $T^7/\Gamma$, there is an open neighbourhood around it that is isometrically isomorphic to $(T^3 \times B^4_\zeta(0)/\Z_2)/F_a$, where $\zeta > 0$ is a sufficiently small, $F_a \leq \SOrth(4)/\Z_2$ is a finite subgroup of order prime to $3$.
     We require that $F_a$ acts diagonally on the product. 
     On $T^3 = T^3 \times \{[0]\}$, it shall act freely and isometrically, while it shall act on $\Quat/\Z_2$ in the tautological fashion.  
     Furthermore, we require that there is a subgroup $\tilde{F}_a \leq \Orth(2) \times \SOrth(3) = \mathrm{Iso}^+(EH,g_{EH})$ isomorphic to $F_a$ that consists of orientation preserving isometries, and that there exists an equivariant diffeomorphism $\Phi_a \colon EH\setminus \CP^1 \rightarrow \Quat/\Z_2 \setminus \{[0]\}$ that decomposes into $\sigma$ and an orientation preserving isometry $I_a \in \SOrth(4)$: 
     \begin{equation*}
         \xymatrix{\Phi_a \colon EH \ar[r]^{\sigma} & \Quat/\Z_2 \ar[r]^{I_a} & \Quat/\Z_2. } 
     \end{equation*}
     \item For at least one component, we have $F = \{1\}$. 
 \end{enumerate}
\end{Conditions}

We wish to remark that if $F_a \leq \U(2)/\Z_2 = \mathrm{Stab}(EH,g_{EH},\omega_1^{EH})$, then we can choose $\tilde{F}_a = F_a$ and $\Phi_a = \sigma$ because $\sigma$ is $\U(2)/\Z_2$-equivariant. 
However, we are not bound to make this particular choice\footnote{We will always choose $\sigma$ as the resolution map if $F = \{1\}$, though.}, as the next example, which is a regular theme in \cite{Joyce1999ExceptionalHolonomy} and \cite{Joyce1996CompactG2II}, illustrates.

\begin{example}\label{Example - Choices of Desingularisations}
    Assume $F = \Z_2$ acts on $\C^2/\Z_2$ via $[z_1,z_2] \mapsto [z_1,- z_2]$.
    Under the identification $\C^2 \cong \Quat$ via $(z_1,z_2) \mapsto z_1 + z_2\ju$, this involution corresponds to the conjugation with $\iu$.

    There are essentially two ways to regularise this singularity:
    \begin{itemize}
        \item[(i)] Since $F \leq \U(2)/\Z_2$, it extends immediately to $EH$, that is, we can choose $\tilde{F} = F$ and $\Phi = \sigma$. 
        This approach corresponds to what \cite{Joyce1999ExceptionalHolonomy} refers to as using a 'crepant resolution' for desingularisation.
        \item[(ii)] We choose $\tilde{F} = \{\id,\ju\}$, where $\ju$ acts on $EH$ as
        \begin{equation*}
            [p,\lambda] \mapsto [-\ju p\ju,\bar{\lambda}],
        \end{equation*}
        then $\tilde{F}$ is a subgroup of $\mathrm{Iso}^+(EH,g_{EH}) \setminus \mathrm{Stab}(g_{EH},\omega_1)$.
        In this case, the map $\Phi \colon EH \rightarrow \Quat/\Z_2$ given by
        \begin{equation*}
            [p,\lambda] \mapsto  [\mp \ 1/2 \cdot (\iu+\ju) \sqrt{\lambda|\lambda|\,} p  (\iu+\ju) ] 
        \end{equation*}
        satisfies the required equivariance property, and it is a diffeomorphism away from the singularity and its preimage.
        
        Note that $\sigma$ is equivariant with respect to the conjugation with $\ju$, but on $\Quat/\Z_2$ the conjugation with $\ju$ corresponds to the action $(z_1,z_2) \mapsto (\bar{z}_1,\bar{z}_2)$.
        Thus, this approach corresponds to what Joyce in \cite{Joyce1999ExceptionalHolonomy} calls 'smoothing'.
    \end{itemize}
\end{example}

Of course, since the singularity set consists of finitely many components, we may assume that $\zeta < 1$ was chosen uniformly over all components and is small enough such that these open subset are pairwise disjoint.
Condition \ref{Conditions - GroupsAction} yields, for each component of the singular set $S$, an $R$-datum in the sense of Joyce \cite{Joyce2000SpecialHolonomy}, so this orbifold can be resolved by the generalised Kummer construction to a manifold $M$ that carries a parallel $\GTwo$-structure.
Since we will modify this construction in the next section, we are going to recall the necessary parts here.

On the topological side, the generalised Kummer construction produces a manifold $M$ that is obtained by the following pushout
\begin{equation}\label{eq - M as pushout}
    \xymatrix{U  \ar@{=}[d] &&& V \ar@{=}[d] \\
    \bigsqcup_{a \in \pi_0(S)} \bigl( T^3 \times B^4_\zeta(0)/\Z_2 \setminus \{[0]\} \bigr)/F_a \ar@{^{(}->}[d]   \ar[rrr]^-{\id \times \Phi_a^{-1}} &&& \bigsqcup_{a \in \pi_0(S)} \bigl( T^3 \times EH_{< \zeta} \bigr)/\tilde{F}_a  \ar[d]  \\ 
      T^7/\Gamma \setminus S \ar[rrr] &&& M,  }
\end{equation}
were $EH_{< \zeta}$ denotes the preimage of $B_{\zeta}^4(0)$ under $\sigma$.

Next, we are going to recall the construction of the torsion-free $\GTwo$-form.
We restrict our presentation to the construction of the pre-perturbed $\GTwo$-structure for the singularities with $F_a = \{1\}$, so that we have $\Phi_a = \sigma$ by assumption, because we will only resolve these kinds of singularities in a parametrised manner in Section \ref{Section - Bundle Resolution}.
For more details and the general case, we refer to \cite{platt2022improvedEstimates} or the original %\cite{Joyce1996CompactG2I} and 
\cite{Joyce1996CompactG2II} and its generalisation in \cite{Joyce2000SpecialHolonomy}.

By the pushout property, there is a unique map $q \colon M \rightarrow T^7/\Gamma$, the \emph{resolving map}, that is the canonical inclusion on $T^7/\Gamma \setminus S$ and the map $\id \times \sigma$ on $S \times EH_{< \zeta}$.
This map is even smooth.
 
The pushout property further implies that there is a unique continuous function $\Check{r} \colon M \rightarrow \R_{\geq 0}$ such that it restricts to $(s,v) \mapsto |\sigma(v)|$ on $S \times EH_{< \zeta}$ and, on $T^7/\Gamma \setminus S$, it restricts to the map
\begin{align*}
    [y] \mapsto \begin{cases}
       |x|, & \text{if } [y]=(\tau,[x]) \in U, \\ %= S \times B_{\zeta}^4(0)/\Z_2 \setminus \{0\}, 
        \zeta, & \text{if } [y] \notin U.
    \end{cases}
\end{align*}
Fix a smooth, monotonically increasing function $\chi\colon [0,\zeta^2] \rightarrow [0,1]$ such that $\chi(s) = 0$ if $s \leq \zeta^2/4$ and $\chi(s) = 1$ if $s \geq 3\zeta^2/4$.
Using the more convenient index set $\{\iu,\ju,\ku\}$ for $\{1,2,3\}$, we define on $EH_{< \zeta}$ the hyperkähler triple $(\Tilde{\omega}_{\iu}^t, \Tilde{\omega}_{\ju}^t, \Tilde{\omega}_{\ku}^t)$ with 
\begin{equation*}
    \Tilde{\omega}_{\iu}^t := \omega_{\iu}^{EH,t} - \diff(\chi(\Check{r}^2) \tau_{\iu}^t ), \qquad \Tilde{\omega}_{\ju}^t = \omega_2^{EH,t}, \qquad  \Tilde{\omega}_{\ku}^t = \omega_{\ku}^{EH,t}.
\end{equation*}
This hyperkähler triple agrees with the hyperkähler triple of the Eguchi-Hanson space $EH$ in a neighbourhood of the zero section $\CP^1$ and with the Euclidean hyperkähler triple near the boundary $T^3 \times \zeta \RP^3$.
These forms are obviously closed.  

Together with a parallel orthonormal coframe $\delta^{\iu},\delta^{\ju},\delta^{\ku}$ on $T^3$, we define, for each connected component $S_a \cong T^3$ of the singularity set $S$ with $F_a = \{1\}$, the forms on $T^3 \times EH_{< \zeta}$ via
\begin{align}
    \varphi^{t} &:= \delta^{\iu} \wedge \delta^{\ju} \wedge \delta^{\ku} + \sum_{\alpha \in \{\iu,\ju,\ku\}} \delta^{\alpha } \wedge \Tilde{\omega}_\alpha^t = \varphi^{EH,t} - \delta^{\iu } \wedge \diff( \chi(\Check{r}^2) \tau_{\iu }^t)
    \label{eq: preperturbed GTwo form} \\ 
    \begin{split}
        \vartheta^{t} &:= \frac{1}{2}\cdot \Tilde{\omega}_{\iu}^t \wedge \Tilde{\omega}_{\iu}^t + \Tilde{\omega}_{\iu}^t \wedge \delta^{\ju} \wedge \delta^{\ku} + \Tilde{\omega}_{\ju}^t \wedge \delta^{\ku} \wedge \delta^{\iu } + \Tilde{\omega}_{\ku}^t \wedge \delta^{\iu} \wedge \delta^{\ju}
    \end{split} \label{eq: preperturbed first integral}\\
    \begin{split}&\, = \ast_{\varphi^{EH,t}}\varphi^{EH,t} -\omega_{\iu}^{\EH,t} \wedge \diff (\chi(\Check{r}^2)\tau_{\iu}^t) + 1/2 \cdot \diff (\chi(\Check{r}^2)\tau_{\iu}^t) \wedge \diff (\chi(\Check{r}^2)\tau_{\iu}^t)\\
    &\qquad \qquad - \diff (\chi(\Check{r}^2)\tau_{\iu}^t) \wedge \delta^{\ju} \wedge \delta^{\ku}\end{split} \nonumber \\ 
    \psi^{t} &:= \varphi^{t} - \ast_{\varphi^{t}} \vartheta^{t}. \label{eq: preperturbed error}
\end{align} 
 
The form $\varphi^{t}$ agrees with the standard $\GTwo$-structure $\GTwostd$ near the boundary $T^3 \times \zeta \RP^3$ and $\vartheta^{t}$ agrees with $\ast_{\GTwostd} \GTwostd$ near the boundary. 
Thus, the forms (\ref{eq: preperturbed GTwo form}) - (\ref{eq: preperturbed error}) can be extended by $\varphi^{\std}$ on the rest of $T^7/\Gamma \setminus U$.
Note that $\varphi^{t}$ is closed, while $\vartheta^{t}$ and $\psi^{t}$ are not.

By \cite[Theorem 7.1]{Joyce1999ExceptionalHolonomy}, see also \cite[Chapter 11.5]{Joyce2000SpecialHolonomy}, the forms $\varphi^t$ and $\psi^t$ have following properties:
\begin{theorem}\label{Theorem - Joyce's form estimation}
    There are positive constants $A_1,A_2,A_3,$ and $\varepsilon$ such that the one-parameter family of forms $\varphi^t$ and $\psi^t$ satisfy, for all $t \in (0,\varepsilon]$, the following identities:
    \begin{itemize}
        \item[(o)] $\diff^{\ast_{\varphi^t}} \psi^t = \diff^{\ast_{\varphi^t}} \varphi^t$.
        \item[(i)] $||\psi^t||_{L^2} \leq A_1t^4$, $||\psi^t||_{C^0} \leq A_1 t^3$ and $||\diff^{\ast_{\varphi^t}} \psi_t||_{L^{14}} \leq A_1 t^4$.
        \item[ii)] The injectivity radius $\mathrm{inj}(g_{\varphi^t})$ satisfies $\mathrm{inj}(g_{\varphi^t}) \geq A_2 t$.
        \item[(iii)] The Riemannian curvature tensor $R(g_{\varphi^t})$ satisfies $||R(g_{\varphi^t})|| \leq A_3 t^{-2}$.
    \end{itemize}
    Here, the norms $||\placeholder||_{C^0}$, $||\placeholder||_{L^2}$, and $||\placeholder||_{L^{14}}$ depend on the metric $g_{\varphi^t}$.
\end{theorem}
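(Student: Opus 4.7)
The plan is to localise all the analysis to the annular gluing region
$A_\zeta := \{\zeta/2 \le \check{r} \le \sqrt{3}\,\zeta/2\}$
inside each neighbourhood $T^3 \times EH_{<\zeta}$. Outside of $A_\zeta$ the cut-off $\chi(\check{r}^2)$ is locally constant, so the triple $(\tilde{\omega}_{\iu}^t, \tilde{\omega}_{\ju}^t, \tilde{\omega}_{\ku}^t)$ equals either the Eguchi-Hanson triple $(\omega^{\EH,t}_\alpha)$ (where $\chi = 0$) or the Euclidean triple $(\omega^{\std}_\alpha)$ (where $\chi = 1$), both of which are genuine hyperkähler triples. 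Consequently $\vartheta^t$ agrees with $*_{\varphi^t}\varphi^t$ there, so $\psi^t \equiv 0$ outside $A_\zeta$. All norms of $\psi^t$ therefore reduce to estimates on the compact region $A_\zeta$, where $\check{r}$ is bounded between two positive constants independent of~$t$ and where the background metric $g_{\varphi^t}$ is uniformly equivalent to the Euclidean metric.

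Part (o) is a direct computation: each $\tilde{\omega}_\alpha^t$ is closed by construction (the correction to $\omega_\iu^{\EH,t}$ is subtracted inside a $d$, while the other two components of the triple are unchanged), and the coframe $\delta^\alpha$ on $T^3$ is parallel; hence $d\vartheta^t = 0$. From $\psi^t = \varphi^t - *_{\varphi^t}\vartheta^t$ together with the Hodge-star identity $d^{*_{\varphi^t}} = \pm *_{\varphi^t} d\, *_{\varphi^t}$ one obtains $d^{*_{\varphi^t}}(*_{\varphi^t}\vartheta^t) = \pm *_{\varphi^t} d\vartheta^t = 0$, which gives $d^{*_{\varphi^t}}\psi^t = d^{*_{\varphi^t}}\varphi^t$.

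For (i), the key pointwise input is the decay $\|\nabla^k \tau_\iu^t\| = \mathcal{O}(t^4 r^{-(3+k)})$ of the primitive satisfying $d\tau_\iu^t = \omega_\iu^{\EH,t} - \omega_\iu^{\std}$. On $A_\zeta$, where $r$ is uniformly bounded below, this yields $|\tilde{\omega}_\iu^t - \omega_\iu^{\std}|_{C^k} = \mathcal{O}(t^4)$ for each~$k$, with derivatives of $\chi$ bounded on $A_\zeta$. The algebraic identity $*_\varphi\vartheta = \varphi$ holds whenever the underlying triple $(\omega_1,\omega_2,\omega_3)$ is genuinely hyperkähler; Taylor-expanding the smooth nonlinear map $(\omega_1,\omega_2,\omega_3) \mapsto \varphi - *_\varphi \vartheta$ about such a triple then gives a pointwise bound $|\psi^t| \le C t^4$ on $A_\zeta$, from which $\|\psi^t\|_{C^0} \le A_1 t^3$ follows for $t$ small and $\|\psi^t\|_{L^2} \le A_1 t^4$ follows by integrating over $A_\zeta$, whose $g_{\varphi^t}$-volume is uniformly bounded. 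The $L^{14}$-bound on $d^{*_{\varphi^t}}\psi^t = d^{*_{\varphi^t}}\varphi^t$ is obtained by rewriting this as $\pm *_{\varphi^t} d(*_{\varphi^t}\varphi^t - \vartheta^t)$ using $d\vartheta^t = 0$ and controlling the derivative of the small difference $*_{\varphi^t}\varphi^t - \vartheta^t$ via the same Taylor expansion, picking up one derivative of $\chi$.

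Parts (ii) and (iii) reduce to standard estimates for the rescaled Eguchi-Hanson metric $g_{\EH,t}$. Near each desingularised core $\CP^1$, $g_{\varphi^t}$ is approximately the product of a flat $T^3$ with the Eguchi-Hanson space of neck size $t$, whose injectivity radius is comparable to $t$ (the core $\CP^1$ has diameter $\mathcal{O}(t)$) and whose sectional curvatures are bounded by $C t^{-2}$, concentrated near the core sphere. Away from the necks, $g_{\varphi^t}$ is $C^\infty$-close to the flat orbifold metric and has uniformly bounded curvature and uniformly positive injectivity radius. The perturbation from $\varphi^{\EH,t}$ to $\varphi^t$ is supported in $A_\zeta$—a region uniformly bounded away from the cores—so it does not affect the $t$-scaling of either quantity. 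The principal obstacle in carrying out the plan is the careful bookkeeping in (i) to obtain the sharp exponents in the $C^0$, $L^2$, and $L^{14}$ norms: this requires tracking how the $g_{\varphi^t}$-metric on tensors, the $g_{\varphi^t}$-volume form, and the $t$-dependent primitive $\tau_\iu^t$ combine in $A_\zeta$, and extracting the precise order of vanishing of $\varphi - *_\varphi\vartheta$ at hyperkähler triples.
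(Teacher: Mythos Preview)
The paper does not supply its own proof of this theorem; it simply cites Joyce (Theorem 7.1 of \emph{Compact Riemannian 7-manifolds with holonomy $G_2$. II} and Chapter 11.5 of \emph{Compact Manifolds with Special Holonomy}). Your outline reproduces precisely the structure of Joyce's argument: localise to the annular transition region, use closedness of $\vartheta^t$ for (o), obtain the pointwise bound $|\psi^t|=O(t^4)$ on the annulus via smooth dependence of $\varphi - *_\varphi\vartheta$ on the triple and its vanishing at genuine hyperk\"ahler triples, and deduce (ii)--(iii) from the scaling behaviour of the Eguchi--Hanson metric. This is the standard route and is correct.

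One remark worth making explicit: the paper states just above the theorem that ``$\vartheta^{t}$ and $\psi^{t}$ are not'' closed, which, taken at face value, would invalidate your derivation of (o). Your observation that $\vartheta^t$ is a sum of wedge products of the closed forms $\tilde{\omega}_\alpha^t$ and the parallel coframe $\delta^\alpha$, hence $d\vartheta^t=0$, is correct---and indeed (o) could not hold otherwise, since $d^{*_{\varphi^t}}(*_{\varphi^t}\vartheta^t)=\pm *_{\varphi^t}d\vartheta^t$. The paper's remark is a slip as far as $\vartheta^t$ is concerned; it is $\psi^t$ (through the metric-dependent $*_{\varphi^t}$) that fails to be closed.
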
%%%% The second inequality of (i) is not stated in the source but it is rather easy to prove because the Gram-Schmidt orthogonalisation depends conitnuously on the metric. The Hodge star operator is a combinatorial procedure in terms of an orthogonal basis, which implies the claim.
By \cite[Theorem 11.6.1]{Joyce2000SpecialHolonomy}, which in turn relies on Theorem G1 and Theorem G2 in \cite{Joyce2000SpecialHolonomy}, there is a torsion-free $\GTwo$-form $\tilde{\varphi}^t$ such that $||\Tilde{\varphi}^t - \varphi^t||_{C^0(\varphi^t)} = O(t^{1/2})$.
In particular, $\mathrm{Hol}(g_{\tilde{\varphi}}^t) \leq \GTwo$.

\vspace{5pt}

Let us now turn our attention to the topological properties of $M$.
\begin{lemma}\label{Lemma - Fundamental Group of Resoultion}
    The resolution map $q \colon M \rightarrow T^7/\Gamma$ induces an isomorphism on their fundamental groups.
\end{lemma}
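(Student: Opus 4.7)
The natural approach is to apply van Kampen's theorem to the open cover of $M$ induced by the pushout~\eqref{eq - M as pushout}, and simultaneously to the analogous open cover of $T^7/\Gamma$, and then to check that $q$ induces an isomorphism on the fundamental group of each piece. Concretely, I would fix $0<\zeta'<\zeta$, set
\begin{equation*}
U' = \bigsqcup_{a\in\pi_0(S)}\bigl(T^3\times B^4_\zeta(0)/\Z_2\bigr)/F_a,
\qquad
V' = T^7/\Gamma \setminus \overline{\bigsqcup_a(T^3\times B^4_{\zeta'}(0)/\Z_2)/F_a},
\end{equation*}
and let $U=q^{-1}(U')$, $V=q^{-1}(V')$. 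By construction of the pushout, $q$ restricts to a diffeomorphism $V\to V'$, while on the annular intersection $q|_{U\cap V}$ is up to the equivariant diffeomorphism $\Phi_a$ the identity; both therefore induce isomorphisms on~$\pi_1$.

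The only nontrivial step is the restriction $q|_U$. Here the point is that each summand of $U$ and of $U'$ is the total space of a locally trivial fibre bundle over the flat manifold $T^3/F_a$: since $F_a\cong\tilde F_a$ acts freely on $T^3$, the projections
\begin{equation*}
EH_{<\zeta}\longrightarrow (T^3\times EH_{<\zeta})/\tilde F_a\longrightarrow T^3/\tilde F_a,
\qquad
B^4_\zeta(0)/\Z_2\longrightarrow (T^3\times B^4_\zeta(0)/\Z_2)/F_a\longrightarrow T^3/F_a
\end{equation*}
are fibre bundles with simply connected and connected fibres: $EH_{<\zeta}$ deformation retracts onto its zero section $\C P^1$, and $B^4_\zeta(0)/\Z_2$ is a cone on $\R P^3$, hence contractible. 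The long exact sequence of homotopy groups of each bundle therefore identifies the fundamental group of the total space with $\pi_1(T^3/F_a)$. Since $q|_U$ covers the identity map on $T^3/F_a$ under the tautological identification $\tilde F_a\cong F_a$, it induces an isomorphism on $\pi_1$ of each component.

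Feeding these three isomorphisms into van Kampen's theorem, applied to $M=U\cup V$ and to $T^7/\Gamma=U'\cup V'$, one reads off that $\pi_1(q)\colon\pi_1(M)\to\pi_1(T^7/\Gamma)$ is an isomorphism. The main subtlety I expect is not in the homotopy theory but in the bookkeeping: one must verify that when $F_a$ acts non-trivially on $EH$ via $\tilde F_a$, the chosen equivariant diffeomorphism $\Phi_a=I_a\circ\sigma$ respects the fibre bundle structure over $T^3/F_a$. This is automatic, however, because $I_a$ only affects the $\Quat/\Z_2$-factor, so $q|_U$ is a genuine map of bundles over $T^3/F_a$ inducing the identity on the base. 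A minor care is also needed in noting that $U$ is a disjoint union of path-connected pieces, each of which contributes its own amalgamated factor in the van Kampen pushout, consistent on both sides of~$q$.
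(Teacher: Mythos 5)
Your proof is correct and follows the same strategy as the paper: decompose via the pushout and apply Seifert--van Kampen, after checking that $q$ induces an isomorphism on each piece. The only difference is in how the local step for $U$ is handled. The paper lifts $\id\times\Phi_a$ to the free $\tilde F_a$-cover $T^3\times EH$ of $(T^3\times EH_{<\zeta})/\tilde F_a$, uses the short exact sequence $1\to\Z^3\to\pi_1((T^3\times EH_{<\zeta})/\tilde F_a)\to\tilde F_a\to 1$ and its counterpart on the other side, and concludes by the Five Lemma. You instead fibre over $T^3/F_a$ and use the long exact sequence of the fibre bundle, noting that both fibres are connected and simply connected. The two arguments encode the same information (the covering exact sequence is the tail of the fibration sequence with the roles of $\Z^3$ and $F_a$ interchanged), so they are equivalent; your phrasing has the small advantage of handling connectedness and simple connectivity of the fibre uniformly rather than splitting into $\pi_1(T^3\times EH)=\Z^3$ and a deck transformation argument. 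Your closing remark about $\Phi_a$ respecting the bundle structure over $T^3/F_a$ is exactly the point that makes the bundle map well-defined, and it is correctly observed to be automatic since $\Phi_a$ only touches the fibre factor.
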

\begin{proof}
    The map $\id \times \Phi_a \colon T^3 \times EH \rightarrow T^3 \times \Quat/\Z_2$ is $\tilde{F}_a$-$F_a$-equivariant and
    it induces an isomorphism between the corresponding fundamental groups.
    The groups $\tilde{F}_a$ and $F_a$ act freely and properly discontinuously, so that the fundamental groups of the quotients are given by a semidirect product of $\Z^3 \rtimes \tilde{F}_a$ and $\Z^3 \rtimes F_a$, respectively.
    The Five Lemma now implies that the descent of $\id \times \Phi_a$ to the quotients induces an isomorphism between their fundamental groups.
    An iterative application of Seifert--van Kampen to the pushout diagram (\ref{eq - M as pushout}) now implies that $\pi_1(q) \colon \pi_1(M) \rightarrow \pi_1(T^7/\Gamma)$ is an isomorphism.
\end{proof}
Thus, if $T^7/\Gamma$ has a finite fundamental group, its resolution $M$ has a finite fundamental group, too, which implies $\Hol(g_{\tilde{\varphi}^t}) = \GTwo$.

To prove Theorem \ref{Main Theorem - Example}, we require information about the divisibility of the integral Pontryagin class (or rather its image in the integral lattice).
To this end, recall that the universal coefficient theorem implies that the evaluation homomorphism $H^k(M;\R) \rightarrow \mathrm{Hom}_{\Z}(H_k(M;\Z),\R)$ is an isomorphism.
Using this isomorphism, we define the \emph{integral lattice} $H^k(M;\Z \subseteq \R)$ as the subgroup that corresponds to $\mathrm{Hom}_{\Z}(H_k(M;\Z),\Z)$ under the evaluation homomorphism.

\begin{theorem}\label{Theorem - DivPontrClass}
    If $p_1(M)$ is the first Pontryagin class of a generalised Kummer construction from above, then $p_1(M) \in H^4(M;3\Z \subseteq \R)$.
    In particular, for all smooth maps from each closed, oriented four manifold $f \colon N^4 \rightarrow M$, we have
    \begin{equation*}
        \langle p_1(M);f_\ast([N^4])\rangle = \int_{N^4} f^\ast p_1(M) \in 3\Z.
    \end{equation*}
\end{theorem}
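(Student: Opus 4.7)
My plan is to represent $p_1(TM)$ by a Chern-Weil form supported near the exceptional divisors and to reduce the pairing with any $4$-cycle to a multiple of the Pontryagin number of a single Eguchi-Hanson piece. The essential number-theoretic input is
\begin{equation*}
\int_{EH} p_1(T(EH)) \;=\; -3,
\end{equation*}
which can be obtained from the Atiyah-Patodi-Singer signature theorem applied to $EH$ (signature $-1$ together with the vanishing signature $\eta$-invariant of the round $S^3/\Z_2$) or, equivalently, from the $K3$ Kummer construction, where $p_1[K3] = 3\,\sign(K3) = -48$ is distributed equally among the $16$ Eguchi-Hanson pieces resolving the singularities of $T^4/\Z_2$.

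First, I would choose a connection $\nabla$ on $TM$ coinciding on $M\setminus V$ with the pulled-back flat connection $q^*\nabla^{T^7/\Gamma}$, and on each $V_a = (T^3\times EH_{<\zeta})/\tilde F_a$ with the product of the flat connection on $T^3$ and the Levi-Civita connection of the Eguchi-Hanson metric. Such an interpolation exists on the annular overlap since Eguchi-Hanson is asymptotically locally Euclidean with curvature decay $O(r^{-4})$. The Chern-Weil form $P := -\tfrac{1}{8\pi^2}\mathrm{tr}(R^\nabla \wedge R^\nabla)$ then represents $p_1(TM)$, vanishes on $M\setminus V$, and on each $V_a$ equals the pullback $\pi_a^* P_{EH}$ of the Eguchi-Hanson Pontryagin form via the projection $\pi_a\colon V_a\to EH_{<\zeta}/\tilde F_a$.

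Given a smooth $f\colon N^4\to M$ with $N$ closed and oriented, I would perturb $f$ to be transverse to the exceptional divisors and set $N_a := f^{-1}(V_a)$. Lifting $f|_{N_a}$ to the $|\tilde F_a|$-fold cover $\tilde N_a\to T^3\times EH_{<\zeta}$ and projecting to $EH$ yields a proper map $g_a\colon \tilde N_a\to EH$. Via Stokes' theorem and the transgression identity $P_{EH} = d\,\mathrm{CS}$ on the asymptotic end of $EH$ (or alternatively, using a compactly-supported representative of $[P_{EH}]$ in $H^4_c(EH)\cong \Z$), one obtains
\begin{equation*}
\int_{N_a} f^* P \;=\; -\frac{3\,\deg(g_a)}{|\tilde F_a|},
\end{equation*}
where $\deg(g_a)\in\Z$ is defined by capping off $\partial \tilde N_a$ outside the support of the compactly-supported representative. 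Therefore $\int_N f^* p_1(TM) = \sum_a \int_{N_a} f^*P \in \Z$ by integrality of $p_1(TM)$.

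The divisibility by $3$ then follows from a coprimality argument: specialising to maps $f$ whose image lies in a single $V_a$, each summand $-3\deg(g_a)/|\tilde F_a|$ is itself an integer, so $|\tilde F_a|\mid 3\deg(g_a)$. Since $\gcd(|\tilde F_a|, 3) = 1$ by Conditions~\ref{Conditions - GroupsAction}, this forces $|\tilde F_a|\mid \deg(g_a)$, and each summand lies in $3\Z$. Hence $\int_N f^* p_1(TM) \in 3\Z$ as required. The main technical difficulty lies in making the Chern-Weil localisation rigorous---specifically, the Chern-Simons transgression at the asymptotic end of Eguchi-Hanson and the proper-degree argument---both of which reduce to standard APS machinery.
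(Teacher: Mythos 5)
Your overall architecture matches the paper's: a Chern--Weil form supported on the resolved necks, the identity $\int_{EH}\tilde p_1 = -3$ obtained from the $K3 = $ Kummer resolution of $T^4/\Z_2$ (or, equivalently, from the signature formula applied to one Eguchi--Hanson piece), lifting to the $|\tilde F_a|$-fold cover to produce an integral degree, and a coprimality argument using $\gcd(|\tilde F_a|,3)=1$. The $K3$ computation and the decomposition into boundary pieces via transversality are the same as in the paper's proof.

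However, your final coprimality step has a gap. You argue that each summand $-3\deg(g_a)/|\tilde F_a|$ individually lies in $3\Z$ by ``specialising to maps $f$ whose image lies in a single $V_a$.'' What this specialisation shows is only that the degrees $\deg(g_a)$ arising from maps supported entirely inside $V_a$ are divisible by $|\tilde F_a|$; it does \emph{not} show that the $\deg(g_a)$ arising from an arbitrary $f\colon N^4\to M$ have this property. For a general $f$ the pieces $N_a$ are manifolds with boundary, and a priori two non-integer summands $-3\deg(g_a)/|\tilde F_a|$ and $-3\deg(g_b)/|\tilde F_b|$ could conspire to give an integer total without each being an integer. The paper avoids this by arguing on the \emph{total} sum: since $\langle p_1(M),f_\ast[N^4]\rangle \in \Z$ (integrality of $p_1$), multiplying by $L := \operatorname{lcm}_a|\tilde F_a|$ clears denominators and puts $L\cdot\langle p_1,f_\ast[N]\rangle = -3\sum_a\lambda_a L/|\tilde F_a|$ manifestly in $3\Z$; because $\gcd(L,3)=1$, the conclusion $3\mid \langle p_1,f_\ast[N^4]\rangle$ follows. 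Your approach would go through if you could show every value of $\deg(g_a)$ is realised by a closed $N'$ mapping entirely into $V_a$, but that requires an additional excision/capping-off argument that you do not supply; the LCM argument sidesteps the issue cleanly.

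A smaller omission: you establish the ``in particular'' clause (divisibility of all pairings against smooth $4$-manifolds), but the stated theorem is the lattice statement $p_1(M)\in H^4(M;3\Z\subseteq\R)$. To deduce the lattice statement from the pairing statement you need to know that every class in $H_4(M;\Z)$ is represented by a smooth map from a closed oriented $4$-manifold; the paper invokes Thom's realisability theorem here, and you should too.

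Your sign conventions, the Chern--Weil localisation via curvature decay $O(r^{-4})$, and the $K3$-based evaluation of $-3$ are all fine; the required fix is to replace the per-summand specialisation with an argument on the total sum (or to justify why the specialisation controls general $f$), and to add the Thom step.
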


The Pontryagin class inside the integral lattice of the real cohomology groups can be described using Chern-Weil theory.
If we use the pre-perturbed $\GTwo$-form on $M$, then the curvature of its induced metric vanishes in the "interior" $T^7/\Gamma \setminus \Bar{U} = M \setminus \bar{V}$ of the resolved manifold $M$, so that the Pontryagin $4$-form obtained by Chern-Weil theory is supported within $V$.
We can therefore deduce Theorem \ref{Theorem - DivPontrClass} from local considerations.

\begin{proof}
    The $\GTwo$-form $\varphi^t$ induces a product metric $g_{\varphi^t}=\langle \placeholder,\placeholder\rangle_{T^3} \oplus h^t$, where the first factor is the flat metric on $T^3$ and the second factor is a Riemannian metric on $EH$ that is flat outside of $EH_{\leq 0.9\, \zeta}$.
    This metric is $\tilde{F}_a$-invariant, so it descends to the quotient, too.
    
    As a consequence, the $4$-form obtained from the Chern-Weil construction applied to the Levi-Civita connection of $g_{\varphi^t}$ provides a lift $\tilde{p}_1(T^3 \times EH) \in H^4(T^3\times EH_{\leq \zeta},T^3\times \partial EH_{\leq \zeta})$.
    Since the metric is of product form, this lift satisfies $\tilde{p}_1(T^3 \times EH) = \mathrm{pr}_2^\ast \, \tilde{p}_1(EH)$, where $\tilde{p}_1(EH) \in H^4(EH_{\leq \zeta},\partial EH_{\leq \zeta})$ is defined analogously using the metric $h^t$.

    To determine $\tilde{p}_1(EH)$, we recall that the $K3$ surface can be obtained by resolving a flat torus $T^4/\Z_2$ along its singular set consisting of $16$ points, see, for example, \cite[Example 7.3.14]{Joyce2000SpecialHolonomy}.
    As the metric $h^t$ is flat near the boundary, it can be extended with the flat metric to the "interior" of the $K3$ surface.
    Thus, the differential form that represents $p_1(K3)$ obtained by applying Chern-Weil theory to that metric is again supported in the $16$ copies of $EH_{< \zeta}$.
    Hirzebruch's signature theorem now yields
    \begin{align*}
        -16 = \mathrm{sign}(K3) &= \frac{1}{3}\int_{K3} p_1(K3) = \frac{16}{3} \int_{EH_{\leq \zeta}} \Tilde{p}_1\left(EH_{\leq \zeta}\right),
    \end{align*}
    which implies that $\tilde{p}_1(EH) = - 3 \cdot \mathrm{gen}_{EH}$, where $\mathrm{gen}_{EH}\in H^4(EH_{\leq \zeta},\partial EH_{\leq \zeta}; \Z) \cong \Z$ is represented by any four form (supported in $EH_{<\zeta}$) that integrates to $1$.

    Next consider a smooth map $f\colon (N^4,\partial N^4) \rightarrow T^3 \times (EH_{\leq \zeta},\partial EH_{\leq \zeta})$, where $N^4$ a compact, oriented manifold with (possibly empty) boundary. 
    Since $\tilde{F}_a$ acts on $T^3 \times EH_{\leq \zeta}$ in an orientation preserving manner, it does so, too, on the total space of the pullback
    \begin{equation*}
        \xymatrix{ \hat{N}^4 \ar[d]_{q} \ar[rr]^-F && T^3 \times EH_{\leq \zeta} \ar[d]^{\kappa} \\ N^4 \ar[rr]^-f && \bigl(T^3 \times EH_{\leq \zeta}\bigr)/\tilde{F}_a,  }
    \end{equation*}
    which is again an compact, oriented manifolds with boundary.
    Thus, the fundamental classes of these two manifolds are related by $q_\ast [\hat{N}^4,\partial \hat{N}^4] = |\tilde{F}_a|\cdot [N^4,\partial N^4]$.

    Using naturality of the Chern-Weil construction with respect to local isometries, it follows that
    \begin{align*}
        \langle \tilde{p}_1((T^3\times EH)/\tilde{F}_a), f_\ast[N^4,\partial N^4] \rangle &= |\tilde{F}_a|^{-1}\langle \tilde{p}_1((T^3\times EH)/\tilde{F}_a), f_\ast q_\ast[\hat{N}^4,\partial \hat{N}^4] \rangle \\
        &= |\tilde{F}_a|^{-1}\langle \tilde{p}_1((T^3\times EH)/\tilde{F}_a), \kappa_\ast F_\ast[\hat{N}^4,\partial \hat{N}^4] \rangle \\
        &= |\tilde{F}_a|^{-1}\langle \tilde{p}_1(T^3\times EH),  F_\ast[\hat{N}^4,\partial \hat{N}^4] \rangle \\
        &= |\tilde{F}_a|^{-1}\langle \tilde{p}_1(EH), (\mathrm{pr}_2 \circ F)_\ast[\hat{N}^4,\partial \hat{N}^4] \rangle\\
        &= -3/|\tilde{F}_a| \langle \mathrm{gen}_{EH}, (\mathrm{pr}_2 \circ F)_\ast[\hat{N}^4,\partial \hat{N}^4] \rangle =: -3 / |\tilde{F}_a| \underbrace{\lambda_a}_{\in \Z}.
    \end{align*}

    Finally, each smooth map $f \colon N^4 \rightarrow M^7$ can be homotopied to a map that intersects the smooth submanifold $\partial \bar{V} \cong \bigsqcup_a (T^3 \times \RP^3)/\tilde{F}_a$ transversely for the space of these maps is an open and dense subspace of $C^\infty(N^4,M)$, see \cite[Theorem 3.2.1]{hirsch1997differential}.
    It follows that $f^{-1}(\bar{V}) \subseteq N^4$ is a manifold with boundary of codimension zero.
    From the previous discussion, we deduce
    \begin{align*}
        \langle p_1(M), f_\ast [N^4] \rangle &= \sum_{a \in \pi_0(S)} \langle \tilde{p}_1\bigl((T^3 \times EH)/\tilde{F}_a \bigr), f_\ast [N^4] \rangle \\
        \begin{split}&= \sum_{a \in \pi_0(S)} \langle \tilde{p}_1\bigl((T^3 \times EH)/\tilde{F}_a \bigr),\\
        & \qquad \qquad \quad f_\ast [f^{-1}((T^3 \times EH_{\leq \zeta})/\tilde{F}_a), f^{-1}((T^3 \times \partial EH_{\leq \zeta})/\tilde{F}_a) ] \rangle \end{split}\\
        &= -3 \sum_{a \in \pi_0(S)} \frac{1}{|\tilde{F}_a|} \lambda_a.
    \end{align*}
    
    Since $\langle p_1(M), f_\ast [N^4] \rangle \in \Z$ and since $|\tilde{F}_a|$ is prime to $3$,  we deduce that $\langle p_1(M), f_\ast [N^4] \rangle$ has to be divisible by $3$.

    By Thom's work \cite{Thom1954QuelquesPropertiesGlobal}, every class in $H_4(M;\Z)$ is realised by an embedded, closed, oriented submanifold of dimension $4$, hence $p_1(M) \in H^4(M,3\Z \subseteq \R)$, as claimed. 
\end{proof}

\section{Bundles of Joyce’s Resolutions}\label{Section - Bundle Resolution}

We use the results of Section \ref{Section - EH-Bundle} to carry out the construction of the previous section in families. 
We first begin with the construction of the fibre bundle $E_M \rightarrow \CP^1$ together with forms $\bm{\varphi}^t$ and $\bm{\psi}^t$ on its vertical tangent bundle, which are parametrised versions of (\ref{eq: preperturbed GTwo form}) and (\ref{eq: preperturbed error}).
We will then generalise Joyce's perturbation procedure, namely Theorem 11.6.1 in \cite{Joyce2000SpecialHolonomy}, to deform $\bm{\varphi}^t$ a fibrewise torsion-free $\GTwo$-form on $E_M$.

\subsection{Construction of the Bundles}
Let $M$ be a generalised Kummer construction from Section \ref{Section - GeneralisedKummer} and $\mathcal{EH} \rightarrow \CP^1$ be the Eguchi-Hanson bundle, which comes with a family of blow-up maps $\bm{\sigma} \colon \mathcal{EH} \rightarrow \Quat/\Z_2 \times \CP^1$. 
In complete analogy to the unparametrised case, we define $\mathcal{EH}_{< R}$ to be the preimage of the open balls $B_{R}^4(0)/\Z_2 \times \CP^1$ under $\bm{\sigma}$.

Using the notation of Condition \ref{Conditions - GroupsAction}, we define $\mathsf{A} \subseteq \pi_0(S)$ to be the subset of path components for which $F_a = \{1\}$.
For each subset $\mathsf{B} \subseteq \mathsf{A}$, we define a bundle $E_{M,\mathsf{B}}$ as the following pushout:
\begin{equation}\label{eq - bundle as pushout}
    \xymatrix@C+1em{ \bigcup_{b \in \mathsf{B}}\left( T^3 \times \zeta\RP^3 \right) \times \CP^1 \ar@{^{(}->}[d] \ar[rr]^-{\id \times  \bm{\sigma}^{-1} } && \bigcup_{b \in \mathsf{B}} T^3 \times \mathcal{EH}_{\leq \zeta} \ar[d] \\
    \left( M \setminus \bigcup_{b \in \mathsf{B}} T^3 \times EH_{<\zeta} \right) \times \CP^1 \ar[rr] && E_{M, \mathsf{B}.}}
\end{equation}
By the pushout property, the projections $\left( M \setminus \bigcup_{b \in \mathsf{B}} T^3 \times EH_{<\zeta} \right) \times \CP^1 \rightarrow \CP^1$ and $T^3 \times \mathcal{EH}_{\leq \zeta} \rightarrow \CP^1$ yield a map $p \colon E_{M,\mathsf{B}} \rightarrow \CP^1$.

The set $\mathsf{B}$ parametrises the singularities of the orbifold that we resolve with the twisted family $\mathcal{EH}$. 
The remaining path components are resolved with the untwisted family $EH \times \CP^1$.

\begin{lemma}
  The map $p \colon E_{M,\mathsf{B}} \rightarrow \CP^1$ turns $E_{M,\mathsf{B}}$ into a smooth fibre bundle over $\CP^1$ with fibre $M$.
\end{lemma}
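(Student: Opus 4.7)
The projection $p$ is well-defined by the pushout's universal property, since both corner maps in (\ref{eq - bundle as pushout}) target $\CP^1$ compatibly (top-right via $\mathcal{EH} \to \CP^1$, bottom-left via projection). The strategy is to produce local trivializations $E_{M,\mathsf{B}}|_{U_\alpha} \cong U_\alpha \times M$ over an open cover of $\CP^1$ and to verify smoothness of the transition data.

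Fix a cover $\{U_\alpha\}$ of $\CP^1$ over which the principal $(S^1\cdot\Sp(1))$-bundle $\SOrth(4) \to \CP^1$ admits local sections. These induce smooth trivializations $\Theta_\alpha: U_\alpha \times EH \xrightarrow{\cong} \mathcal{EH}|_{U_\alpha}$ under which, by the computation in Section \ref{Section - EH-Bundle}, the resolution takes the form $\bm{\sigma} \circ \Theta_\alpha \colon ([q], v) \mapsto ([q], I_{[q]}(\sigma(v)))$ for a smooth family of linear isometries $I_{[q]} \in \SOrth(4)$ acting on $\Quat/\Z_2$. Since each $I_{[q]}$ preserves Euclidean radii, it acts on the collar annulus $T^3 \times \{\zeta/2 \leq |x| \leq \zeta\}/\Z_2$ which sits inside $M \setminus \bigcup_{b \in \mathsf{B}} T^3 \times EH_{<\zeta}$ around each boundary sphere $T^3 \times \zeta\RP^3$ with $b \in \mathsf{B}$.

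Using this collar action, I build a smooth family of self-diffeomorphisms $F_{[q]}$ of $M \setminus \bigcup_{b \in \mathsf{B}} T^3 \times EH_{<\zeta}$ that equals $\id \times I_{[q]}$ on each such boundary sphere and restricts to the identity outside the collar, by interpolating along a smooth path $t \mapsto I_{[q]}^t$ from $\id$ to $I_{[q]}$ in $\SOrth(4)$. Define $\Xi_\alpha \colon U_\alpha \times M \to E_{M,\mathsf{B}}|_{U_\alpha}$ by applying $F_{[q]}$ on the $M \setminus \bigcup_{b \in \mathsf{B}} T^3 \times EH_{<\zeta}$ factor and $\id_{T^3} \times \Theta_\alpha$ on each $T^3 \times EH_{\leq \zeta}$ factor with $b \in \mathsf{B}$. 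The two halves glue correctly, since on $T^3 \times \zeta\RP^3$ the $M$-side identification becomes $(\tau, [x]) \mapsto (\tau, I_{[q]}([x]))$, while the pre-trivialized $\mathcal{EH}$-side identification is $(\tau, [x]) \mapsto (\tau, \sigma^{-1}(I_{[q]}^{-1}([x])))$; both correspond under the standard $M$-gluing $\sigma^{-1}$ to the same point after the substitution $[y] = I_{[q]}([x])$.

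Transition functions $\Xi_\alpha^{-1} \circ \Xi_\beta$ over $U_\alpha \cap U_\beta$ are smooth families of self-diffeomorphisms of $M$ supported near the Eguchi-Hanson pieces, yielding the smooth fibre bundle structure with fibre $M$. The main obstacle is producing the interpolating path $t \mapsto I_{[q]}^t$ smoothly in $[q]$; this can be arranged after refining the cover so that each $I_{[q]}$ stays in a normal neighborhood of $\id$ in $\SOrth(4)$ where the exponential map is a diffeomorphism, giving a smooth family of logarithms.
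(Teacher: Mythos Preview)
Your strategy coincides with the paper's: trivialise $\mathcal{EH}$ over contractible charts and then untwist the mismatch on the complement piece via an isotopy supported in a collar. Two points need correction. First, the collar $T^3 \times \{\zeta/2 \leq |x| \leq \zeta\}/\Z_2$ lies in $T^3 \times EH_{<\zeta}$, not in $M \setminus \bigcup_{b} T^3 \times EH_{<\zeta}$; you must work in an \emph{outer} collar $\{\zeta \leq |x| \leq \zeta'\}$ for some $\zeta' > \zeta$, which the paper secures by assuming $T^3 \times EH_{\leq\zeta} \hookrightarrow M$ extends to $T^3 \times EH_{\leq 2\zeta}$. Second, the claim that refining the cover forces each $I_{[q]}$ into a normal neighbourhood of $\id$ is false: under the paper's explicit trivialisations one computes that $I_{[q]}$ is left multiplication by $\overline{s_j([q])}$, which is close to $\id$ only for $[q]$ near $[1]$ and cannot be made small elsewhere by changing the section. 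The correct argument, which the paper carries out, is that $U_\alpha$ is contractible, so the map $[q] \mapsto s_j([q])$ (equivalently $[q]\mapsto I_{[q]}$) is smoothly null-homotopic; the paper writes this homotopy as $H_j\colon U_j \times [1,2] \to S^3$ and uses it directly in the collar formula. With these two fixes your argument becomes the paper's proof.
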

\begin{proof}
    To avoid notational clutter, we assume that $\mathsf{B}$ consists of a single element, which we drop from the notation. 
    Since all constructions are carried out in a neighborhood of the singularities, this assumption is without restriction.
    
    We first provide convenient local trivialisations of $\mathcal{EH}$ over the open subsets $U_j := \{ [z_0:z_1] \, : \, z_j \neq 0 \} \subseteq \CP^1$.
    On these subsets, we define $s_j \colon U_j \rightarrow S^3$ via 
    \begin{equation*}
        s_0 \colon [z_0:z_1] \mapsto \frac{1 + z_1/z_0 \cdot \ju}{|1 + z_1/z_0 \cdot \ju|} \qquad \text{ and } \qquad s_1 \colon [z_0:z_1] \mapsto \frac{z_0/z_1 + \ju}{|z_0/z_1 + \ju|}.
    \end{equation*}
    The local trivialisations are now given by:
    \begin{equation*}
       \Xi_j \colon \mathcal{EH}|_{U_j} \rightarrow EH \times U_j \qquad \text{via} \qquad \Xi_j([p,[q],\lambda]) = ([s_j(q)p,\lambda],[q]).  
    \end{equation*}
    Since $U_j$ is contractible, we find homotopies $H_j \colon U_j \times [1,2] \rightarrow S^3$ such that $H_j(\placeholder, 1) = s_j$ and $H_j(\placeholder,2) = 1$.
    If we choose $\zeta$ in the construction of $M$ sufficiently small, we may assume that the canonical embedding of $T^3 \times EH_{\leq \zeta}$ into $M$ extends to $T^3 \times EH_{\leq 2\zeta}$.
    
    We now define the maps $\Theta_j \colon \left( M \setminus T^3 \times EH_{<\zeta} \right) \times U_j \rightarrow \left( M \setminus T^3 \times EH_{<\zeta} \right) \times U_j$ via
    \begin{equation*}
        \Theta_j \colon (y,[q]) \mapsto \begin{cases}
            (\tau,[H_j([q],|\lambda|/\zeta)p,\lambda],[q]), & \text{if } y = (\tau,[p,\lambda]) \in T^3 \times \left(EH_{\leq 2\zeta} \setminus EH_{< \zeta} \right), \\
            (y,[q]), & \text{if } y \notin M \setminus T^3 \times EH_{\leq 2\zeta}.
        \end{cases} 
    \end{equation*}
    By the construction of $\Theta_j$ the following diagram commutes:
    \begin{equation*}
        \xymatrix@C+1em{ \left( M \setminus T^3 \times EH_{<\zeta} \right) \times U_j \ar[d]^{\Theta_j} & \ar@{_{(}->}[l] T^3 \times \zeta\RP^3 \times U_j \ar[rr]^-{\id \times  \bm{\sigma}^{-1}} \ar[d]^{(\tau,[x],[q]) \mapsto (\tau,[s_j([q])x],[q])} && T^3 \times \mathcal{EH}_{\leq \zeta}|_{U_j} \ar[d]^{\Xi_j} \\
        \left( M \setminus T^3 \times EH_{<\zeta} \right) \times U_j  & \ar@{_{(}->}[l] T^3\times \zeta\RP^3 \times U_j \ar[rr]^-{\id \times  {\sigma}^{-1} } && T^3 \times \EH_{\leq \zeta}|_{U_j},}
    \end{equation*}
    so the pushout property yields a local trivialisations $\Psi_j \colon E_{M}|_{U_j} \rightarrow M \times U_j$.
    The trivialisations are smooth if the homotopies $H_j$ are smooth and stationary near the boundaries.
\end{proof}

The definition of $\Check{r}$ from the previous section readily adapts to the parametrised case by using $\bm{\sigma}$ instead of $\sigma = \bm{\sigma}_1$, so we get a map $\check{\mathbf{r}} \colon E_{M}\rightarrow \R_{\geq 0}$.
We define parametrised versions of (\ref{eq: preperturbed GTwo form}) - (\ref{eq: preperturbed error}), using once more the index set $\{\iu,\ju,\ku\}$ instead of $\{1,2,3\}$, as follows:
\begin{align}
    \bm{\varphi}^t_{[q]} &:= \ast_{\varphi^{\mathcal{EH},t}_{[q]}} \varphi^{\mathcal{EH},t}_{[q]} - \diff \bigl(\chi(\check{\mathbf{r}}^2) \tau_{q^{-1}\iu q}^t\bigr), \label{eq: family preperturbed GTwo form} \\
    \bm{\vartheta}^t_{[q]} &:= \ast_{\varphi^{\mathcal{EH},t}_{[q]}}\varphi^{\mathcal{EH},t}_{[q]} -(\omega_{\iu}^{\EH, q^{-1}\iu q}) \wedge \diff (\chi(\Check{\mathbf{r}}^2)\tau_{q^{-1}\iu q}^t) + \frac{1}{2} \diff (\chi(\Check{\mathbf{r}}^2)\tau_{q^{-1}\iu q}^t) \wedge \diff (\chi(\Check{\mathbf{r}}^2)\tau_{q^{-1}\iu q}^t) \label{eq: family preperturbed first integral} \\
    & \hspace{2.14cm} - \diff (\chi(\Check{\mathbf{r}}^2)\tau_{q^{-1}\iu q}^t) \wedge \delta^{q^{-1}\ju q} \wedge \delta^{q^{-1}\ku q}, \nonumber \\
    \bm{\psi}^t_{[q]} &:= \bm{\varphi}^t_{[q]} - \ast_{\bm{\varphi}^t_{[q]}} \bm{\vartheta}^t_{[q]}. \label{eq: family preperturbed error}
\end{align}
The form $\bm{\vartheta}^{t}_{[q]}$ is well defined, because an explicit calculation shows that $\delta^{q^{-1}\ju q} \wedge \delta^{q^{-1}\ku q}$ does not depend on the choice of the representative $q$ of $[q] \in \CP^1$.
%
%
%\begin{equation}\label{eq: family forms}
%    \bm{\varphi}^t_{[q]} = \varphi^{t,[q]}, \qquad \qquad \bm{\vartheta}_{[q]}^t = \vartheta^{t,[q]}, \qquad \qquad \bm{\psi}_{[q]}^t = \psi^{t,[q]}
%\end{equation}

These three forms further agree with their stationary counterparts (\ref{eq: preperturbed GTwo form}) - (\ref{eq: preperturbed error}) in a neighbourhood of $\bigcup_{b \in \mathsf{B}}\left( T^3 \times \zeta\RP^3 \right)$, so they extend to well defined forms on the entire bundle $E_{M,\mathsf{B}}$. 
Furthermore, $\bm{\varphi}^t_{[q]}$ and $\bm{\psi}_{[q]}^t$ satisfy the assumptions of Theorem \ref{Theorem - Joyce's form estimation} because they were constructed as their counterparts in Section \ref{Section - GeneralisedKummer}, we just use different admissible choices when we vary the base parameter $[q]$.

\subsection{A Perturbation Result for Families}
We would like to apply a family version of \cite[Theorem 11.6.1]{Joyce2000SpecialHolonomy} in the sense that if we start with a family of closed $\GTwo$-forms $\bm{\varphi}_{[q]}^t$ that depends continuously on the parameter, then we get a family of torsion-free $\GTwo$-forms $\Tilde{\bm{\varphi}}^t_{[q]}$ that also depends continuously on the parameter.

More abstractly, assume that we have a fibre bundle $E_M \rightarrow B$ over a compact base space $B$ that carries a fibrewise closed $\GTwo$-structure $\{\varphi_b\}_{b \in B}$, that is, an element $\varphi \in \Omega^{3,\vertical}(E) = \Gamma(E,\Lambda^3T^{\vertical,\vee}E)$. 
Of course, this yields a family of fibre-metrics $\{g_{\varphi_b}\}_{b\in B}$.

The family version of \cite[Theorem 11.6.1]{Joyce2000SpecialHolonomy} reads as follows:
\begin{theorem}\label{Theorem - Family Joyce Perturbation}
    Let $A_1$, $A_2$, $A_3$ be positive constants. 
    Then there exists positive constants $\kappa$, $K$ such that, whenever $0 < t \leq \kappa$, the following is true:

    Let $E_M \rightarrow B$ be a smooth fibre bundle over a compact base space $B$ and let $\{\varphi\}_{b \in B}$ be a fibrewise $\GTwo$-form on $E_M$ with $\diff \varphi_b = 0$ for all $b\in B$.
    Suppose that $\psi = \{\psi_b\}_{b \in B}$ is a fibrewise smooth $3$-form on $E_M$ with $\diff^{\ast_{\varphi_b}} \psi_b = \diff^{\ast_{\varphi_b}} \varphi_b$, and
    \begin{itemize}
        \item[(i)] $||\psi_b||_{L^2} \leq A_1 t^4$, $||\psi_b||_{C^0} \leq A_1t^{1/2}$, and $||\diff^{\ast_{\varphi_b}}\psi_b||\leq \lambda$,
        \item[(ii)] the injectivity radius $\delta(g_{\varphi_b})$ satisfies $\delta(g_{\varphi_b})\geq A_2 t$, and
        \item[(iii)] the Riemannian curvature tensor $R(g_{\varphi_b})$ satisfies $||R(g_{\varphi_b})||_{C^0} \leq A_3 t^{-2}$,
    \end{itemize}
    where the norms $||\placeholder||_{L^2}$, $||\placeholder||_{C^0}$, and $||\placeholder||_{L^{14}}$ arise from $g_{\varphi_b}$.
    Then there is a fibrewise $\GTwo$-form $\tilde{\varphi} = \{\tilde{\varphi}\}_{b \in B} $ that depends continuously on the parameter and that satisfies $||\tilde{\varphi}_b - \varphi_b||_{C^0} \leq K t^{1/2}$.  
\end{theorem}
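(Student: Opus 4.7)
The plan is to run Joyce's perturbation argument from \cite[Theorem 11.6.1]{Joyce2000SpecialHolonomy} fibrewise and then promote it to a family statement. Since hypotheses (i)--(iii) are imposed uniformly in $b \in B$, the constants $\kappa$ and $K$ produced by Joyce's argument depend only on $A_1, A_2, A_3$, so they may be taken uniform in $b$. This immediately yields, for each $b \in B$ separately, a torsion-free $\GTwo$-form $\tilde{\varphi}_b$ with $\|\tilde{\varphi}_b - \varphi_b\|_{C^0} \leq K t^{1/2}$. The real work is to arrange $b \mapsto \tilde{\varphi}_b$ to be continuous.

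I would first recall that Joyce writes $\tilde{\varphi}_b = \varphi_b + \diff \eta_b$ for a $2$-form $\eta_b$ obtained as the limit of an iterative sequence $\eta_b^{(n)}$, each step of which amounts to solving a linear elliptic equation of the schematic form $\Delta_{g_{\varphi_b}} \zeta_b^{(n+1)} = F\bigl(\eta_b^{(n)}, \psi_b, \varphi_b\bigr)$ via the Green's operator $G_b$ of $\Delta_{g_{\varphi_b}}$ on the orthogonal complement of harmonic forms. Because the fibre metrics $g_{\varphi_b}$ depend continuously on $b$ in the fibrewise smooth topology (as $\varphi_b$ does by hypothesis), $G_b$ depends continuously on $b$ in suitable Sobolev operator norms, and the nonlinear map $F$ depends continuously on its arguments.

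By induction on $n$, one shows that $b \mapsto \eta_b^{(n)}$ is continuous into a fibrewise Sobolev space of $2$-forms, locally trivialised via the bundle structure of $E_M \to B$. Since Joyce's fixed-point iteration is a uniform contraction (all estimates are in terms of $A_1, A_2, A_3$ and $t$ only), the convergence $\eta_b^{(n)} \to \eta_b$ is uniform in $b$, and the limit $\eta_b$, hence $\tilde{\varphi}_b$, is continuous in $b$. Fibrewise smoothness then follows from elliptic regularity applied to the torsion-free equation $\diff^{\ast_{\tilde{\varphi}_b}} \tilde{\varphi}_b = 0$.

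The main obstacle is the parameter-dependent bookkeeping: each ingredient of Joyce's quite technical analysis, namely harmonic projections, the $C^0$--$L^2$--$L^{14}$ juggling, gauge fixings, and the nonlinear correspondence between $\GTwo$-forms and their associated metrics, has to be revisited and checked for continuity in $b$, with all continuity moduli absorbed into constants independent of $b$. Compactness of $B$ is essential here to pass from pointwise to uniform continuity. A potentially cleaner alternative is to recast the torsion-free condition as the zero set of a smooth Banach map $\mathcal{F} \colon B \times U \to V$ between spaces of fibrewise sections, verify that the fibrewise linearisation is invertible with uniform bounds using (ii)--(iii), and invoke a parametric implicit function theorem to obtain $\tilde{\bm{\varphi}}$ directly with continuous dependence on $b$.
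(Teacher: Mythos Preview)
Your proposal is correct and follows essentially the same approach as the paper: run Joyce's Banach fixed-point iteration fibrewise, observe that the constants are uniform in $b$, and then propagate continuity in $b$ through each step of the iteration so that the uniform limit is continuous.

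The one place where you are too quick is the sentence ``$G_b$ depends continuously on $b$ in suitable Sobolev operator norms''. This is exactly where the paper concentrates its effort. The subtlety is that the Green's operator involves projecting away from the $b$-dependent subspace of harmonic forms, and one must show that this projection varies continuously. The paper isolates this as a separate statement: the map $g \mapsto \mathrm{pr}_{\ker \Delta_g} + \Delta_g$ is continuous from $\Riem(M)$ into the invertible operators $L^2_k \to L^2_{k-2}$. The proof uses that $\dim\ker\Delta_g = b_3(M)$ is constant (Hodge theory), so the kernels form an honest vector bundle over the relevant stratum of self-adjoint Fredholm operators, whence the kernel projection varies norm-continuously; a bootstrapping via G\aa rding's inequality then upgrades this to continuity in all Sobolev norms. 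Once this is in hand, your inductive step ``$\eta_b^{(n)}$ continuous $\Rightarrow$ $\eta_b^{(n+1)}$ continuous'' goes through exactly as you say. Your alternative via a parametric implicit function theorem is plausible but is not the route the paper takes.
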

The proof of this theorem is little original. 
Once we know that the two major ingredients of the proof of \cite[Theorem 11.6.1]{Joyce2000SpecialHolonomy}, namely Theorem G1 and G2 in loc. cit., generalises to the family set up, then the proof of Joyce straightforwardly generalises, too.

Theorem G1 in \cite{Joyce2000SpecialHolonomy} generalises by its very formulation to the family setting, because one simply has to replace the injectivity radius and the curvature bound of a single metric by the minimum of all injectivity radii and the maximum of all curvatures over the parameter space, respectively. %% Does the injectivity radius depend continuously on the Riemannian metric in the C^k topology -- even C^0?

Theorem G2 in \cite{Joyce2000SpecialHolonomy} generalises to the family setting as follows:
\begin{theorem}[Theorem G2]
    Let $\lambda, C_1$, and $C_2$ be positive constants. 
    Then there exist positive constants $\kappa$, $K$ such that whenever $0 < t \leq \kappa$, the following is true:

    Let $E \rightarrow B$ a fibre bundle over a compact base space $B$ with smooth, closed fibre $M$ and a fibrewise closed $\GTwo$-structure on it.
    Suppose that $\psi = \{\psi_b\}_{b\in B} \in \Omega^{3,\vertical}(E) = \Gamma(E,\Lambda^3T^{\vertical,\vee}E)$ fibrewise $3$-form with $\diff^{\ast_{\varphi}} = \diff^{\ast_\varphi} \varphi$ and
    \begin{itemize}
        \item[(i)] $||\psi_b||_{L^2} \leq \lambda t^4$, $||\psi_b||_{C^0} \leq \lambda t^{1/2}$, and $||\diff^{\ast_{\varphi_b}} \psi_b||_{L^{14}} \leq \lambda$,
        \item[(ii)] if $\chi \in L_1^{14}(\Lambda^3T^\vee E_{b})$, then $||\nabla^{\varphi_b}\chi|| \leq C_1( ||\diff \chi||_{L^{14}} + ||\diff^{\ast} \chi||_{L^{14}} + t^{-4}||\chi||_{L^2} ),$\footnote{That means that $(ii)$ in Joyces theorem holds true for all fibres. We require the same for $(iii)$.}
        \item[(iii)] if $\chi \in L_1^{14}(\Lambda^3 T^\vee E_b)$, then $||\chi||_{C^0} \leq C_2(t^{1/2} ||\nabla^{\varphi_b} \chi||_{L^{14}} + t^{-7/2} ||\chi||_{L^2} )$.
    \end{itemize}
    Let $\varepsilon_1$ be the universal constant from \cite[Definition 10.3.3]{Joyce2000SpecialHolonomy} and $F$ as in \cite[Proposition 10.3.5]{Joyce2000SpecialHolonomy}.
    Then there exists $\eta \in \Gamma^0(E,\Lambda^{2}T^\vertical E)$ and a function $f \in C^0(E)$, both smooth in fibre direction, such that 
    \begin{equation}\label{eq: Solution equation for G2 form graph}
        \Delta_\varphi \eta = \diff^{\ast_\varphi} \psi + \diff^{\ast_\varphi} (f\psi) + \ast \diff F(\diff \eta) \qquad \text{and} \qquad f\varphi = \frac{7}{3} \pi_1(\diff \eta).
    \end{equation}
\end{theorem}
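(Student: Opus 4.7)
The plan is to follow Joyce's proof of the original Theorem G2 in \cite[Chapter 11]{Joyce2000SpecialHolonomy} essentially verbatim, and then to upgrade the conclusion to continuous dependence on the base parameter~$b \in B$ by tracking this continuity through the iteration.

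First, I would recall the structure of Joyce's single-fibre argument. Starting from $\eta_0 = 0$, $f_0 = 0$, one defines $(\eta_{n+1}, f_{n+1})$ from $(\eta_n, f_n)$ by solving the linear elliptic problem
\begin{equation*}
   \Delta_{\varphi_b}\eta_{n+1} \;=\; \diff^{\ast_{\varphi_b}}\psi_b \;+\; \diff^{\ast_{\varphi_b}}(f_n\,\psi_b) \;+\; \ast_{\varphi_b}\,\diff F(\diff \eta_n)
\end{equation*}
on the space of coexact $2$-forms on the fibre~$E_b$, and then setting $f_{n+1}\varphi_b = \tfrac{7}{3}\pi_1(\diff \eta_{n+1})$. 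Hypotheses~(ii) and~(iii) are precisely the $L^{14}_1$-elliptic estimate and the $C^0$-Sobolev estimate needed to make the nonlinear terms~$F$ and~$\pi_1$ into contractions on a sufficiently small $L^{14}_1$-ball, provided that the right-hand side of the first step is controlled, which is guaranteed by~(i). The output is the existence of a fixed point $(\eta_b, f_b)$ with the quantitative bound~$\|\eta_b\|_{L^{14}_1} \leq K t^{1/2}$.

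Second, since the constants $\lambda$, $C_1$, $C_2$ in hypotheses~(i)--(iii) are assumed uniform in $b\in B$, the universal constants $\kappa$ and $K$ produced by Joyce's argument are also independent of~$b$, and the contraction rate on the $L^{14}_1$-ball is uniform in~$b$. Hence the iteration is well-defined on every fibre and converges uniformly in~$b$ to a family of solutions $(\eta_b, f_b)_{b\in B}$.

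Third, and this is the main new input, I would establish continuity of $b \mapsto (\eta_b, f_b)$. Over a sufficiently small open subset~$U\subset B$ with a local trivialisation $E_M|_U \cong M \times U$, the fibre metrics $g_{\varphi_b}$ become a continuous family of Riemannian metrics on the fixed closed manifold~$M$, and the Hodge Laplacians~$\Delta_{\varphi_b}$ form a continuous family of elliptic operators with varying coefficients. Standard elliptic theory implies that their Green's operators on coexact forms depend continuously on~$b$ as bounded operators $L^{14}\to L^{14}_1$ (see for example \cite[Proposition 10.3.5]{Joyce2000SpecialHolonomy} for the relevant single-fibre estimates). All pointwise algebraic operations appearing in the iteration, such as $\pi_1$, $\ast_{\varphi_b}$ and $F$, depend continuously on~$\varphi_b$ and hence on~$b$. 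By induction, each iterate $(\eta_n, f_n)$ depends continuously on~$b$ in $L^{14}_1 \oplus C^0$, and since the iteration converges uniformly in~$b$ by the previous step, so does the limit.

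The main obstacle I anticipate is the continuous dependence of the Green's operator on~$b$ uniformly in the neck parameter~$t$, since the metrics $g_{\varphi_b^t}$ degenerate as $t \to 0$; however, this is already implicitly handled by~(ii) and~(iii), which provide $b$-uniform analytic control on the family of fibres at scale~$t$ and reduce the continuity question to a perturbation argument within a fixed scale. Once this is in place, the transfer of continuity from $(\varphi_b, \psi_b)$ to $(\eta_b, f_b)$ through the contraction mapping iteration is essentially formal.
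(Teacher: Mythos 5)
Your overall strategy --- run Joyce's Banach fixed-point iteration fibrewise, localise to a trivialising chart so all objects become families of tensors on a fixed closed manifold, and propagate continuity in~$b$ inductively through the iteration --- is exactly the paper's. The genuine gap is in your Step~3, where you assert that ``standard elliptic theory'' gives continuous dependence in~$b$ of the Green's operator on coexact forms. That is precisely what is not standard here, and the paper devotes Lemma~\ref{Lemma - Kernel Bundle} and Theorem~\ref{Theorem - Continuity Laplace Operators} to it. The solution operator used at each iteration step is $\Delta_{\varphi_b}^{-1}$ restricted to the orthogonal complement of $\ker\Delta_{\varphi_b}$, so its continuity in~$b$ requires continuity of the orthogonal projection $\mathrm{pr}_{\ker\Delta_{\varphi_b}}$ in the metric. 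This is not a one-line consequence of elliptic regularity: one first observes that $\dim\ker\Delta_{\varphi_b}$ is locally constant (Hodge theory), shows that on the stratum of self-adjoint Fredholm operators of fixed kernel dimension the kernel projection is operator-norm continuous (Lemma~\ref{Lemma - Kernel Bundle}), conjugates $\Delta_{g}$ by the $L^2$-isometries $\mathcal{G}_{g,g_0}$ and passes to the bounded Cayley-type transform $B_g = A_g/(1+A_g)$ to apply that lemma, and finally bootstraps from $L^2$ to higher Sobolev regularity via G\aa rding's inequality and duality. Without this, your induction step from $\eta_j$ to $\eta_{j+1}$ does not close; the paper also makes explicit that one must select the unique iterate orthogonal to $\ker\Delta_{\varphi_b}$ and verify that the right-hand side lies in $\operatorname{im}\diff^{*_{\varphi_b}}$ so that this choice is well-defined.

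A secondary remark: your ``main obstacle'' paragraph about degeneration of $g_{\varphi_b^t}$ as $t\to 0$ is misdirected. The theorem fixes $t\leq\kappa$ and asks only for continuity in~$b$; the $t$-uniformity is already built into hypotheses~(i)--(iii) exactly as in Joyce's single-fibre version, and plays no role in the Green's-operator continuity argument. Flagging that as the principal difficulty obscures where the real work lies.
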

\begin{proof}
    The existence of such a section $\eta$ and a function $f$ follows from a pointwise application of the proof of Theorem G2 in Joyce, which relies on a Banach fixpoint iteration of equation (\ref{eq: Solution equation for G2 form graph}) as described in the proof of \cite[Proposition 11.8.1]{Joyce2000SpecialHolonomy}.
    We claim that if we use in each iteration step the unique solution that is perpendicular to the kernel $\Delta_\varphi$, then the resulting section $\eta$ will depend continuously on the base parameter.
    Since continuity is a local property, we may assume that the bundle is trivial, so that $\varphi$, $\psi$, and all $\eta$'s will be maps.

    The iteration step begins with $\eta_0 = 0$ and $f_0 = 0$, which are certainly continuous sections.
    Thus, it suffices to prove that if $\eta_j$ and $f_j$ are continuous in the base parameter $b$, then $\eta_{j+1}$ is so, too, and hence $f_{j+1}$ as well.

    By construction, the forms $\diff^{\ast_\varphi}\psi + \diff^{\ast_\varphi}(f_k\psi) + \ast \diff F(\diff \eta_j)$ lie in the image of $\diff^{\ast_\varphi}$, %%% it should be read as \ast(d(F(d\eta))), which trivially lies in the image of d^\ast because we can substitute 1 by \ast \ast^{-1} 
    which is perpendicular to the harmonic forms by Hodge theory.
    Theorem \ref{Theorem - Continuity Laplace Operators} below now implies that  $\eta_{j+1}$ is continuous because the restriction of the invertible operator $\mathrm{pr}_{\ker \Delta_\varphi} + \Delta_\varphi$, which depends continuously on $\varphi$, to the orthogonal complement of $\ker \Delta_\varphi$ (with respect to the inner product induced by $g_\varphi$) is just $\Delta_\varphi$.
\end{proof}

We will need to study the dependency of solutions of Hodge-Laplace equations under the underlying metric.
To this end, recall from \cite[Observation 3.6]{ebert2017indexdiff} that, for any two Riemannian metrics $g_1, g_2 \in \Riem(M)$, there is a vector bundle homomorphism\footnote{In notation of \cite{ebert2017indexdiff}, we would write $\mathcal{G}_{g_1,g_2} = T_{(g_1,\Lambda^{3,\vee}g_1),(g_2,\Lambda^{3,\vee}g_2)}$.} $\mathcal{G}_{g_1,g_2} \in \mathrm{End}(\Lambda^3 T^\vee M)$ that induces a unitary isomorphism $L^2(\Lambda^3 T^\vee M;g_1) \rightarrow L^2(\Lambda^3 T^\vee M, g_2)$, where these Hilbert spaces carry the canonical $L^2$-inner products arising from the Riemannian metrics.
Moreover, for each metric $g_0$, one can construct these homomorphisms such that the map $\mathcal{G}_{\placeholder,g_0}\colon \Riem(M) \rightarrow \mathrm{End}(\Lambda^3T^\vee M)$ is smooth.

The first ingredient is a technical lemma that the kernel of a self-adjoint operator depends continuously on the operator as long as the dimension does not change.
The proof is quite similar to the proof of \cite[Lemma III.7.2]{LawsonMichelsonSpin}.
\begin{lemma}\label{Lemma - Kernel Bundle}
    Let $H$ be a Hilbert space and denote by $\mathrm{Fred}(H)^{\mathrm{s.a}}$ the space of all self-adjoint Fredholm operators on $H$ equipped with the operator norm topology.
    Define $F_j \subseteq \mathrm{Fred}^{\mathrm{s.a}}(H)$ to be the closed subspace of all operators $A$ satisfying $\dim \ker A \geq j$.

    Then the map induced by the projection to the first component yields a vector bundle 
    \begin{equation*}
        \mathrm{ker}_j := \{ (T,u) \, : \, Tu = 0 \} \subseteq F_j\setminus F_{j+1} \times H \xrightarrow{\mathrm{pr}_1} F_j \setminus F_{j+1}.
    \end{equation*}
    In particular, the map $F_j \setminus F_{j+1} \rightarrow B(H)$ that assigns to an operator the orthogonal projection onto its kernel is continuous.
\end{lemma}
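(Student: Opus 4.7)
The plan is to realise the kernel of $T \in F_j \setminus F_{j+1}$ as the image of a Riesz (spectral) projection associated with a small contour around $0$, so that it depends holomorphically (and in particular continuously) on the operator. Fix $T_0 \in F_j \setminus F_{j+1}$. Since $T_0$ is self-adjoint Fredholm with $\dim \ker T_0 = j$, we have the orthogonal decomposition $H = \ker T_0 \oplus \mathrm{im}\, T_0$ with $T_0|_{\mathrm{im}\, T_0}$ invertible, so $0$ is an isolated point of $\sigma(T_0)$. Choose $r>0$ with $\sigma(T_0) \cap \overline{D_r(0)} = \{0\}$, let $\gamma$ denote the positively oriented circle $\{|z|=r\}$, and for every $T$ in a neighbourhood $U \subseteq \mathrm{Fred}^{\mathrm{s.a}}(H)$ of $T_0$ small enough that $\gamma \cap \sigma(T) = \emptyset$, I would define
\begin{equation*}
    P(T) := \frac{1}{2\pi\iu}\oint_\gamma (z - T)^{-1}\, dz.
\end{equation*}
The second resolvent identity $(z-T)^{-1} - (z-T_0)^{-1} = (z-T)^{-1}(T - T_0)(z-T_0)^{-1}$, together with the compactness of $\gamma$, shows that $P \colon U \to B(H)$ is continuous. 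Since $\gamma$ is symmetric about the real axis and $T$ is self-adjoint, one checks that $P(T)$ is self-adjoint, hence the orthogonal projection onto the spectral subspace $V(T) \subseteq H$ spanned by eigenvectors for eigenvalues of $T$ inside $\gamma$.

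Next I would shrink $U$ further so that $\dim V(T) = \dim V(T_0) = j$ on $U$; this is possible because two orthogonal projections close in operator norm necessarily have the same rank. For $T \in U \cap (F_j \setminus F_{j+1})$, the inclusion $\ker T \subseteq V(T)$ is obvious (as $0$ lies inside $\gamma$), and $\dim \ker T = j = \dim V(T)$, so $\ker T = V(T)$. Hence on this locally closed stratum $P(T)$ is the orthogonal projection onto $\ker T$, which immediately gives continuity of the map $T \mapsto \mathrm{pr}_{\ker T}$ and proves the final assertion of the lemma.

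For the vector bundle structure I would use the local chart $\Phi \colon (U \cap (F_j \setminus F_{j+1})) \times \ker T_0 \to \mathrm{ker}_j$ given by $\Phi(T,v) := (T,P(T)v)$. For each such $T$, the restriction $P(T)|_{\ker T_0}$ is a continuous linear map between the two $j$-dimensional Hilbert spaces $\ker T_0$ and $\ker T$, equal to the identity at $T = T_0$; by continuity of invertibility in $\mathrm{Hom}(\ker T_0, H)$ composed with $P(T)$, it remains an isomorphism onto $\ker T$ on a possibly smaller neighbourhood, yielding a local trivialisation compatible with the projection to the base. The main subtlety to monitor is that the Riesz projection generally captures all sufficiently small eigenvalues of $T$, not just $0$; it is precisely the constancy of $\dim \ker T$ on $F_j \setminus F_{j+1}$ that forces the spectral subspace $V(T)$ to coincide with $\ker T$, and this is the step where the stratification into $F_j \setminus F_{j+1}$ is indispensable.
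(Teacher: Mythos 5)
Your proof is correct, but it follows a genuinely different route from the paper's. The paper proceeds by pure linear algebra: for a fixed base point $T_0$ it introduces the auxiliary map
\begin{equation*}
E_{T_0}(T)\colon \ker T_0 \oplus (\ker T_0)^\perp \to H,\qquad (w,v)\mapsto w + T(w) + T(v),
\end{equation*}
which at $T=T_0$ equals $\mathrm{pr}_{\ker T_0}+T_0$ and hence is invertible by self-adjointness of $T_0$. Openness of invertibility gives a neighbourhood $\mathcal{U}$ on which $E_{T_0}(T)$ is an isomorphism; since $E_{T_0}(T)$ visibly maps $\ker T$ into $\ker T_0$ (the $T$-terms cancel) and both have dimension $j$, it restricts to an isomorphism $\ker T\xrightarrow{\cong}\ker T_0$, yielding the trivialisation directly; the continuity of $T\mapsto\mathrm{pr}_{\ker T}$ is then obtained a posteriori from a local orthonormal frame of $\ker_j$. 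You instead use the Riesz spectral projection $P(T)=\frac{1}{2\pi \iu}\oint_\gamma(z-T)^{-1}\,dz$, continuity via the resolvent identity, self-adjointness to identify $P(T)$ with an orthogonal projection, rank-constancy of nearby projections, and the inclusion $\ker T\subseteq V(T)$ plus the dimension count on the stratum $F_j\setminus F_{j+1}$ to force $V(T)=\ker T$. Your route requires the holomorphic functional calculus but produces the orthogonal projection $\mathrm{pr}_{\ker T}=P(T)$ immediately (so the ``in particular'' statement is automatic), whereas the paper's argument is more elementary and closer in spirit to the Lawson--Michelsohn treatment it cites. Both are complete; you correctly flagged the one genuine subtlety of your approach, namely that the Riesz projection a priori captures all spectrum inside $\gamma$, and you correctly resolve it with the rank argument on the stratum.
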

\begin{proof}
    Pick $T_0 \in F_j \setminus F_{j+1}$ and define, for each $T \in F_j \setminus F_{j+1}$, the map
    \begin{equation*}
        E_{T_0}(T) \colon \ker T_0 \oplus \ker T_0^\perp \rightarrow H \qquad \qquad (w,v) \mapsto w + T(w) + T(v).
    \end{equation*}
    For each $T_0$, the map $B(H)^{\mathrm{s.a}} \rightarrow B(\ker T_0 \oplus \ker T_0^\perp, H)$ given by $T_0 \mapsto E_{T_0}(T)$ is continuous with respect to the norm topologies, and, furthermore, $E_{T_0}(T_0)$ is an isomorphism.

    Since invertibility is an open condition, we find an open neighbourhood $\mathcal{U}$ of $T_0$ such that the restriction of $E_{T_0}$ to $\mathcal{U}$ takes values in the subspace of all isomorphisms.
    By construction, we have $E_{T_0}(T)(\ker T) \subseteq \ker T_0$.
    For degree reasons, we even have $E_{T_0}(T)(\ker T) = \ker T_0$.
    The local trivialisation around $T_0$ is now given by
    \begin{equation*}
        \mathrm{pr}_1 \times {E_{T_0}}: \mathrm{ker}_j \rightarrow \mathcal{U} \times \ker T_0,
    \end{equation*}
    so $\mathrm{ker}_j$ is vector bundle of rank $j$.

    For the second statement, we use that $\ker_j$ is locally trivial to pick a local orthonormal frame $(f_1,\dots,f_j)\colon \mathcal{U} \rightarrow \ker_j$.
    The map that assigns to $T \in F_j \setminus F_{j+1}$ the orthogonal projection onto its kernel can be written on $\mathcal{U}$ as 
    \begin{equation*}
        U \mapsto \sum_{\alpha = 1}^j \langle f_\alpha(U) , \placeholder \rangle f_\alpha(U),
    \end{equation*}
    which is clearly continuous.
\end{proof}

We denote by $L^2_k(\Lambda ^3 T^\vee M)$ the Sobolev space of regularity $k$ (and H\"older power $2$).
Each Riemannian metric $g$ induces a canonical Hilbert space structure on $L_k^2(\Lambda^3T^\vee M)$.
While the specific inner product depends crucially on the chosen Riemannian metric $g$, the underlying topological vector space does not because $M$ is closed.
In other words, two different choices of Riemannian metrics, give rise to equivalent norms, so we abbreviate the underlying topological vector space simply with $L_k^2$.

\begin{theorem}\label{Theorem - Continuity Laplace Operators}
    For each $k \in \R$, the following map is well defined and continuous: 
    \begin{align*}
        \Riem(M) &\rightarrow \mathrm{Isom}(L_k^2,L_{k-2}^2) \subseteq \mathrm{Hom}(L_k^2,L_{k-2}^2),\\
        g &\mapsto \mathrm{pr}_{\ker \Delta_g} + \Delta_g.  
    \end{align*}
    Here, $\mathrm{pr}_{\ker \Delta_g}$ denotes the orthogonal projection to the kernel with respect to the $L_0^2 = L^2$-inner product induced by $g$.
\end{theorem}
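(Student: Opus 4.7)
The plan is to split the theorem into three sub-statements: (a) continuity of $g \mapsto \Delta_g$ as a map into $\Hom(L_k^2, L_{k-2}^2)$, (b) pointwise invertibility of $\mathrm{pr}_{\ker \Delta_g} + \Delta_g$, and (c) continuity of the orthogonal projection $g \mapsto \mathrm{pr}_{\ker \Delta_g}$. Taken together, (a) and (c) give continuity of the sum, and (b) guarantees that it takes values in the open subset $\mathrm{Isom}(L_k^2, L_{k-2}^2) \subseteq \Hom(L_k^2, L_{k-2}^2)$.

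Statement (a) is routine. In any coordinate chart, $\Delta_g = \diff\,\diff^{\ast_g} + \diff^{\ast_g}\diff$ is a second-order differential operator whose coefficients are polynomial in the entries of $g$, $g^{-1}$, $\sqrt{\det g}$ and the first derivatives of $g$. Since pointwise multiplication by a smooth tensor is continuous on Sobolev spaces over the closed manifold $M$, the map $g \mapsto \Delta_g$ is continuous in the operator norm. For statement (b), Hodge theory applied to the self-adjoint elliptic operator $\Delta_g$ yields an $L^2(g)$-orthogonal decomposition $L_k^2 = \ker \Delta_g \oplus (\ker \Delta_g)^\perp$ (with $\ker \Delta_g$ consisting of smooth harmonic forms, independent of $k$). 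On the first summand $\mathrm{pr}_{\ker \Delta_g} + \Delta_g$ acts as the identity; elliptic regularity implies that on the second summand $\Delta_g$ restricts to an isomorphism $(\ker \Delta_g)^\perp \cap L_k^2 \to (\ker \Delta_g)^\perp \cap L_{k-2}^2$. Hence the sum is an isomorphism.

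The main step is statement (c), and the main obstacle is that the orthogonality defining $\mathrm{pr}_{\ker \Delta_g}$ is taken with respect to the varying $L^2(g)$-inner product, whereas Lemma \ref{Lemma - Kernel Bundle} requires bounded self-adjoint operators on a single fixed Hilbert space. I would fix a reference metric $g_0$ and use the unitary transport isomorphisms $\mathcal{G}_{g,g_0}\colon L^2(\Lambda^3 T^\vee M; g) \to L^2(\Lambda^3 T^\vee M; g_0)$ to conjugate $\Delta_g$ into the self-adjoint operator $\tilde\Delta_g := \mathcal{G}_{g,g_0}\,\Delta_g\,\mathcal{G}_{g,g_0}^{-1}$ on the fixed Hilbert space $L^2(\Lambda^3 T^\vee M; g_0)$. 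Its spectrum coincides with that of $\Delta_g$, and in particular $\dim \ker \tilde\Delta_g = b_3(M)$ is independent of $g$. To obtain a family of \emph{bounded} operators, I would pass to the resolvent
\begin{equation*}
R_g := (\mathrm{id} + \tilde\Delta_g)^{-1} \in \mathrm{End}\bigl(L^2(\Lambda^3 T^\vee M; g_0)\bigr),
\end{equation*}
which is bounded, self-adjoint and compact; continuity of $g \mapsto \tilde\Delta_g$ as a map $\Riem(M) \to \Hom(L_2^2, L^2)$ (established in (a), combined with smooth dependence of $\mathcal{G}_{g,g_0}$ on $g$) yields continuity of $g \mapsto R_g$ in operator norm. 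Now $\mathrm{id} - R_g$ is a continuous family of bounded self-adjoint Fredholm operators whose kernels coincide with $\ker \tilde\Delta_g$ and hence have the constant dimension $b_3(M)$. Lemma \ref{Lemma - Kernel Bundle} therefore delivers continuity of $g \mapsto \mathrm{pr}_{\ker \tilde\Delta_g}$ in operator norm, and conjugating back by $\mathcal{G}_{g,g_0}$ gives continuity of $g \mapsto \mathrm{pr}_{\ker \Delta_g}$. Combining (a), (b), and (c) completes the proof.
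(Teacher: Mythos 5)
Your decomposition into (a)--(c) is correct in spirit, and (a) and (b) are fine. In (c), passing to the resolvent $R_g = (\mathrm{id} + \tilde\Delta_g)^{-1}$ and then applying Lemma \ref{Lemma - Kernel Bundle} to $\mathrm{id} - R_g$ is literally the paper's move: $\mathrm{id} - R_g = \tilde\Delta_g(1+\tilde\Delta_g)^{-1}$ is exactly the operator $B_g$ the paper uses. So the first half of (c) reproduces the paper's argument and yields continuity of $g \mapsto \pr_{\ker\Delta_g}$ in the operator norm on $L^2 = L_0^2$.

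The gap is that you then declare the proof finished, as though continuity in $\mathrm{End}(L^2)$ were the same as continuity of $g \mapsto \pr_{\ker\Delta_g}$ in $\mathrm{Hom}(L_k^2, L_{k-2}^2)$ for all real $k$. It is not. Each individual projection $\pr_{\ker\Delta_g}$ is indeed bounded $L_k^2 \to L_{k-2}^2$ for every $k$ (the kernel consists of smooth forms), but this says nothing about the \emph{dependence} on $g$ in the stronger operator norms. For $0 \le k \le 2$ the factorisation $L_k^2 \hookrightarrow L^2 \xrightarrow{\pr} L^2 \hookrightarrow L_{k-2}^2$ does promote the $L^2$-continuity, but for $k > 2$ one must control the higher Sobolev norms of the harmonic basis locally uniformly in $g$, and for $k < 0$ one cannot even restrict to $L^2$. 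The paper closes this gap with a bootstrapping argument: combining G\aa rding's inequality for $\Delta_{g_0}$ with continuity of $g \mapsto \Delta_g$ in $\mathrm{Hom}(L_{k+2}^2, L_k^2)$ and the already-known continuity of $g \mapsto \pr_{\ker\Delta_g}$ in $\mathrm{End}(L_k^2)$, one derives the estimate (\ref{eq: Continuity projection to kernel}) and deduces continuity in $\mathrm{End}(L_{k+2}^2)$; induction plus the observation $\|\cdot\|_k \le \|\cdot\|_0$ for $k \in (-2,0)$ covers all non-negative $k$, and the negative range follows by duality $L_k^2 \cong (L_{-k}^2)'$. This step is essential and must be supplied; without it the theorem is established only for $k \in [0,2]$.
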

\begin{proof}%\Thorsten{The proof needs to be modified. It is not clear at the moment that the realisation map is continuous if $p \neq 2$. The question now is: Do we need the result in this generality?}
    We start with the proof that the map $\Riem(M) \rightarrow \mathrm{Hom}(L_k^2,L_{k-2}^2)$ is continuous.
    Expressing the Hodge-Laplace operator on $3$-forms in local coordinates shows that the map $\Riem(M) \rightarrow \Psi\mathrm{DO}^2(\Lambda^3T^\vee M)$ is continuous, where the target carries the Fr\'echet structure of \cite[p. 123f]{atiyah1971indexIV}. %, see also \cite[p.55]{Kumano-Go1981Pseudo} for the Euclidean case.% because the total symbol $p = p^{\Delta_g}$ satisfies
    %\begin{equation*}
    %    p^{\Delta_g} = ???. %%% see, for example, the book of Juergen Jost for a formula of d^\ast. Alternatively, Bochner formular and Connection Laplacian.
    %\end{equation*}
    Furthermore, the realisation map $\Psi\mathrm{DO}^{n}(\Lambda^3 T^\vee M) \rightarrow \mathrm{Hom}(L_k^2,L_{k-n}^2)$ that interprets a pseudo differential operator as a bounded operator between the corresponding Sobolev spaces is continuous for all $k \in \R$ and $n \in \Z$ if the domain carries the Fr\'echet topology and the target the norm topology.
    It follows that the map $\Delta \colon \Riem(M) \rightarrow \mathrm{Hom}(L_k^2,L_{k-2}^2)$ is continuous. 
   
    The map $g \mapsto \mathcal{G}_{g,g_0} \circ \Delta_g \circ \mathcal{G}_{g,g_0}^{-1}$ takes values in unbounded operators on $L^2(\Lambda^3 T^\vee M, g_0)$ that are self-adjoint with respect to $\langle \placeholder, \placeholder \rangle_{g_0}$ and whose domain is $L_2^2$.
    By \cite[Proposition 1.7]{Nicolaescu2007Fredholm} the map
    \begin{equation*}
       \Riem(M) \rightarrow B(L^2(M,g_0)) \qquad \text{given by} \qquad   g \mapsto \frac{\mathcal{G}_{g,g_0} \circ \Delta_g \circ \mathcal{G}_{g,g_0}^{-1}} { 1 + \mathcal{G}_{g,g_0} \circ \Delta_g \circ \mathcal{G}_{g,g_0}^{-1} } =: B_g
    \end{equation*}
    is continuous.

    Each $B_g$ is a bounded, self-adjoint  operator that has the same kernel as $\mathcal{G}_{g,g_0} \circ \Delta_g \circ \mathcal{G}_{g,g_0}^{-1}$.
    By Hodge theory, the dimension of  $\ker B_g \cong \ker \Delta_g \cong H^3(M)$ is independent of the underlying Riemannian metric.
    Lemma \ref{Lemma - Kernel Bundle} implies that the map $g \mapsto \mathrm{pr}_{\ker B_g} = \mathcal{G}_{g,g_0} \circ \mathrm{pr}_{\ker \Delta_g} \circ \mathcal{G}_{g,g_0}^{-1}$ is continuous, so the assignment $g \mapsto \mathrm{pr}_{\ker \Delta_g}$ yields a continuous map $\mathrm{Riem}(M) \rightarrow \mathrm{Hom}(L_0^2,L_0^2)$.

    We will prove inductively that $\pr_{\ker \Delta_{(\placeholder)}} \colon \Riem(M) \rightarrow \mathrm{Hom}(L_k^2,L_k^2)$ is continuous by showing that it is continuous at each point $g_0 \in \Riem(M)$ using a boot-strapping argument.
    To this end, first observe that elliptic regularity implies that $\ker \Delta_g$ consists of smooth sections only, so that $\pr_{\ker \Delta_g}$ is an infinite smoothing operator.

    %In the following, we will use the norms on $L^2_k$ that are induced by $g_0$.
    Pick another metric $g_1 \in \Riem(M)$ and abbreviate $\pr_{\ker \Delta_{g_j}}$ to $\pr_j$.
    G\aa rdings inequality together with some triangle inequalities yield
    \begin{align*}
        \begin{split}
            &\qquad \qquad \ ||\pr_{1}(u) - \pr_{0}(u) ||_{k+2}\\
            &\leq C(g_0) \Bigl( ||(\Delta_{g_1} - \Delta_{g_0})(\pr_1(u) - \pr_0(u))||_{k} \\
            &\qquad \quad  + ||(\Delta_{g_1} - \Delta_{g_0}) \pr_0(u)||_k + ||\pr_1(u) - \pr_0(u)||_k   \Bigr)
        \end{split} \\
        &\leq C(g_0) \Bigl( \varepsilon(g_1,g_0)||\pr_1(u) - \pr_0(u)||_{k+2} + \varepsilon(g_1,g_0)||\pr_0(u)||_{k+2} + \tilde{\varepsilon}(g_1,g_0)||u||_k \Bigr).
    \end{align*}
    Since $\Delta_{(\placeholder)} \colon \Riem(M) \rightarrow \mathrm{Hom}(L_{k+2}^2, L_{k}^2)$ and $\pr_{\ker \Delta_{(\placeholder)}} \colon \Riem(M)\rightarrow \mathrm{End}(L_k^2)$ are continuous, we can find a small neighbourhood of $g_0$ to make $\tilde{\varepsilon}(g_1,g_0)$ and $\varepsilon(g_1,g_0)$ as small as we want so that the previous inequality can be rewritten as
    \begin{align}\label{eq: Continuity projection to kernel}
        ||\pr_{1}(u) - \pr_{0}(u) ||_{k+2} \leq C(g_0) \frac{\varepsilon(g_1,g_0)||\pr_0(u)||_{k+2} + \tilde{\varepsilon}(g_1,g_0) ||u||_k}{1 - C(g_0)\cdot \varepsilon(g_1,g_0)},
    \end{align}
    from which we can deduce that $g \mapsto \pr_{\ker \Delta_g}$ is continuous at $g_0$.

   A priori, the above argument would only prove the statement for the case that $k$ is a non-negative even integer, but if $k \in (-2,0)$, we can use $||\pr_1(u) - \pr_0(u)||_{k} \leq ||\pr_1(u) - \pr_0(u)||_0$, so that we may replace $||\placeholder||_k$ by $||\placeholder||_0$ in (\ref{eq: Continuity projection to kernel}).
   Hence we get continuity of $\pr_{\ker \Delta_{(\placeholder)}}$ in the range $(0,2)$ and by induction for all non-negative real numbers.
   
   The statement for negative $k$ now follows from duality $L_k^2 \cong (L_{-k}^2)'$.
\end{proof}

Applying Theorem \ref{Theorem - Family Joyce Perturbation} to the forms $\bm{\varphi}^t$ and $\bm{\psi}^t$ we deduce that $E_M$ has a section of fibrewise torsion-free $\GTwo$-forms.
\begin{theorem}\label{Theorem - Fibre torsion-free on Joyce Bundle}
    There are constants $\varepsilon, K > 0$  such that, for all $t < \varepsilon$ and all $\bm{\varphi}^t$ %\in \Gamma(E_M,\Lambda^{3,+} T^{\vertical,\vee}E_M)$
    from (\ref{eq: family preperturbed GTwo form}), there exists $\tilde{\bm{\varphi}}^t \in \Gamma^0(E_M,\Lambda^{3,+} T^{\vertical,\vee}E_M)$ that is 
    \begin{itemize}
        \item smooth in fibre direction, in other words, each $\tilde{\bm{\varphi}}^t_{[q]}$ is smooth,
        \item each $\tilde{\bm{\varphi}}^t_{[q]}$ is torsion-free, and
        \item satisfies $||\tilde{\bm{\varphi}}^t - {\bm{\varphi}}^t||_{C^0} \leq K t^{1/2}$.
    \end{itemize}
    Moreover, the forms $\Tilde{\bm{\varphi}}^t$ can be chosen such that they depend continuously on $t$.
\end{theorem}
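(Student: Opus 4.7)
The strategy is to apply Theorem \ref{Theorem - Family Joyce Perturbation} directly to the bundle $E_{M,\mathsf{B}} \to \CP^1$ with the families $\bm{\varphi}^t$ and $\bm{\psi}^t$ from equations (\ref{eq: family preperturbed GTwo form})--(\ref{eq: family preperturbed error}). By construction each $\bm{\varphi}^t_{[q]}$ is closed, and the fibrewise identity $\diff^{\ast_{\bm{\varphi}^t_{[q]}}}\bm{\psi}^t_{[q]} = \diff^{\ast_{\bm{\varphi}^t_{[q]}}}\bm{\varphi}^t_{[q]}$ follows from the same computation as in the unparametrised setting of Section \ref{Section - GeneralisedKummer}, since $\bm{\varphi}^t_{[q]}$ is constructed by exactly the Kummer recipe after rotating the distinguished hyperkähler direction from $\iu$ to $q^{-1}\iu q$.

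The first step is to verify the analytic hypotheses (i)--(iii) of Theorem \ref{Theorem - Family Joyce Perturbation} with constants $A_1, A_2, A_3$ independent of $[q] \in \CP^1$. As noted in the paragraph preceding the theorem, each fibre is of the form produced by the generalised Kummer construction, so Theorem \ref{Theorem - Joyce's form estimation} applies fibrewise and supplies constants $A_j([q])$. Because the whole construction depends continuously on the admissible choice $q^{-1}\iu q$ of the distinguished direction, the $A_j([q])$ vary continuously with $[q]$, and compactness of $\CP^1$ lets us pass to $A_j := \sup_{[q]\in \CP^1} A_j([q]) < \infty$.

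With these uniform constants, Theorem \ref{Theorem - Family Joyce Perturbation} produces, for each $t \in (0,\kappa]$, a fibrewise smooth, fibrewise torsion-free section $\tilde{\bm{\varphi}}^t \in \Gamma^0(E_{M,\mathsf{B}}, \Lambda^{3,+} T^{\vertical,\vee} E_{M,\mathsf{B}})$ that is continuous over $\CP^1$ and satisfies the $C^0$-estimate $\|\tilde{\bm{\varphi}}^t - \bm{\varphi}^t\|_{C^0} \leq K t^{1/2}$. This handles everything except the continuity in $t$.

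To secure continuity in $t$, observe that $\tilde{\bm{\varphi}}^t$ is built from $\bm{\varphi}^t$ by the Banach fixed-point iteration solving equation (\ref{eq: Solution equation for G2 form graph}), with solutions chosen orthogonal to $\ker\Delta_{\bm{\varphi}^t_{[q]}}$. The underlying operators $\pr_{\ker\Delta_g} + \Delta_g$ depend continuously on $g$ by Theorem \ref{Theorem - Continuity Laplace Operators}, and $t \mapsto g_{\bm{\varphi}^t}$ is continuous on $(0,\varepsilon)$. Running the same inductive argument used to establish continuity in $[q]$ for the family Theorem G2, now with the joint parameter $(t,[q]) \in (0,\kappa] \times \CP^1$, yields the required continuous $t$-dependence. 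I expect the main obstacle to be purely bookkeeping: ensuring that each step of the fixed-point iteration, together with the orthogonal projection onto the kernel of the Laplacian, depends continuously on $(t,[q])$ uniformly on compact subsets — a matter settled by combining Theorem \ref{Theorem - Continuity Laplace Operators} with the uniform convergence of the iteration. No new analytic estimates beyond those already in place should be needed.
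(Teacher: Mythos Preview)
Your proposal is correct and follows essentially the same approach as the paper, which deduces the result directly from Theorem~\ref{Theorem - Family Joyce Perturbation} applied to $\bm{\varphi}^t$ and $\bm{\psi}^t$ after noting that the hypotheses of Theorem~\ref{Theorem - Joyce's form estimation} hold fibrewise. One minor simplification: the paper observes that the constants $A_j$ are literally the same for every $[q]$ (the fibres being isometric via the rotation $\iu \mapsto q^{-1}\iu q$), so your compactness argument, while sound, is unnecessary.
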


\section{Topological Properties of the Bundles}\label{Section - Resolution Detection}

We will now show that the bundles $E_{M,\mathsf{B}} \rightarrow \CP^1$ constructed in the previous section are pair-wise different and then derive consequences for the moduli spaces of torsion-free $\GTwo$-structures from that. 
To this end, for each component of the singularity set of $T^7/\Gamma$ that belongs to $\mathsf{A}$, the set that parametrises all components $a \in \pi_0(S)$ of the fixpoint set with $F_a = \{1\}$, fix a point $b \in T^3$ inside that component. 
These singularities have a local neighbourhood isometrically isomorphic to $T^3 \times B_\zeta^4(0)/\Z_2 \subseteq T^3 \times \Quat /\Z_2$.
For the components that belong to the subset $\mathsf{B}$, the subset that parametrises those singularities that are resolved with the twisted Eguchi-Hanson family, we consider the following embeddings induced by the fibrewise zero sections
\begin{equation}\label{eq - inclusion twisted core spheres}
   \iota_b \colon \CP^1 \times \CP^1 \hookrightarrow \mathcal{EH}_{<\zeta} \hookrightarrow T^3 \times \mathcal{EH}_{< \zeta} \hookrightarrow E_{M,\mathsf{B}}, \qquad  ([p],[q]) \mapsto (b, [q^{-1}p,[q],0]).
\end{equation}
If the component does not belong to $\mathsf{B}$, we use the trivial Eguchi-Hanson bundle $EH \times \CP^1$ instead, in which case the map is then given by
\begin{equation}\label{eq - inclusion untwisted core spheres}
    \iota_b \colon \CP^1 \times \CP^1 \hookrightarrow EH_{<\zeta} \times \CP^1 \hookrightarrow T^3 \times EH_{<\zeta} \times \CP^1 \hookrightarrow E_{M,\mathsf{B}}, \   \quad ([p],[q]) \mapsto (b,[p,0],[q]).
\end{equation}
These four dimensional submanifolds will be used in an essential way in the proof of our key results.

\begin{lemma}\label{Lemma - Integral along twisted core spheres}
    The fibre-wise zero sections $\iota_b \colon \CP^1 \times \CP^1 \hookrightarrow E_{M,\mathsf{B}}$ satisfy
    \begin{equation*}
        p_1\bigl( T^{\vertical}E_{M,\mathsf{B}} \bigr) \cap \iota_{b\, \ast} [\CP^1 \times \CP^1] = \begin{cases}
            -8, & \text{ if } b \in \mathsf{B}, \\
            0, & \text{ if } b \notin \mathsf{B}.
        \end{cases}
    \end{equation*}
\end{lemma}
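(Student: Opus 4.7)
The strategy is to use naturality of characteristic classes to reduce the computation to identifying $\iota_b^* T^\vertical E_{M,\mathsf{B}}$ as a real vector bundle on $\CP^1 \times \CP^1$, and then read off $p_1$ via Lemma \ref{Lemma - Char Clases Eguchi Hanson family}. Concretely, by naturality and the projection formula,
\begin{equation*}
p_1\bigl(T^\vertical E_{M,\mathsf{B}}\bigr) \cap \iota_{b,*}[\CP^1 \times \CP^1] = \bigl\langle p_1\bigl(\iota_b^* T^\vertical E_{M,\mathsf{B}}\bigr), [\CP^1 \times \CP^1]\bigr\rangle,
\end{equation*}
so the task is purely bundle-theoretic.

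For $b \in \mathsf{B}$, the embedding (\ref{eq - inclusion twisted core spheres}) factors through the open subbundle $T^3 \times \mathcal{EH}_{<\zeta} \hookrightarrow E_{M,\mathsf{B}}$ coming from the right-hand side of (\ref{eq - bundle as pushout}). Over this open set $T^\vertical E_{M,\mathsf{B}}$ splits as $\mathrm{pr}_{T^3}^* T T^3 \oplus T^\vertical \mathcal{EH}$, with the first summand trivial because $T^3$ is parallelisable. Restricting to the fibrewise zero section $\CP^1 \times \CP^1 \hookrightarrow \mathcal{EH}$ and invoking the splitting used in the proof of Lemma \ref{Lemma - Char Clases Eguchi Hanson family}(ii) gives
\begin{equation*}
\iota_b^* T^\vertical E_{M,\mathsf{B}} \cong \underline{\R^3} \oplus \mathrm{pr}_1^* T\CP^1 \oplus \mathcal{EH}|_{\CP^1 \times \CP^1}.
\end{equation*}
The first two summands contribute nothing to $p_1$: the $TT^3$-factor is trivial, and $p_1(\mathrm{pr}_1^* T\CP^1) = \mathrm{pr}_1^* p_1(T\CP^1)$ lies in the image of $H^4(\CP^1) = 0$. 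Hence $p_1(\iota_b^* T^\vertical E_{M,\mathsf{B}}) = c_1(\mathcal{EH})^2 = \bigl(2(x-y)\bigr)^2 = -8xy$ by Lemma \ref{Lemma - Char Clases Eguchi Hanson family}(i), which pairs to $-8$ with the fundamental class of $\CP^1 \times \CP^1$.

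For $b \notin \mathsf{B}$, the corresponding singularity is resolved by the \emph{untwisted} family, so the embedding (\ref{eq - inclusion untwisted core spheres}) factors through the product $T^3 \times EH_{<\zeta} \times \CP^1 \hookrightarrow E_{M,\mathsf{B}}$. Over this set $T^\vertical E_{M,\mathsf{B}}$ is pulled back from $TT^3 \oplus TEH$ on $T^3 \times EH$ along the evident projection. Consequently $\iota_b^* T^\vertical E_{M,\mathsf{B}}$ is pulled back from the rank-$4$ bundle $TEH|_{\CP^1_{\mathrm{zero}}}$ over $\CP^1$ via projection to the first factor of $\CP^1 \times \CP^1$. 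Since $H^4(\CP^1) = 0$, its first Pontryagin class vanishes on dimensional grounds, and so does the pairing.

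The main obstacle is essentially bookkeeping: one has to unwind the pushout (\ref{eq - bundle as pushout}) carefully enough on a trivialising patch around $b$ to confirm that the vertical tangent bundle genuinely has the claimed product structure in each case and, in the twisted case, that the zero-section embedding used to define $\iota_b$ coincides (up to an identification) with the zero section of $\mathcal{EH}$ appearing in Lemma \ref{Lemma - Char Clases Eguchi Hanson family}. Once that is in place, the twisted computation collapses to a single application of Lemma \ref{Lemma - Char Clases Eguchi Hanson family}, while the untwisted case is immediate from degree reasons.
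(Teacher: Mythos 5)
Your proposal is correct and follows essentially the same route as the paper: both restrict to the open sets $T^3 \times \mathcal{EH}_{<\zeta}$ (resp.\ $T^3 \times EH_{<\zeta} \times \CP^1$) containing the image of $\iota_b$ and then invoke Lemma \ref{Lemma - Char Clases Eguchi Hanson family} together with the vanishing of $H^4(\CP^1)$. The only cosmetic difference is that you re-derive $p_1\bigl(T^\vertical\mathcal{EH}|_{\CP^1\times\CP^1}\bigr) = c_1(\mathcal{EH})^2 = -8xy$ from part (i) of that lemma via the splitting $\underline{\R}^3 \oplus \mathrm{pr}_1^\ast T\CP^1 \oplus \mathcal{EH}|_{\CP^1\times\CP^1}$, whereas the paper simply cites part (ii) directly.
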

\begin{proof}
    Since $T^3 \times \mathcal{EH}_{<\zeta}$ is an open subset of $E_{M,\mathsf{B}}$ that contains $\iota_b(\CP^1 \times \CP^1)$ if $b \in \mathsf{B}$, Lemma \ref{Lemma - Char Clases Eguchi Hanson family} implies that the integral of the first Pontryagin class of the vertical tangent bundle $p_1(T^\vertical E_{M,\mathsf{B}})$ over $\CP^1 \times \CP^1$ is given by
    \begin{align*}
        \langle p_1(T^\vertical E_{M,\mathsf{B}});\iota_{b\, \ast}[\CP^1\times \CP^1] \rangle &= \langle p_1(T^\vertical E_{M,\mathsf{B}}|_{\CP^1 \times \CP^1}); [\CP^1 \times \CP^1]) \\
        &= \langle p_1(\underline{\R}^3 \oplus T^\vertical \mathcal{EH}|_{\CP^1 \times \CP^1}); [\CP^1 \times \CP^1] \rangle \\
        &= \langle -8xy;[\CP^1 \times \CP^1]\rangle = -8.
    \end{align*}

    If $b \notin \mathsf{B}$, then $\iota_b(\CP^1 \times \CP^1)$ is contained in the open neighbourhood $T^3 \times EH \times \CP^1 \subseteq E_{M,\mathsf{B}}$. 
    Hence, the restriction of $T^\vertical E_{M,\mathsf{B}}$ to this neighbourhood is isomorphic to the tangent bundle of the fibre $T^3 \times EH$, which has a trivial Pontryagin class. 
    A similar calculation as before now implies the second result.
\end{proof}

%The next lemma can be thought of as a strengthended version of Joyce's Betti number formula given in \cite[p.~347]{Joyce1996CompactG2II}.

\begin{lemma}\label{Lemma - Fibre Inclusion Third Homology}
    Let $E_{M,\mathsf{B}} = E_M$ be a bundle of generalised Kummer constructions as defined in diagram \ref{eq - bundle as pushout}.
    Then each fibre inclusion $M \hookrightarrow E_M$ induces a monomorphism between their third integral homology groups.
    Furthermore, the images $H_4(\iota_a)([\CP^1 \times \CP^1])$ for all $a \in \mathsf{A}$ form a free subgroup in $H_4(E_M,\Z)$.
\end{lemma}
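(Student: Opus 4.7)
The plan is to analyse the Serre spectral sequence $\{E^r_{p,q}\}$ of the fibre bundle $M \hookrightarrow E_M \xrightarrow{p} \CP^1$. Since the base $\CP^1 = S^2$ is simply connected with homology concentrated in degrees $0$ and $2$, the only potentially nonzero differentials are the Wang differentials $d_2 \colon H_q(M) \to H_{q+1}(M)$. The first step is to show that every such $d_2$ vanishes, so that the spectral sequence collapses at $E^2$ and yields, for every $n$, a short exact sequence
\begin{equation*}
    0 \to H_n(M) \to H_n(E_M) \to H_{n-2}(M) \to 0.
\end{equation*}
The case $n = 3$ will directly give the desired monomorphism $H_3(M) \hookrightarrow H_3(E_M)$.

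To argue this vanishing I would localise: the pushout~\eqref{eq - bundle as pushout} realises $E_M \to \CP^1$ as trivial outside $\bigcup_{b \in \mathsf{B}} T^3 \times EH_{\leq \zeta}$, so the classifying map $\CP^1 \to B\Diff(M)$ factors through the subgroup of diffeomorphisms compactly supported in that neighbourhood. By naturality of the Serre spectral sequence applied to the inclusion of the restricted sub-bundle $T^3 \times \mathcal{EH}_{\leq \zeta} \hookrightarrow E_M$, the Wang differential $d_2^M$ is controlled by its local counterpart $d_2^{\mathrm{loc}}$ on the bundle $T^3 \times \mathcal{EH}_{\leq \zeta} \to \CP^1$. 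Since this local bundle is the product, in the $T^3$-direction, of the trivial $T^3$-bundle with the Eguchi--Hanson family $\mathcal{EH} \to \CP^1$, K\"unneth yields
\begin{equation*}
    d_2^{\mathrm{loc}}(\alpha \otimes \beta) = \pm\, \alpha \otimes d_2^{EH}(\beta),
\end{equation*}
reducing everything to the Wang differential $d_2^{EH} \colon H_q(EH) \to H_{q+1}(EH)$ of $\mathcal{EH} \to \CP^1$. But $H_*(EH) = H_*(\CP^1)$ is concentrated in even degrees, so $d_2^{EH}$ is identically zero, whence $d_2^M = 0$ and the claimed collapse follows.

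For the second assertion, I would apply the collapse in degree four to obtain
\begin{equation*}
    0 \to H_4(M) \to H_4(E_M) \xrightarrow{\rho} H_2(M) \to 0,
\end{equation*}
and combine it with naturality of the spectral sequence along $\iota_a$, which covers $\mathrm{pr}_2 \colon \CP^1 \times \CP^1 \to \CP^1$. Tracing the identifications shows that $\rho(\iota_{a*}[\CP^1 \times \CP^1])$ is exactly the class $[\CP^1_a] \in H_2(M;\Z)$ of the zero-section two-sphere of the $a$-th Eguchi--Hanson resolution inside the fibre $M$. These exceptional sphere classes are known to be linearly independent in $H_2(M;\Z)$ for Joyce--Kummer constructions---they are pairwise disjoint embedded spheres, each detected by pairing with the Poincar\'e dual of a transverse five-cycle supported near its own singularity component. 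Surjectivity of $\rho$ then lifts linear independence from $H_2(M)$ to $H_4(E_M)$, showing that the $\iota_{a*}[\CP^1 \times \CP^1]$ generate a free abelian subgroup of rank $|\mathsf{A}|$.

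The step I expect to require the most care is the localisation of $d_2^M$ to the twisted sub-bundle $T^3 \times \mathcal{EH}_{\leq \zeta}$. Although the slogan ``$d_2$ only sees where the bundle is twisted'' is transparent, a rigorous version means either tracking naturality through a Mayer--Vietoris decomposition of both $E_M$ and $M$, or else, dually, constructing explicit closed cocycle lifts in $\Omega^*(E_M)$ of each generator of $H^*(M)$. The dual approach should succeed because the generators in Joyce's description of $H^*(M)$ are either pulled back from $T^7/\Gamma$ (and so extend via the trivial part of the bundle) or of the form $\delta^i \wedge \omega$ with $\omega \in H^2(EH)$, and any such $\omega$ admits a lift to $H^2(\mathcal{EH}) = \Z^2$ by Lemma~\ref{Lemma - Char Clases Eguchi Hanson family}.
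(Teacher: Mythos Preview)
Your approach via the Serre spectral sequence of $M \hookrightarrow E_M \to S^2$ is genuinely different from the paper's, which works entirely with Mayer--Vietoris. The paper introduces the intermediate orbifold $N$ obtained from $T^7/\Gamma$ by resolving only the singularities \emph{not} in $\mathsf{A}$, together with the fibre-preserving resolving map $q\colon E_M \to N \times S^2$ assembled from $\bm\sigma$ and $\sigma$. Comparing the Mayer--Vietoris sequences of the pushouts defining $E_M$ and $N\times S^2$, one identifies $\ker H_k(q)$ with the injective image of $\bigoplus_a \ker H_k(\id\times\bm\sigma)$ (resp.\ $\ker H_k(\id\times\sigma\times\id)$), using K\"unneth and the observation that these kernels meet $\im H_k(\iota_2)$ trivially. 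Running the same analysis on a single fibre produces a map of short exact sequences, and the Five Lemma delivers both conclusions at once.

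The gap in your plan is exactly the step you flag. Naturality of the spectral sequence along $T^3\times\mathcal{EH}_{\le\zeta}\hookrightarrow E_M$ only forces $d_2^M$ to vanish on the image of $H_*(T^3\times EH)\to H_*(M)$; naturality along the trivially fibred complement only handles classes supported there. A general class in $H_*(M)$ is a Mayer--Vietoris combination of the two and is not covered by either naturality statement on its own, so the inference ``$d_2^{\mathrm{loc}}=0 \Rightarrow d_2^M=0$'' is not yet justified. Your proposed remedy of ``tracking naturality through a Mayer--Vietoris decomposition of both $E_M$ and $M$'' is essentially what the paper does, with one additional idea: since there is no bundle map $E_M \to M\times S^2$ to compare against, the paper replaces the target by $N\times S^2$, which \emph{does} receive a map via $q$. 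This turns the localisation into an honest map of long exact sequences and avoids the spectral-sequence bookkeeping altogether.

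For the freeness assertion you rely on the linear independence of the exceptional classes $[\CP^1_a]\in H_2(M;\Z)$, $a\in\mathsf{A}$, as ``known''. The paper does not quote this; it falls out of the same Mayer--Vietoris comparison applied fibrewise (the kernel of $\bigoplus_a H_2(\id\times\sigma)$ injects into $H_2(M)$). If you keep the spectral-sequence route you would need to supply this independently, for instance by appealing to Joyce's explicit description of $H^*(M)$. Also, a minor point: in your last step it is not surjectivity of $\rho$ that lifts independence from $H_2(M)$ to $H_4(E_M)$, merely that $\rho$ is a homomorphism.
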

\begin{proof}
    Because we will swap the order between the fibre $\CP^1$ and the base $\CP^1 $ in this proof, we will denote the latter by $S^2$ for the sake of clarity.
    
    Let $T^7/\Gamma$ be a flat orbifold from which $M$ is obtained by resolving all singularities.
    Let $N$ be the orbifold that is obtained by resolving only those components of the singular set that do \emph{not} have a tubular neighbourhood of the form $T^3 \times B_\zeta^4(0)/\Z_2$, in other words, those components that do not belong to $\mathsf{A}$.
    Note that $M$ is obtained from $N$ by resolving the remaining singularities in the usual manner and that $\pi_1(M) \cong \pi_1(N)$ by an iterated application of Seifert--van-Kampen.
    Let further $\hat{N}$ be the manifold with boundary that is obtained from $N$ by cutting out tubular neighbourhoods of the singular set components that belong to $\mathsf{A}$.
    By construction, we get a  pushout as in \ref{eq - bundle as pushout}.
    %\begin{equation*}
    % \xymatrix@C+1em{ \bigsqcup_{b \in \mathsf{B}}\left( T^3 \times \zeta\RP^3 \right) \times S^2 \ar@{^{(}->}[d] \ar[rr]^-{\id \times  \bm{\sigma}^{-1}(t^{-1} \cdot (\placeholder)) } && \bigsqcup_{b \in \mathsf{B}} T^3 \times \mathcal{EH}_{\leq \zeta t^{-1}} \ar[d] \\
    %\hat{N} \times S^2 \ar[rr] && E_{M}.}
%\end{equation*}

    The resolving map $\bm{\sigma} \colon \mathcal{EH} \rightarrow \Quat/\Z_2 \times S^2$, $\sigma \colon EH \rightarrow \Quat/\Z_2$, and the identity of $\hat{N}$ produce a fibre map $q \colon E_M \rightarrow N \times S^2$.   
    This map of pushout diagrams induces maps between the corresponding Mayer-Vietoris sequences of integral homology groups, in which we refer to the trivial fibre bundles over $S^2$ by underlining the fibres:
    \begin{equation*}
        \xymatrix{ H_k(\underline{T^3 \times \RP^3})^{{|\mathsf{A}|}}  \ar@{=}[d] \ar[r]^-{H_k(\iota_1)}_-{H_k(\iota_2)} & H_k(\underline{\hat{N}}) \oplus \Bigl(H_k(T^3 \times \mathcal{EH})^{\oplus |\mathsf{B}|} \oplus H_k(\underline{T^3\times EH})^{\oplus |\mathsf{A} - \mathsf{B}|} \Bigr) \ar[r] \ar@<-8.5ex>[d]^{H_k(\id \times \bm{\sigma})} \ar@<12.5ex>[d]^{H_k(\id \times \sigma \times \id)} \ar@{=}@<-24ex>[d] & H_k({E_M})  \ar[d]^{H_k(q)} \\
         H_k(\underline{T^3 \times \RP^3})^{{|\mathsf{A}|} } \ar[r]^-{H_k(\iota_1)}_-{H_k(\iota_2)} & H_k(\underline{\hat{N}}) \oplus \Bigl( H_k(\underline{T^3 \times \Quat/\Z_2})^{|\mathsf{B}|} \oplus H_k(\underline{T^3 \times \Quat/\Z_2})^{\oplus |\mathsf{A}-\mathsf{B}|} \Bigr)\ar[r] & H_k(\underline{N}).}
    \end{equation*}
    %\begin{equation*}
     %   \xymatrix{ \dots \ar[r]^-{\partial_{k+1}}  & H_k(\underline{T^3 \times \RP^3} ) \ar@{=}[d] \ar[rr]^-{(H_k(\iota_1),H_k(\iota_2))} && H_k(\underline{\hat{N}}) \oplus H_k(T^3 \times \mathcal{EH})^{\oplus |B|} \oplus \ar[r] \ar@<5.8ex>[d]^{H_k(\id \times \bm{\sigma})} \ar@{=}@<-8ex>[d] & H_k({E_M}) \ar[r]^{\partial_k} \ar[d]^{H_k(q)} & \dots \\
     %    \dots \ar[r]^-{\partial_{k+1}} & H_k(\underline{T^3 \times \RP^3}) \ar[rr]^-{(H_k(\iota_1),H_k(\iota_2))} && H_k(\underline{\hat{N}}) \oplus H_k(\underline{T^3 \times \Quat/\Z_2}) \ar[r] & H_k(\underline{N}) \ar[r]^{\partial_k} & \dots.}
    %\end{equation*}

   The Künneth formula implies that, for each component, the kernel of $H_k(\id \times \bm{\sigma})$ or $H_k(\id \times \sigma \times \id)$ is isomorphic to $H_{k-2}(T^3 \times S^2) \otimes H_2(\CP^1)$.
    
   The Künneth formula further implies that the image of $H_k(\iota_2)$ intersects the kernel of $H_k(\id \times \bm{\sigma})^{\oplus |\mathsf{B}|} \oplus H_k(\id \times \sigma \times \id)^{\oplus |\mathsf{A}-\mathsf{B}|}$ in a trivial fashion. 
   Indeed, the only elements that have a chance to get mapped to the kernel are of the form $x \times [\RP^3] \times {[1]}$, where $x \in H_{k-3}(T^3)$, because the restriction of $(\id_{T^3} \times \bm{\sigma}) \circ \iota_2$ and $(\id_{T^3} \times \sigma \times \id_{S^2}) \circ \iota_2$ to $T^3 \times S^2$ are the identity. 
   But $[\RP^3] \times {[1]}$ gets mapped to zero by $H_3(\bm{\sigma})$ or $H_3(\sigma \times \id_{S^2})$ for the target has trivial third integral homology.

   This observation has the following two consequences: 
    First, with slight abuse of notation, $H_k(\id \times \bm{\sigma})^{\oplus |\mathsf{B}|} \oplus H_k(\id \times \sigma \!\times \! \id)^{\oplus |\mathsf{A}-\mathsf{B}|}$ maps the image of $(H_k(\iota_1),H_k(\iota_2))$ to the image of $(H_k(\iota_1),H_k(\iota_2))$ in an isomorphic fashion.
    In particular, these two homomorphisms have the same kernel, so, equivalently, the boundary operators have the same image.
    Secondly, the kernel of  $H_k(\id \times \bm{\sigma})^{\oplus |\mathsf{B}|} \oplus H_k(\id \times \sigma \!\times \! \id)^{\oplus |\mathsf{A}-\mathsf{B}|}$ is mapped to $H_k(E_M)$ in an injective manner.
    A diagram chase implies that its image is precisely $\mathrm{ker}(H_k(q))$, and this observation implies the second part of the statement.

    The same line of reasoning applies to each fibre, so the fibre inclusion $i_x \colon M \hookrightarrow E_M$ (over $x \in S^2$) induces a map between two short exact sequences
    \begin{equation*}
        \xymatrix{\mathrm{ker}(H_k(\id \times \sigma))^{\oplus |\mathsf{B}|} \oplus \mathrm{ker}(H_k(\id \times \sigma))^{\oplus |\mathsf{A} - \mathsf{B}|}  \ar@{^{(}->}[r] \ar[d]^{H_k(i_x)^{\oplus |\mathsf{A} - \mathsf{B}|}}_{H_k(i_x)^{\oplus|\mathsf{B}|}} & H_k(M) \ar@{->>}[r]^{H_k(q_x)} \ar[d]^{H_k(i_x)} & H_k(N) \ar[d]^{H_k(\iota_x)}  \\
        \mathrm{ker}(H_k(\id \times \bm{\sigma}))^{\oplus |\mathsf{B}|} \oplus \mathrm{ker}(H_k(\id \times \sigma \times \id))^{\oplus |\mathsf{A} - \mathsf{B}|} \ar@{^{(}->}[r] & H_k(E_M) \ar@{->>}[r]^{H_k(q)}  & H_k(N \times S^2) .}
    \end{equation*}

    By the explicit description of the kernels, we deduce that $H_3(i_x)$ is an isomorphism between them. 
    The Künneth formula implies that the fibre inclusion $N \rightarrow N \times S^2$ also induces a monomorphism between the homology groups in degree $3$, so the first part of the statement follows from the Five Lemma.
\end{proof}

Any $M$-fibre bundle $E \rightarrow S^2$ can be represented by a continuous map $f_E \colon S^2 \rightarrow B\Diff(M)_0$ that is unique up to homotopy. 
Since $B\Diff(M)_0$ is simply connected, the classifying map gives rise to a unique\footnote{Since $B\Diff(M)_0$ is simply connected, pointed and unpointed homotopy classes are the same.} element $[f_E] \in \pi_2(B\Diff(M)_0)$. 

Let $\mathcal{S} \subseteq \pi_2(B\Diff(M)_0)$ be the subset whose elements classify those bundles $E \rightarrow S^2$ whose fibre inclusion induces a monomorphism between the third integral homology groups.

\begin{prop}\label{prop - Cohomology Invariant}
    The set $\mathcal{S}$ is a subgroup of $\pi_2(B\Diff(M)_0)$.
    Moreover, if $M$ is a manifold whose first Pontryagin class gives rise to a homomorphism $p_1(M) \cap \placeholder \colon H_4(M;\Z) \rightarrow m\Z$ for some $m \in \N$, then the map that assigns to an $M$-fibre bundle $E \rightarrow S^2$ the homomorphism $p_1(T^\vertical E) \cap \placeholder \colon H_4(E;\Z) \rightarrow \Z_m$ gives rise to a homomorphism
    $\mathcal{S} \rightarrow \mathrm{Hom}(H_2(M,\Z),\Z_m).$
\end{prop}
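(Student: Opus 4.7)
The plan is to realise $\mathcal{S}$ as the kernel of a natural transgression homomorphism, and then to define $\Phi_E$ via a lifting argument that exploits the divisibility hypothesis on $p_1(M)$. For any $M$-bundle $E\to S^2$, the Leray--Serre spectral sequence has only two nonzero columns ($p=0,2$), so its single nontrivial differential $d^2$ assembles into the Wang long exact sequence
\begin{equation*}
    \cdots \to H_k(M) \xrightarrow{i_\ast} H_k(E) \xrightarrow{\pi_!} H_{k-2}(M) \xrightarrow{d^2} H_{k-1}(M) \to \cdots.
\end{equation*}
Exactness at $H_3(M)$ identifies the condition $[f_E]\in\mathcal{S}$ with the vanishing of $d^2\colon H_2(M,\Z)\to H_3(M,\Z)$. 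Using $\pi_2(B\Diff(M)_0)\cong\pi_1(\Diff(M)_0)$, represent a class by a loop $\gamma\colon S^1\to\Diff(M)_0$ with evaluation $\Gamma\colon S^1\times M\to M$, $(s,x)\mapsto\gamma(s)(x)$. The Wang transgression then admits the explicit description $d^2_{[f_E]}(\alpha)=\Gamma_\ast([S^1]\times\alpha)$; this is visibly additive under concatenation of loops, so $[f_E]\mapsto d^2_{[f_E]}$ defines a group homomorphism $\pi_2(B\Diff(M)_0)\to\Hom(H_2(M,\Z),H_3(M,\Z))$ whose kernel is $\mathcal{S}$.

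To construct $\Phi_E$ for $[f_E]\in\mathcal{S}$, the vanishing of $d^2$ on $H_2(M)$ combined with exactness gives the short exact sequence
\begin{equation*}
    \mathrm{Im}\bigl(i_\ast\colon H_4(M,\Z)\to H_4(E,\Z)\bigr) \hookrightarrow H_4(E,\Z) \xrightarrow{\pi_!} H_2(M,\Z) \to 0.
\end{equation*}
For $\alpha\in H_2(M,\Z)$, pick any lift $\tilde\alpha\in H_4(E,\Z)$ with $\pi_!(\tilde\alpha)=\alpha$, and set $\Phi_E(\alpha):=\langle p_1(T^\vertical E),\tilde\alpha\rangle \bmod m\in\Z_m$. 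Two lifts differ by $i_\ast(\beta)$ for some $\beta\in H_4(M,\Z)$; since $T^\vertical E$ restricts along $i$ to $TM$, naturality of Pontryagin classes gives
\begin{equation*}
    \langle p_1(T^\vertical E), i_\ast(\beta)\rangle = \langle i^\ast p_1(T^\vertical E), \beta\rangle = \langle p_1(M),\beta\rangle \in m\Z,
\end{equation*}
by the divisibility hypothesis, so $\Phi_E(\alpha)$ is well-defined in $\Z_m$. Bilinearity of the Kronecker pairing makes $\Phi_E$ a homomorphism $H_2(M,\Z)\to\Z_m$.

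It remains to verify that $[f_E]\mapsto\Phi_E$ is itself a homomorphism. For $[f_1],[f_2]\in\mathcal{S}$, realise the bundle $E_{1+2}$ with $[f_{E_{1+2}}]=[f_1]+[f_2]$ as the pullback of $E_1\vee_M E_2\to S^2\vee S^2$ along the pinch map $S^2\to S^2\vee S^2$. Then $E_{1+2}$ decomposes into two hemispherical pieces $U_i\subset E_{1+2}$, each isomorphic to $E_i$ with a tubular neighbourhood of a fibre removed, glued along a trivialized collar $M\times S^1$. Given $\alpha\in H_2(M,\Z)$, represent lifts $\tilde\alpha_i\in H_4(E_i,\Z)$ by cycles transverse to the chosen fibre, restrict them to relative cycles $\tilde\alpha_i^\circ\in H_4(U_i,\partial U_i)$ with boundary $[S^1]\times\alpha\in H_3(M\times S^1)$, and glue with opposite orientations to obtain an absolute cycle $\tilde\alpha_{1+2}\in H_4(E_{1+2},\Z)$ that lifts $\alpha$ under $\pi_!$. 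Choosing connections on $T^\vertical E_i$ that are pulled back from the fibre over the collar, the Chern--Weil representative of $p_1(T^\vertical E_{1+2})$ has support in the two hemispheres, and integration splits to give
\begin{equation*}
    \langle p_1(T^\vertical E_{1+2}), \tilde\alpha_{1+2}\rangle = \langle p_1(T^\vertical E_1),\tilde\alpha_1\rangle + \langle p_1(T^\vertical E_2),\tilde\alpha_2\rangle,
\end{equation*}
and reducing modulo $m$ yields $\Phi_{E_{1+2}}=\Phi_{E_1}+\Phi_{E_2}$.

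The trickiest point will be the last step: one must produce the lifts $\tilde\alpha_i$ so that their oriented boundaries in the common collar cancel exactly, and verify that the glued cycle represents a lift of $\alpha$ under the Wang projection for $E_{1+2}$. Once this homological bookkeeping is arranged, the additivity follows from the locality of Pontryagin forms under the Chern--Weil construction and naturality of characteristic classes under the hemispherical inclusions $U_i\hookrightarrow E_{1+2}$.
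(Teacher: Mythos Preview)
Your argument is correct in outline and reaches the same conclusions, but it diverges from the paper's in two places worth noting.  For the subgroup claim, the paper also identifies membership in~$\mathcal{S}$ with the vanishing of $d^2_{2,2}$, but proves additivity by pure naturality of the Serre spectral sequence: since $E_1+E_2=c^*E_{1,2}$ for the pinch $c\colon S^2\to S^2\vee S^2$ and $H_2(c)$ is the diagonal, functoriality gives $d^{2,E_1+E_2}_{2,2}=d^{2,E_1}_{2,2}+d^{2,E_2}_{2,2}$ directly.  Your clutching formula $d^2(\alpha)=\Gamma_*([S^1]\times\alpha)$ makes the homomorphism property equally transparent, at the cost of invoking (or verifying) that identification of the Wang differential.

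The more substantial difference is in the additivity of~$\Phi_E$.  The paper avoids Chern--Weil forms and cycle gluing altogether: it works with the induced map $\tilde c\colon E_1+E_2\to E_{1,2}$, applies the projection formula $\tilde c_*(\tilde c^*p_1\cap z)=p_1\cap\tilde c_*(z)$, uses $\tilde c_*(z)=\iota_{1*}(z)+\iota_{2*}(z)$ on the relevant filtration quotient, and reads off additivity modulo~$m$ from naturality of $p_1$ under the inclusions $\iota_j$.  This sidesteps exactly the ``homological bookkeeping'' you flag as the trickiest point: arranging that the boundary cycles in the collar match, that the correcting chain (which can be taken of the form $S^1\times\eta$ with $\eta$ a $3$-chain in $M$) pairs to zero with a $4$-form pulled back from~$M$, and that the glued class still lifts~$\alpha$.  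All of this can be carried out as you outline, but the paper's cap-product argument is the cleaner route and leaves nothing to arrange.
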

\begin{proof}
    Pick a bundle $E \rightarrow S^2$ whose classifying map is contained in $\mathcal{S}$ and consider the associated (homological) Leray-Serre spectral sequence
    \begin{equation*}
        E^2_{p,q} = H_p(S^2;H_q(M;\Z)) \Longrightarrow H_{p+q}(E;\Z).
    \end{equation*}
    As this spectral sequence converges to the homology of the total space, there is an ascending filtration $F_{p,q} := F_pH_{p+q}(E)$ of $H_{p+q}(E)$ that satisfies $F_{-1,p+q+1} = \{0\}$, $F_{p+q,0} = H_{p+q}(E)$, and $F_{p,q}/F_{p-1,q+1} \cong E^\infty_{p,q}$.
    We therefore have the following short exact sequence
    \begin{equation}\label{eq: extension problem}
        \xymatrix{ 0 \ar[r] & F_{0,4} \ar[r] \ar@{=}[d] & F_{2,2} \ar@{=}[d] \ar[r] & E^\infty_{2,2} \ar[r] \ar@{=}[d] & 0 \\
        & \mathrm{im}(H_4(\mathrm{incl})) & H_4(E;\Z), & \mathrm{ker} d_{2,2}^2 }
    \end{equation}
    where $\mathrm{incl} \colon M \rightarrow E$ is the fibre inclusion.
    We deduce that the fibre inclusion is a monomorphism between the third homology groups if and only if $d^2_{2,2} = 0$.

    Recall that the addition $[f_{E_1}] + [f_{E_2}]$ represents the bundle $E_1 + E_2 := c^\ast E_{1,2} \rightarrow S^2$ where $E_{1,2} \rightarrow S^2  \vee S^2$ is the unique bundle that restricts to $E_j$ on the $j$-th sphere, and $c \colon S^2 \rightarrow S^2 \vee S^2$ is the map that collapses the equator of $S^2$ to the basepoint.

    Let $A$ be an abelian group. The map $c$ induces the identity on the level of $H_0(\placeholder;A)$ and, under the isomorphism $H_2(S^2;A)^{\oplus 2} \xrightarrow{\cong} H_2(S^2 \vee S^2;A)$ that comes from the canonical inclusions $\iota_j \colon S^2 \rightarrow S^2 \vee S^2$, the homomorphism $H_2(c)$ corresponds to the diagonal map.

    With these observations, naturality of the Leray-Serre spectral sequence with respect to fibre maps implies that $d^{2,E_1 + E_2}_{2,2} = d_{2,2}^{2,E_1} + d_{2,2}^{2,E_2}$.
    Thus, we conclude that if two of the three bundles $E_1$, $E_2$ and $E_1 + E_2$ are classified by elements of $\mathcal{S}$, then so is the third.
    Since the constant map classifies the product $M \times S^2$, the zero element is contained in $\mathcal{S}$, hence it is a subgroup.

    \vspace{6pt}
  
    To prove the stament about the Pontryagin classes, we argue as follows.
    Bundles, whose classifying maps lie in $\mathcal{S}$, satisfy $d^2_{2,2} = 0$, so the short exact sequence (\ref{eq: extension problem}) becomes
    \begin{equation*}
         \xymatrix{ 0 \ar[r] & \mathrm{im}(H_4(\mathrm{incl})) \ar[r]  & H_4(E;\Z) \ar[r] & H_2\bigl(S^2;H_2(M;\Z)\bigr) \ar[r] & 0.}
    \end{equation*}
    By assumption, the restriction of $p_1(T^{\vertical} E)$ to $M$ is divisible by $m$, so the composition
    \begin{equation*}
        p_1(T^\vertical E) \cap \placeholder \colon H_4(E;\Z) \rightarrow \Z \rightarrow \Z_m
    \end{equation*}
    descends to a homomorphism $H_2\bigl(S^2;H_2(M;\Z)\bigr) \cong H_2(M;\Z) \rightarrow \Z_m$.

    To verify additivity, we derive for all $z \in H_2(S^2;H_2(M;\Z))$ the identity 
    \begin{align*}
       c_\ast \left( p_1\bigl(T^\vertical (E_1 + E_2)\bigr) \cap z \right) &= p_1(T^\vertical E_{1,2}) \cap c_\ast(z) \\
       &= p_1(T^\vertical E_{1,2}) \cap \bigl({\iota_1}_\ast(z) + {\iota_2}_\ast(z) \bigr) \\
       &= {\iota_1}_\ast \left( \iota_1^\ast p_1(T^\vertical E_{1,2}) \cap z \right) + {\iota_2}_\ast \left( \iota_2^\ast p_1(T^\vertical E_{1,2}) \cap z \right) \\
       &= {\iota_1}_\ast \bigl(p_1(T^\vertical E_1) \cap z\bigr) + {\iota_2}_\ast \bigl(p_1(T^\vertical E_2) \cap z\bigr).
     \end{align*}
     Since $H_2(c), H_2(\iota_1)$, and $H_2(\iota_2)$ are injective, additivity follows, and the claim is proved.
\end{proof}

With all these preparations, we are finally in the position to prove the following generalisation of Theorem \ref{Main Theorem - Example} and Theorem \ref{Main Theorem - pi_2-HMS}.

\begin{theorem}\label{Theorem - Lower Bound Elements Moduli}
    Let $M$ be a manifold obtained by a generalised Kummer constructions from a flat orbifolds satisfying Assumption \ref{Conditions - GroupsAction}. 
    Let $\mathsf{A}$ be the set of path components of the singularity set of this orbifold for which $F_a = \{1\}$ hold.
    Then $\pi_2(\hModuli{M})$, and hence $\pi_2(\Moduli{M})$ as well, contains at least $3^{|\mathsf{A}|}$ elements.
\end{theorem}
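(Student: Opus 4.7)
My plan is to exhibit at least $3^{|\mathsf{A}|}$ elements of $\pi_2(\hModuli{M})$ using the bundles $E_{M,\mathsf{B}}$ and to distinguish them via a mod-$3$ Pontryagin number. The conclusion for $\pi_2(\Moduli{M})$ then follows at once: $T^7/\Gamma$ has finite fundamental group by Condition \ref{Conditions - GroupsAction}(1), so by Lemma \ref{Lemma - Fundamental Group of Resoultion} the same holds for $M$, and Corollary \ref{Theorem - Comparison Theorem Detailed} gives that $\pi_2(p)\colon \pi_2(\hModuli{M})\to\pi_2(\Moduli{M})$ is injective.

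For each subset $\mathsf{B}\subseteq \mathsf{A}$, Proposition \ref{Prop - Classifying Space} associates to the pair $(E_{M,\mathsf{B}},\tilde{\bm\varphi}^t)$ from Theorem \ref{Theorem - Fibre torsion-free on Joyce Bundle} a class $[E_{M,\mathsf{B}}]\in\pi_2(\hModuli{M})$, whose image under the forgetful map $\pi_2(\hModuli{M})\to\pi_2(B\Diff(M)_0)$ classifies the underlying $M$-bundle and, by Lemma \ref{Lemma - Fibre Inclusion Third Homology}, lies in the subgroup $\mathcal{S}$. Combined with Theorem \ref{Theorem - DivPontrClass} and Proposition \ref{prop - Cohomology Invariant} applied with $m=3$, this yields a group homomorphism
\[
\Phi\colon \pi_2(\hModuli{M})\longrightarrow \mathcal{S}\longrightarrow \Hom(H_2(M;\Z),\Z_3).
\]

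For each $a\in\mathsf{A}$, let $[C_a]\in H_2(M;\Z)$ denote the class of the core $\CP^1$ arising from the resolution of the $a$-th singularity of $T^7/\Gamma$. To compute $\Phi([E_{M,\{a'\}}])([C_a])$, I would unpack (\ref{eq - inclusion twisted core spheres}) and (\ref{eq - inclusion untwisted core spheres}) to see that the $4$-submanifold $\iota_a(\CP^1\times\CP^1)\subset E_{M,\{a'\}}$ projects to the base $\CP^1$ with degree one via its second factor and meets each fibre of $E_{M,\{a'\}}\to\CP^1$ in a core sphere representing $[C_a]$. Under the Leray-Serre edge map $H_4(E_{M,\{a'\}};\Z)\to E^\infty_{2,2}\cong H_2(M;\Z)$, the class $\iota_{a\ast}[\CP^1\times\CP^1]$ therefore lifts $[C_a]$, and Lemma \ref{Lemma - Integral along twisted core spheres} gives
\[
\Phi([E_{M,\{a'\}}])([C_a])\;\equiv\;\bigl\langle p_1(T^\vertical E_{M,\{a'\}}),\,\iota_{a\ast}[\CP^1\times\CP^1]\bigr\rangle\;\equiv\;-8\,\delta_{a,a'}\;\equiv\;\delta_{a,a'}\pmod 3.
\]
The matrix $(\Phi([E_{M,\{a'\}}])([C_a]))_{a,a'\in\mathsf{A}}$ is therefore the identity over $\Z_3$, so the $|\mathsf{A}|$ homomorphisms $\Phi([E_{M,\{a\}}])$ are $\Z_3$-linearly independent. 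Consequently, the subgroup of $\pi_2(\hModuli{M})$ generated by the classes $[E_{M,\{a\}}]_{a\in\mathsf{A}}$ surjects onto a free $\Z_3$-module of rank $|\mathsf{A}|$, which contains exactly $3^{|\mathsf{A}|}$ elements.

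The main obstacle I anticipate is the Leray-Serre identification just used: the bundle $E_{M,\{a'\}}$ mixes the twisted Eguchi-Hanson family at $a'$ with untwisted trivial families at the other $a\in\mathsf{A}\setminus\{a'\}$, so one must verify in each of the two cases that $\iota_a$ is, up to fibrewise homotopy, a section of $E_{M,\{a'\}}\to\CP^1$ whose fibre value is $[C_a]\in H_2(M;\Z)$, so that $\iota_{a\ast}[\CP^1\times\CP^1]$ represents $[S^2]\otimes[C_a]\in E^2_{2,2}$ regardless of whether the resolution at $a$ is twisted or untwisted.
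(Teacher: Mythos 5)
Your proposal is correct and follows essentially the same route as the paper's proof: both combine Lemma \ref{Lemma - Integral along twisted core spheres}, Theorem \ref{Theorem - DivPontrClass}, Lemma \ref{Lemma - Fibre Inclusion Third Homology}, Proposition \ref{prop - Cohomology Invariant}, Theorem \ref{Theorem - Fibre torsion-free on Joyce Bundle}, and the Orbit Comparison Theorem corollaries to produce the surjection onto $\Hom(\Z^{|\mathsf{A}|},\Z_3)$. The Leray--Serre edge-map identification you flag as the remaining delicate point is precisely what the paper's Lemma \ref{Lemma - Fibre Inclusion Third Homology} (via the Mayer--Vietoris diagram chase) and the short exact sequence in Proposition \ref{prop - Cohomology Invariant} are designed to supply, so no genuinely new argument is needed there.
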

\begin{proof}
    Let $\mathsf{B} \subseteq \mathsf{A}$ be a subset.
    For each $b \in \mathsf{A}$, we have an inclusion $\iota_b$ as constructed in (\ref{eq - inclusion twisted core spheres}) or (\ref{eq - inclusion untwisted core spheres}).
    By Lemma \ref{Lemma - Integral along twisted core spheres} and Theorem \ref{Theorem - DivPontrClass} the first Pontryagin class of the vertical tangent bundle yields
    \begin{equation*}
        p_1(T^\vertical E_{M,\mathsf{B}}) \cap {\iota_b}_\ast ([\CP^1\times \CP^1]) = \langle p_1(T^\vertical E_{M,\mathsf{B}}) \, ; \, {\iota_b}_\ast[\CP^1\times \CP^1] \rangle = \begin{cases}
            - 8, & \text{ if } b\in B, \\
            0, & \text{ if } b \notin B.
        \end{cases}
    \end{equation*}
   Since we know by Proposition \ref{prop - Cohomology Invariant} that the assignment $E \mapsto p_1(T^\vertical E_M) \cap \placeholder$ gives rise to a homomorphism and since the images of $\CP^1 \times \CP^1$ under the embeddings $\iota_b$ form a free subgroup of rank $|\mathsf{A}|$ in $H_4(M;\Z)$, see Lemma \ref{Lemma - Fibre Inclusion Third Homology}, we deduce that the homomorphism
   \begin{equation*}
       \xymatrix{ \pi_2(B\Diff(M)_0) \supseteq \mathcal{S} \ar[r] & \mathrm{Hom}(H_2(M;\Z),\Z_3) \ar[rr]^-{\mathrm{restriction}} && \mathrm{Hom}(\Z^{|\mathsf{A}|};\Z_3)  }
   \end{equation*}
   is surjective.

   All the bundles $E_{M,\mathsf{B}}$ we used carry a fibrewise torsion-free $\GTwo$-structure on them, so we find at least $3^{|\mathsf{A}|}$ many elements inside $\pi_2(\hModuli{M})$.
   By Corollary \ref{Main Corollary - Coconnectivity}, the non-trivial elements we have found remain non-trivial in the moduli space, so $\pi_2(\Moduli{M})$  has at least $3^{|\mathsf{A}|}$ elements. 
\end{proof}

Theorem \ref{Theorem - Lower Bound Elements Moduli} can be applied to all examples in §3.2 of \cite{Joyce1996CompactG2II} with the exception of item 5 and 6 in the table of example 5 for their corresponding orbifolds do not satisfy condition (3) of our Conditions \ref{Conditions - GroupsAction}. 
In table (\ref{fig - Table Examples}) we list these examples with a lower bound of elements in the second homotopy group of their moduli space. 

\begin{figure}[h]
    \begin{tabular}{ |p{3cm}||p{1.2cm}|p{1.4cm}|p{1.2cm}|p{4cm}|  }
        \hline
      \multicolumn{5}{|c|}{Examples of Generalised Kummer Constructions from \cite{Joyce1996CompactG2II}} \\
        \hline
 Example Number & $\pi_1(M)$  & $b^3(M)$ & $|\mathsf{A}|$ & $|\pi_2(\Moduli{M})|$ \\
 \hline
 Example 03   & $\{1\}$   & $43$ & $12$ & $\geq 531\,441 $ \\
 \hline
 Example 04   & $\{1\}$    & $39$ -- $47$ & $8$ & $\geq 6\,561$ \\
 \hline
 Example 05 (ii)   & $\Z_2$    & $27$ -- $35$ & $4$ & $\geq 81$ \\
 \hline
 Example 05 (iii)   & $\Z_2$    & $21$ -- $29$ & $2$ & $\geq 9$ \\
 \hline
  Example 05 (iv)   & $\Z_2^2$    & $14$ -- $22$ & $1$ & $\geq 3$ \\
 \hline
  Example 06   & $\{1\}$    & $27$ -- $35$ & $2$ & $\geq 9$ \\
 \hline
  Example 07   & $\{1\}$    & $13$ & $2$ & $\geq 9$ \\
 \hline
  Example 08   & $\{1\}$    & $11$ & $2$ & $\geq 9$ \\
 \hline
  Example 09   & $\{1\}$    & $36$ & $10$ & $\geq 59\, 049  $ \\
 \hline
  Example 10   & $\{1\}$    & $21$ & $5$ & $\geq 243 $ \\
 \hline
  Example 11   & $\{1\}$    & $17$ & $4$ & $\geq 81 $ \\
 \hline
 Example 12   & $\Z_2$    & $11$ & $2$ & $\geq 9$ \\
 \hline
 Example 13   & $\{1\}$    & $10$ & $2$ & $\geq 9$ \\
 \hline
 Example 14   & $\{1\}$    & $10$ & $2$ & $\geq 9$ \\
 \hline
\end{tabular}
\caption{List of Joyce's Examples presented in §3.2 in \cite{Joyce1996CompactG2II} together with a lower bound of elements in $\pi_2$ of their moduli spaces. Here, $|\mathsf{A}|$ is the number of components with $F = \{1\}$.}
\label{fig - Table Examples}
\end{figure}

\appendix 
\section{The Action of the Diffeomorphim Group}\label{Appendix - Slice Theorem}

In this section, we provide a slight strengthening of Joyce's slice theorem for torsion-free $\GTwo$-structures. 
We will prove that every torsion-free $\GTwo$-structure $\varphi_0$ is contained in a slice $(S,\mathrm{Stab}_{\varphi_0}(\Diff_0(M))$.
The reason is that the formulation as it appears in \cite[Theorem 10.4.1]{Joyce2000SpecialHolonomy} does not imply (SL3) in Definition \ref{Definition - Slice}. 
In addition, our formulation is closer to the original slice theorem due to Ebin \cite{Ebin1970Slice}, from which Joyce's slice theorem was inspired.

We assume throughout that $M^7$ is a closed, spin manifold, so that $\GTwoStr(M)$ is not empty.
Let $\Diff(M)$ be the topolgical group of all diffeomorphism of $M$. 
It is an open subset of $\mathcal{C}^\infty(M,M)$, the space of all smooth self-maps of $M$ endowed with the smooth Fr\'echet topology, see \cite[Theorem II.1.7]{hirsch1997differential}.  
Furthermore, $\Diff(M)$ into a smooth Fr\'echet Lie group, which is tame in the sense of \cite{Hamilton1982Inverse}, see especially Example I.4.4.5 and I.4.4.6 therein. 
Let $\Diff(M)_0$ be the path component of the identity in $\Diff(M)$.
This subgroup is open, and therefore closed. 
Furthermore, it inherits the structure of a tame Lie group from $\Diff(M)$.

The group $\Diff(M)$ acts smoothly from the right on $\GTwoStr(M)$ via pullback $\pb \colon \Diff(M) \curvearrowright \GTwoStr(M)$ and its differential at $(F,\varphi)$ is given by
\begin{align*}
    D_{(F,\varphi)}\mathrm{pb} \colon \Gamma(M,F^\ast TM) \times \Omega^3(M) &\rightarrow \Omega^3(M), & (X,\theta) \mapsto F^\ast \mathcal{L}_{DF^{-1}(X)}(\theta),
\end{align*}
where $\mathcal{L} \colon \Gamma(M,TM) \times \Omega^3(M) \rightarrow \Omega^3(M)$ is the Lie-derivative. 
\begin{lemma}
    The pullback action $\pb$ is proper, that is, preimages of compact sets under the map
    \begin{equation*}
        \Diff(M) \times \GTwoStr(M) \xrightarrow{(\pb,\id)} \GTwoStr(M) \times \GTwoStr(M)
    \end{equation*}
    are again compact.
\end{lemma}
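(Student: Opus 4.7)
The plan is to show that if $(F_n, \varphi_n) \in \Diff(M) \times \GTwoStr(M)$ is a sequence with $\varphi_n \to \varphi$ and $F_n^* \varphi_n \to \psi$ in the smooth Fréchet topology, then a subsequence of $(F_n, \varphi_n)$ converges in $\Diff(M) \times \GTwoStr(M)$. This reduces the properness of $\pb$ to a convergence statement for a sequence of diffeomorphisms, which I settle via Arzelà--Ascoli, Myers--Steenrod, and an elliptic bootstrap.

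First, using the smoothness and naturality of $\varphi \mapsto g_\varphi$ from (\ref{eq - form to metric}), I would deduce that $g_{\varphi_n} \to g_\varphi$ and $g_{F_n^* \varphi_n} = F_n^* g_{\varphi_n} \to g_\psi$ smoothly on the closed manifold $M$. Hence each $F_n$ is a smooth isometry
\[
F_n \colon (M, g_{F_n^* \varphi_n}) \to (M, g_{\varphi_n}),
\]
between Riemannian metrics whose $C^k$-norms, injectivity radii, volumes, and curvature tensors are all uniformly controlled in $n$ for every $k$.

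Next, an Arzelà--Ascoli argument applied to this equi-Lipschitz family of isometries produces a subsequence converging in $C^0$ to a continuous map $F \colon M \to M$. Running the same argument on $F_n^{-1}$, which are isometries in the opposite direction between the same metrics, and intersecting subsequences, I obtain a continuous two-sided inverse for $F$, so $F$ is a homeomorphism. The Myers--Steenrod theorem then upgrades $F$ to a smooth isometry from $(M, g_\psi)$ to $(M, g_\varphi)$, and in particular $F \in \Diff(M)$.

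The main obstacle is upgrading $C^0$ convergence of $F_n$ to convergence in the smooth Fréchet topology on $C^\infty(M,M)$. For this I exploit that the isometry condition $F_n^* g_{\varphi_n} = g_{F_n^* \varphi_n}$, expressed in harmonic coordinates for $g_{\varphi_n}$ on the target or equivalently via the fact that a metric isometry is a harmonic map with isometric differential, is an elliptic system for $F_n$ whose coefficients and right-hand sides converge smoothly. A standard elliptic bootstrap, combined with the uniform geometric bounds above and the $C^0$ limit already established, yields uniform $C^k$ bounds on $F_n$ for every $k$; a diagonal subsequence then converges to $F$ in every $C^k$, hence in the smooth Fréchet topology. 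Passing to the limit in $F_n^* \varphi_n \to \psi$ finally gives $F^* \varphi = \psi$, which shows that the preimage of any compact set in $\GTwoStr(M)\times\GTwoStr(M)$ under $(\pb,\id)$ is sequentially compact and hence compact.
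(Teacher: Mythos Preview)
Your argument is essentially correct, but it takes a different and more laborious route than the paper's. The paper observes that the map $\varphi \mapsto g_\varphi$ intertwines the pullback actions on $\GTwoStr(M)$ and on $\Riem(M)$, and then reduces directly to Ebin's classical result that $\Diff(M)$ acts properly on $\Riem(M)$; a short ultrafilter argument finishes the proof in a few lines. What you do instead is re-derive the hard content of Ebin's theorem from scratch: the Arzel\`a--Ascoli step, Myers--Steenrod, and the elliptic bootstrap in harmonic coordinates are precisely the ingredients that go into proving properness of the action on metrics. Your approach buys self-containment and makes the analytic mechanism visible, but the bootstrap step (upgrading $C^0$ convergence of isometries between smoothly varying metrics to $C^\infty$ convergence) is only sketched and in a fully rigorous write-up would require some care with uniform harmonic coordinate charts. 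The paper's approach buys brevity by outsourcing exactly this work to the literature. Both are valid; the paper's is the cleaner packaging.
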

\begin{proof}
    Consider the following commutative diagram:
    \begin{equation*}
        \xymatrix{\Diff(M) \times \GTwoStr(M) \ar[rr]^{(\pb,\mathrm{id})} \ar[d]_{\id \times}^f && \GTwoStr(M) \times \GTwoStr(M) \ar[d]_{f \times}^{f} \\
        \Diff(M) \times \mathrm{Riem}(M) \ar[rr]^{(\pb,\mathrm{id})} && \mathrm{Riem}(M) \times \Riem(M),}
    \end{equation*}
    where the vertical arrows are the maps $\varphi \mapsto g_{\varphi}$.
    
   Let $K \subseteq \GTwoStr(M) \times \GTwoStr(M)$ be a compact subset.
   We will show that every ultra filter on the preimage converges.

   Let $\mathcal{F}$ be an ultra filter on $(\pb,\id)^{-1}(K)$.
   As image filters of ultra filters are ultra filters again, $\pb(\mathcal{F})$ is an ultra filter on $K$, and so it converges.
   Thus, the image filer $\mathrm{pr}_2\bigl(\pb(\mathcal{F})\bigr) = \mathrm{pr}_2(\mathcal{F})$ converges to some element.

   The diagram commutes, so $\id \times f$ maps $(\pb,\id)^{-1}(K)$ into $((\pb,\id)^{-1}\bigl( (f\times f)(K) \bigr)$.
   But the target is compact, because the pullback action of $\Diff(M)$ on the space of Riemannian metrics is proper, see for example \cite{Corro2021Slice} or the original \cite{Ebin1970Slice}.
   Thus $\mathrm{pr}_1(\mathcal{F}) = \mathrm{pr}_1\bigl( (\id \times f)(\mathcal{F}) \bigr)$ converges, too.

   By the universal property of the product topology, $\mathcal{F}$ converges if and only if the image filters $\mathrm{pr}_1(F)$ and $\mathrm{pr}_2(\mathcal{F})$ converges, so $(\pb,\id)^{-1}(K)$ is compact.
\end{proof}

As $\GTwoStr(M)$ is an open subset of the Fr\'echet space $\Omega^3(M)$, its tangent bundle $T\GTwoStr(M)$ is trivial.
It can be endowed with a family of Riemannian metrics $\FinMetric{\placeholder}{\placeholder}_k$, for each $k \in \N_0$, defined by
\begin{equation*}
    \FinMetric{\theta_1}{\theta_2}_{k,\varphi} = \sum_{\alpha=0}^k \int_M \langle \nabla^\alpha \theta_1, \nabla^\alpha \theta_2 \rangle_{g^{(\alpha,1)}_\varphi} \diff \vol_{g_\varphi},
\end{equation*}
where $g^{(\alpha,1)}_\varphi$ is the Riemannian metric of $T^\vee M^{\otimes \alpha} \otimes TM$ induced by $g_\varphi$ and 
\begin{equation*}
    \nabla^\alpha \colon TM \xrightarrow{\nabla^{\varphi}} T^\vee M \otimes TM \rightarrow \dots \rightarrow T^\vee M^{\otimes \alpha} \otimes TM
\end{equation*}
is the composition of the corresponding Levi-Civita connections.
It is evident, that $\FinMetric{\placeholder}{\placeholder}_k$ is diffeomorphism invariant, that is
\begin{equation*}
    \FinMetric{F^\ast \theta_1}{F^\ast \theta_2}_{k,F^\ast \varphi} = \FinMetric{\theta_1}{\theta_2}_{k,\varphi}
\end{equation*}
for all $F \in \Diff(M)$, but see \cite[{Chap. 9}]{Bleecker1981Gauge} for quite general proof of this fact.
The induced norms $|||\placeholder|||_k$ of these metrics form a Finsler structure in the sense of \cite{Subramaniam1984Slice}, which means that
\begin{itemize}
    \item[(i)] for each $\varphi \in \GTwoStr(M)$, the norms $|||\placeholder|||_{k,\varphi}$ generate the Fr\'echet structure of $\GTwoStr(M)$, and
    \item[(ii)] for all $C>1$, all $\varphi \in \GTwoStr(M)$, and all $k \in \N_0$, there is an open neighbourhood $U$ of $\varphi$ such that, for all $\psi \in U$, the norms satisfy
    \begin{equation*}
        C^{-1}|||\placeholder|||_{k,\psi} \leq |||\placeholder|||_{k,\varphi} \leq C |||\placeholder|||_{k,\psi}.
    \end{equation*}
\end{itemize}
The first point follows from the fact that all $|||\placeholder|||_\varphi$ are equivalent to "classical" Sobolev-norms on manifolds obtained by patching Sobolev-norms on $\R^7$ together with a partition of unity (yielding a norm independent of $\varphi$) and the Sobolev embedding theorem, see \cite[Chapter 7]{bleecker2013index} for both.
The second point follows from the fact that the integrand of the inner product depends smoothly on $\varphi$.

\begin{lemma}
    Set $\pb_{\varphi_0} = \pb(\placeholder,\varphi_0)$ for each $\varphi_0 \in \GTwoStr(M)$.
    The pullback action $\pb$ is an elliptic action, which means:
    \begin{itemize}
        \item[(i)] The infinitesimal action $D_{\id}\pb_{\varphi_0} \colon T_{\id} \Diff(M) = \Gamma(TM) \rightarrow T_{\varphi_0}\Diff(M)\varphi_0 \subseteq \Omega^3(M)$ is an overdetermined elliptic differential operator\footnote{That means, its principal symbol is injective away from the zero section.}.
        \item[(ii)] The map $\GTwoStr(M) \rightarrow \Gamma(\Hom(J^1TM,\Lambda^3T^\vee M))$, in which $J^1TM$ denotes the bundle of one-jets, given by $\varphi \mapsto D_{\id} \pb_{\varphi}$ is also a differential operator.
    \end{itemize}
\end{lemma}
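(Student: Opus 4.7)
The plan is to reduce both claims to essentially pointwise computations by invoking Cartan's magic formula. At $F = \id$ the infinitesimal action simplifies to
\begin{equation*}
    D_{\id}\pb_{\varphi_0}(X) \,=\, \mathcal{L}_X \varphi_0 \,=\, \iota_X \diff \varphi_0 + \diff(\iota_X \varphi_0).
\end{equation*}
The first summand is algebraic in $X$ (hence order zero), while the second factors as the bundle map $X \mapsto \iota_X \varphi_0$ composed with the exterior derivative. Consequently $D_\id \pb_{\varphi_0}$ is first order, with principal symbol
\begin{equation*}
    \sigma_\xi(X) \,=\, \xi \wedge \iota_X \varphi_0 \,\in\, \Lambda^3 T^\vee_p M, \qquad \xi \in T^\vee_p M,\ X \in T_p M.
\end{equation*}

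For part (i) it remains to show $\sigma_\xi$ is injective whenever $\xi \neq 0$. The map $(X,\xi) \mapsto \sigma_\xi(X)$ is pointwise $\GTwo$-equivariant, and since $\GTwo \subset \SOrth(7)$ acts transitively on unit covectors with stabiliser $\mathrm{SU}(3)$, I may (after rescaling $\xi$ and choosing a $\GTwo$-adapted coframe) assume $(T_pM, \varphi_0|_p) = (\R^7, \GTwostd)$ and $\xi = \diff x^1$. Under the induced $\mathrm{SU}(3)$-action $\R^7$ splits as $\R\partial_{x^1} \oplus \C^3$, so $\ker \sigma_{\diff x^1}$ must coincide with one of the four invariant subspaces $\{0\}$, $\R\partial_{x^1}$, $\C^3$, or $\R^7$. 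The values
\begin{equation*}
    \sigma_{\diff x^1}(\partial_{x^1}) \,=\, \diff\mathbf{x}^{123} - \diff\mathbf{x}^{145} - \diff\mathbf{x}^{167}, \qquad \sigma_{\diff x^1}(\partial_{x^2}) \,=\, -\diff\mathbf{x}^{146} + \diff\mathbf{x}^{157},
\end{equation*}
obtained directly from the explicit expressions for $\iota_{\partial_{x^\alpha}} \GTwostd$, rule out the three non-zero candidates and force $\ker \sigma_{\diff x^1} = \{0\}$.

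For part (ii), the same Cartan identity exhibits the bundle morphism $J^1 TM \to \Lambda^3 T^\vee M$ associated to $\varphi$ as a universal wedge-and-interior-product expression in $\varphi$ and $\diff\varphi$. Since $\varphi \mapsto \diff\varphi$ is itself a first-order linear operator, the assignment $\varphi \mapsto D_\id \pb_\varphi$ factors through the $1$-jet $j^1 \varphi$ with smooth dependence, which is precisely the defining property of a first-order differential operator.

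The only genuinely $\GTwo$-specific ingredient, and hence the main obstacle, is the symbol injectivity in (i); the analogous symbol map would fail to be injective for a generic non-degenerate $3$-form. The representation-theoretic reduction to the four candidate $\mathrm{SU}(3)$-invariant subspaces collapses this obstruction to two explicit computations and presents no real difficulty. Everything else is bookkeeping with Cartan's formula.
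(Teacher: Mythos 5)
Your proof is correct, and the overall strategy matches the paper's: Cartan's formula supplies both the first-order structure of $D_\id\pb_{\varphi_0}$ and the smooth dependence on $j^1\varphi$ required for (ii), and the symbol injectivity is reduced to the standard model by $\GTwo$-equivariance. Both your computations of $\sigma_{\diff x^1}(\partial_{x^1})$ and $\sigma_{\diff x^1}(\partial_{x^2})$ check out against the paper's convention for $\GTwostd$.

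Where you genuinely diverge is in closing the injectivity step. The paper normalises $\xi$ along the distinguished $\diff t$-direction, writes out $\xi \wedge \iota_X \varphi_0$ in full, and observes that its $\Lambda^{1,1}$, $\Lambda^{2,0}$, and $\Lambda^{0,2}$ components vanish only if the corresponding pieces of $X$ do. You instead note that $\ker\sigma_\xi$ is invariant under the $\SU(3)$-stabiliser of $\xi$, that the only invariant subspaces of $\R \oplus \C^3$ are $\{0\}$, $\R$, $\C^3$, $\R^7$, and that evaluating $\sigma_\xi$ on one vector in each of the two proper nonzero candidates rules them out. Your route trades an explicit decomposition of the full image for a representation-theoretic reduction to two basis evaluations; it is a bit cleaner and makes the role of irreducibility under $\SU(3)$ explicit, whereas the paper's version displays the kernel structure more transparently. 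Both are sound and of comparable length.
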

\begin{proof}
    Essentially by the definition of the Lie-derivative, we have $D_{\id} \pb_{\varphi}(X) = \mathcal{L}_X\varphi$.
    Hence, (ii) follows from Cartan's magic formula: $\mathcal{L}_X\varphi = \insertion{X} \diff \varphi + \diff \insertion{X} \varphi$.

    The commutator 
    \begin{align*}
        [D_{\id}\pb_{\varphi_0},f\cdot] = \mathcal{L}_{fX}\varphi_0 - f\mathcal{L}_X\varphi_0 = \diff f \wedge \insertion{X}\varphi_0 
    \end{align*}
    is $\mathcal{C}^\infty(M)$ linear, so $D_{\id}\pb_{\varphi_0}$ is a differential operator of order 1 with principal symbol
    \begin{equation*}
        \mathrm{symb}(D_{\id}\pb_{\varphi_0})(\xi)(\placeholder) = \xi \wedge \insertion{(\cdot)} \varphi_0 \colon TM \rightarrow \Lambda^3T^\vee M. 
    \end{equation*}
    
    Since the verification of injectivity if $\xi \neq 0 \in T^\vee_pM$ is a point-wise calculation, we may assume without loss of generality that $\varphi_0 = \diff t \wedge \omega^{\std} + \mathrm{Re}\, \Omega^{\std}$ is our model case on $\R^7 = \R \oplus \C^3$ and that $\xi = \xi_t \diff t$.
    If we decompose $X(p) = X_t(p) \partial_t + \left(X(p) - X_t(p)\partial_t\right) =: X_t(p)\partial_t + X(p)^\perp$, then $X(p)^\perp \in \C^3$ and a straight-forward computation yields that
    \begin{equation*}
        2\xi \wedge \insertion{X(p)}\varphi_0 =  \xi_t\diff t \wedge\Bigl( \underbrace{2X_t(p)\omega^{\std}}_{\in \Lambda^{1,1}\C^3} + \underbrace{\insertion{X(p)^\perp}\Omega^{\mathrm{std}}}_{\in \Lambda^{2,0}\C^3} +  \underbrace{\insertion{X(p)^\perp}\Bar{\Omega}^{\mathrm{std}}}_{\in \Lambda^{0,2}\C^3} \Bigr) \neq 0
    \end{equation*}
    whenever $\xi_t \neq 0$ and $X(p) \neq 0$.
\end{proof}

If $\varphi_0$ is a torsion-free $\GTwo$-structure, then the $L^2$-orthogonal complement of $T_{\varphi_0}\Diff(M)_0\varphi_0 = T_{\varphi_0}\Diff(M)\varphi_0$ inside $\Omega^3(M)$ is given by
\begin{equation*}
   \bigl(T_{\varphi_0}\Diff(M)_0\varphi_0\bigr)^\perp = \left\{ \theta \in \Omega^3(M) \, : \, \pi_7^2(\diff^{\ast_\varphi} \theta) = 0 \right\}, 
\end{equation*}
where $\pi_7^2 \colon \Omega^2(M) \rightarrow \Omega_7^2(M)$ is the projection induced by the projection $\pi_7^2 \colon \Lambda^2\R^7 \rightarrow \Lambda^2_7 := \{ \insertion{v}\varphi_{\std} \, : \, v \in \R^7 \}$ to the irreducible representation $\Lambda^2_7$ of $G_2$, see \cite[p.252]{Joyce2000SpecialHolonomy}.

Applying Subramaniam's slice theorem for elliptic actions, see \cite{Subramaniam1984Slice}, \cite{Subramniam1986Slice}, and \cite{Diez2019SliceTheorem} for a more general setup, to the pullback action $\mathsf{pb}\colon \Diff(M)_0 \curvearrowright \mathcal{G}_2(M)$ yields the following slight strengthening of Joyce's slice theorem\footnote{We can apply Subramaniam's slice theorem to the pullback action of the full diffeomorphism group to get a completely similar for the full diffeomorphism group.} \cite[Theorem 10.4.1]{Joyce2000SpecialHolonomy}:
\begin{theorem}[Slice Theorem]\label{Theorem - Slice Theorem}
    Let $\varphi_0 \in \tfGTwoStr(M)$ be a torsion-free $\GTwo$-structure on a closed manifold $M$
    and let $I_{\varphi_0}$ be the stabiliser of $\varphi_0$ inside $\Diff(M)_0$. Then inside
    $L_{\varphi_0} := \{ \varphi\in \GTwoStr(M) \, : \, \pi_7^2(\diff^{\ast_{\varphi_0}} \varphi)=0 \}$ exists a small open neighbourhood $S_{\varphi_0}$ such that the following statements hold true:
    \begin{itemize}
        \item[$\mathrm{(SL1)}$] $S_{\varphi_0}$ is $I_{\varphi_0}$-invariant: $S_{\varphi_0}\cdot I_{\varphi_0} \subseteq S_{\varphi_0}$
        \item[$\mathrm{(SL2)}$] If $F \in \Diff(M)_0$ satisfies $S_{\varphi_0} \cdot F \cap S_{\varphi_0} \neq \emptyset$, then $F \in I_{\varphi_0}$.
        \item[$\mathrm{(SL3)}$] $\Diff(M)_0 \rightarrow \Diff(M)_0/I_{\varphi_0}$ is a principal bundle and there is a local section $\chi\colon U \rightarrow \Diff(M)_0$ around the identity coset such that the map
        \begin{equation*}
            U \times S_{\varphi_0} \rightarrow \GTwoStr(M), \qquad ([F],\varphi) \mapsto \chi([F])^\ast \varphi 
        \end{equation*}
        is an open embedding.
    \end{itemize}
    In particular, the canonical inclusion induces an open embedding $S_{\varphi_0}/I_{\varphi_0} \rightarrow \GTwoStr(M)/\Diff(M)_0$.
\end{theorem}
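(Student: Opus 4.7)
The plan is to verify that the pullback action $\pb \colon \Diff(M)_0 \curvearrowright \GTwoStr(M)$ satisfies the hypotheses of Subramaniam's slice theorem for elliptic actions of tame Fr\'echet Lie groups on Fr\'echet manifolds, and then to identify the abstract slice produced by that theorem with the explicit candidate $L_{\varphi_0}$. The lemmas preceding the statement already supply everything we need: the action is proper, the infinitesimal action $D_{\id}\pb_{\varphi_0}$ is overdetermined elliptic and depends smoothly on the base point, and the Sobolev metrics $\FinMetric{\placeholder}{\placeholder}_k$ form a $\Diff(M)$-invariant Finsler structure on $\GTwoStr(M)$. Properness forces the stabiliser $I_{\varphi_0}$ to be a compact subgroup; since it also sits inside the isometry group of $(M,g_{\varphi_0})$, it is in fact a compact Lie subgroup of $\Diff(M)_0$, so $\Diff(M)_0 \to \Diff(M)_0/I_{\varphi_0}$ is a principal $I_{\varphi_0}$-bundle, which settles the first half of (SL3).

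Next I would compute the linear slice by identifying the $L^2$-orthogonal complement of the orbit direction. Since $\varphi_0$ is torsion-free, Cartan's magic formula gives $D_{\id}\pb_{\varphi_0}(X) = \mathcal{L}_X\varphi_0 = \diff(\iota_X\varphi_0)$, and the $\GTwo$-equivariant isomorphism $TM \cong \Lambda^2_7 T^\vee M$, $X \mapsto \iota_X\varphi_0$, reduces the computation of the formal $L^2$-adjoint to a projection identity in $\GTwo$-representation theory. The upshot, spelled out on p.~252 of Joyce's book, is that the kernel of the adjoint acting on $\Omega^3(M)$ is exactly $T_{\varphi_0}L_{\varphi_0}$; hence $L_{\varphi_0}$ is the right candidate for the slice.

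Subramaniam's theorem then produces an $I_{\varphi_0}$-invariant open neighbourhood $S_{\varphi_0}$ of $\varphi_0$ in $L_{\varphi_0}$, an open neighbourhood $U$ of the identity coset in $\Diff(M)_0/I_{\varphi_0}$, and a local section $\chi \colon U \to \Diff(M)_0$ for which the map $([F],\varphi) \mapsto \chi([F])^\ast\varphi$ is an open embedding; this gives (SL3). Property (SL1) is immediate: if $F \in I_{\varphi_0}$ then $F$ preserves $\varphi_0$, and hence $g_{\varphi_0}$, the codifferential $\diff^{\ast_{\varphi_0}}$, and the projector $\pi_7^2$, so $F^\ast$ preserves the defining equation of $L_{\varphi_0}$; the invariance of $S_{\varphi_0}$ itself is arranged by intersecting the initial neighbourhood with its (compactly many) $I_{\varphi_0}$-translates. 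Property (SL2) then follows by a standard shrinking argument using properness of the action: if it were to fail for arbitrarily small $S_{\varphi_0}$, properness would produce a limit diffeomorphism in $\Diff(M)_0\setminus I_{\varphi_0}$ that nonetheless fixes $\varphi_0$, a contradiction. The final assertion that $S_{\varphi_0}/I_{\varphi_0} \to \GTwoStr(M)/\Diff(M)_0$ is an open embedding is then a direct consequence of (SL1)--(SL3).

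The main obstacle is Fr\'echet-analytic: the usual inverse function theorem fails on Fr\'echet spaces, so proving that $([F],\varphi)\mapsto \chi([F])^\ast\varphi$ is a local diffeomorphism at $([\id],\varphi_0)$ requires a Nash--Moser type argument built on tame elliptic estimates for $D_{\id}\pb_\varphi$ (uniform in $\varphi$ near $\varphi_0$) together with the Finsler norm equivalence $C^{-1}|||\placeholder|||_{k,\psi} \le |||\placeholder|||_{k,\varphi} \le C|||\placeholder|||_{k,\psi}$ on a neighbourhood of $\varphi_0$. This is exactly the content of Subramaniam's theorem for elliptic actions; once it is invoked, the remaining verifications of (SL1)--(SL3) are formal.
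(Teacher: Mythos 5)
Your proposal is correct and takes essentially the same route as the paper: the paper's proof is precisely to verify (via the two preceding lemmas and the Finsler-structure discussion) that the pullback action is proper, elliptic, and admits an invariant Finsler structure, to identify the $L^2$-orthogonal complement of the orbit direction with $T_{\varphi_0}L_{\varphi_0}$ using the computation on p.~252 of Joyce's book, and then to invoke Subramaniam's slice theorem for elliptic tame actions. Your additional remarks on compactness of $I_{\varphi_0}$, on arranging (SL1) by averaging over the compact stabiliser, and on the Nash--Moser content hidden in Subramaniam's theorem are all accurate elaborations of what the paper leaves implicit.
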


Intersecting the 'slice' $S_{\varphi_0}$ from the previous theorem (or an appropriately chosen invariant, open subset of it) with $\tfGTwoStr(M)$, we deduce the following corollary for torsion-free $\GTwo$-structures.

\begin{cor}\label{Corollary - Slice Torsionfree}
    Let $\varphi_0 \in \tfGTwoStr(M)$ be a torsion-free $\GTwo$-structure on a closed manifold $M$
    and let $I_{\varphi_0}$ be the stabiliser of $\varphi_0$ inside $\Diff(M)_0$. Then inside
    $L_{\varphi_0}' := \tfGTwoStr(M) \cap \{ \varphi\in \GTwoStr(M) \, : \, \pi_7^2(\diff^{\ast_{\varphi_0}} \varphi)=0\}$ exists a small open neighbourhood $S_{\varphi_0}'$ such that the following statements hold true:
    \begin{itemize}
        \item[$\mathrm{(SL1)}$] $S_{\varphi_0}'$ is $I_{\varphi_0}$-invariant: $S_{\varphi_0}'\cdot I_{\varphi_0} \subseteq S_{\varphi_0}'$
        \item[$\mathrm{(SL2)}$] If $F \in \Diff(M)_0$ satisfies $S_{\varphi_0}' \cdot F \cap S_{\varphi_0}' \neq \emptyset$, then $F \in I_{\varphi_0}$.
        \item[$\mathrm{(SL3)}$] $\Diff(M)_0 \rightarrow \Diff(M)_0/I_{\varphi_0}$ is a principal bundle and there is a local section $\chi\colon U \rightarrow \Diff(M)_0$ around the identity coset such that the map
        \begin{equation*}
            U \times S_{\varphi_0}' \rightarrow \tfGTwoStr(M), \qquad ([F],\varphi) \mapsto \chi([F])^\ast \varphi 
        \end{equation*}
        is an open embedding.
    \end{itemize}
    In particular, the canonical inclusion induces an open embedding $S_{\varphi_0}'/I_{\varphi_0} \rightarrow \tfGTwoStr(M)/\Diff(M)_0$.
\end{cor}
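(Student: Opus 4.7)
The plan is simply to intersect the slice $S_{\varphi_0}$ produced by the Slice Theorem (Theorem \ref{Theorem - Slice Theorem}) with the subspace $\tfGTwoStr(M)$ and observe that each of (SL1)--(SL3) is inherited.
Define $S_{\varphi_0}' := S_{\varphi_0} \cap \tfGTwoStr(M)$. Since $L_{\varphi_0}' = L_{\varphi_0} \cap \tfGTwoStr(M)$, this is open in $L_{\varphi_0}'$ with its subspace topology, and contains $\varphi_0$ because $\varphi_0 \in S_{\varphi_0} \cap \tfGTwoStr(M)$.

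The key elementary ingredient is the $\Diff(M)$-invariance of torsion-freeness: for any $F \in \Diff(M)$ and any $\varphi \in \tfGTwoStr(M)$ one has $g_{F^\ast\varphi} = F^\ast g_\varphi$ and hence $\ast_{F^\ast\varphi} = F^\ast \ast_\varphi (F^{-1})^\ast$; thus $\diff(F^\ast\varphi) = F^\ast\diff\varphi = 0$ and $\diff^{\ast_{F^\ast\varphi}} F^\ast\varphi = F^\ast \diff^{\ast_\varphi}\varphi = 0$. Consequently, the pullback action restricts to $\tfGTwoStr(M)$ and preserves it set-wise.

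Using this, axiom (SL1) for $S_{\varphi_0}'$ is immediate: it is the intersection of two $I_{\varphi_0}$-invariant sets, namely $S_{\varphi_0}$ (by (SL1) of Theorem \ref{Theorem - Slice Theorem}) and $\tfGTwoStr(M)$. Axiom (SL2) is inherited likewise: if $S_{\varphi_0}' \cdot F \cap S_{\varphi_0}' \neq \emptyset$, then a fortiori $S_{\varphi_0}\cdot F \cap S_{\varphi_0}\neq\emptyset$, whence $F \in I_{\varphi_0}$ by (SL2) of Theorem \ref{Theorem - Slice Theorem}. For (SL3), I would take the same neighbourhood $U$ of the identity coset in $\Diff(M)_0/I_{\varphi_0}$ and the same local section $\chi\colon U \rightarrow \Diff(M)_0$ provided by Theorem \ref{Theorem - Slice Theorem}, and consider the restriction
\begin{equation*}
   \Psi'\colon U \times S_{\varphi_0}' \longrightarrow \tfGTwoStr(M), \qquad ([F],\varphi)\mapsto \chi([F])^\ast\varphi,
\end{equation*}
of the open embedding $\Psi\colon U\times S_{\varphi_0}\rightarrow \GTwoStr(M)$ from Theorem \ref{Theorem - Slice Theorem}. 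This map indeed lands in $\tfGTwoStr(M)$ by invariance of torsion-freeness, and it is injective as the restriction of an injective map.

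The only point requiring a moment's thought is to verify that $\Psi'$ is an \emph{open} embedding into $\tfGTwoStr(M)$, i.e.\ that its image is open in the subspace topology. I would argue that
\begin{equation*}
   \Psi(U\times S_{\varphi_0}') = \Psi(U\times S_{\varphi_0}) \cap \tfGTwoStr(M).
\end{equation*}
The inclusion $\subseteq$ is clear; for $\supseteq$, suppose $\chi([F])^\ast\varphi$ is torsion-free with $\varphi\in S_{\varphi_0}$. Since $\chi([F])$ is a diffeomorphism, applying $(\chi([F])^{-1})^\ast$ and using invariance of torsion-freeness shows $\varphi \in \tfGTwoStr(M)$, i.e.\ $\varphi\in S_{\varphi_0}'$. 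Hence the image is open in $\tfGTwoStr(M)$ and $\Psi'$ is a homeomorphism onto this image, as required. The final assertion about $S_{\varphi_0}'/I_{\varphi_0} \hookrightarrow \tfGTwoStr(M)/\Diff(M)_0$ then follows from (SL1)--(SL3) exactly as noted in Remark \ref{Remark - slice consequence}. I do not foresee a serious obstacle: the entire argument is a routine restriction of the slice theorem, and the only subtlety is the back-and-forth verification that torsion-freeness can be transported along $\chi([F])^\ast$ and its inverse.
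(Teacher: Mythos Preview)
Your proposal is correct and follows exactly the approach indicated in the paper, which simply says to intersect the slice $S_{\varphi_0}$ from Theorem~\ref{Theorem - Slice Theorem} (or an appropriately chosen invariant open subset) with $\tfGTwoStr(M)$. You have carefully spelled out the verification of (SL1)--(SL3), including the key observation for (SL3) that $\Psi(U\times S_{\varphi_0}') = \Psi(U\times S_{\varphi_0})\cap\tfGTwoStr(M)$ via the $\Diff(M)$-invariance of torsion-freeness in both directions, which the paper leaves implicit.
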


%\input{PointedModuli}
%\input{KOInvariant}

%\newpage 
%\input{JoyceResolution}

\printbibliography

\end{document}

D.~Crowley is partially supported by the Australian Research Council Discovery Project DP DP220102163.
S.~Goettte is partially funded by the Simons Collaboration "Special Holonomy in Geometry, Analysis and Physics", Grant Number 488617
T.~Hertl acknowledges support by the Simons Collaboration "Special Holonomy in Geometry, Analysis and Physics", Grant Number 488617, and by the Australian Research Council Discovery Project DP DP220102163.